\documentclass[twoside]{amsart}

\usepackage{a4}
\usepackage[toc,page]{appendix}
\usepackage{amsmath}
\usepackage{amssymb}
\usepackage{amscd}
\usepackage{amsfonts}
\usepackage{amsthm}
\usepackage{array}
\usepackage[english]{babel}
\usepackage{booktabs}
\usepackage{bbm}
\usepackage{bm}
\usepackage{blkarray}
\usepackage{bmpsize}
\usepackage{cancel}
\usepackage[font=footnotesize]{caption}
\usepackage{color}
\usepackage{enumerate}
\usepackage{enumitem}
\usepackage{extarrows}
\usepackage{graphicx}
\usepackage[driverfallback=dvipdfm, colorlinks=true, citecolor=magenta]{hyperref}
\usepackage[capitalise]{cleveref}
\usepackage{indentfirst}
\usepackage[modulo,right,pagewise]{lineno}
\usepackage[scr=boondox]{mathalfa}
\usepackage{mathdots}
\usepackage{mathrsfs}
\usepackage{mathtools}
\usepackage{mathabx}
\usepackage{multirow}
\usepackage{siunitx}
\usepackage{thmtools}
\usepackage{tikz}
\usepackage{tikz-cd}
\usetikzlibrary{fadings}
\usetikzlibrary{patterns}
\usetikzlibrary{shadows.blur}
\usetikzlibrary{shapes}

\usepackage{verbatim}

\usepackage{tabularx}

\newtheorem{theorem}{Theorem}[section]
\newtheorem{theoremalpha}{Theorem}

\newtheorem{proposition}[theorem]{Proposition}
\newtheorem{corollary}[theorem]{Corollary}
\newtheorem{corollaryalpha}[theoremalpha]{Corollary}
\newtheorem{lemma}[theorem]{Lemma}
\theoremstyle{definition}

\newtheorem{definition}[theorem]{Definition}

\newtheorem{remark}[theorem]{Remark}

\numberwithin{equation}{section}
\numberwithin{theorem}{section}

\DeclareMathOperator{\diam}{diam}

\DeclareMathOperator{\SL}{\mathsf{SL}}

\DeclareMathOperator{\PSL}{\mathsf{PSL}}

\DeclareMathOperator{\Ad}{Ad}

\DeclareMathOperator{\GM}{\mathrm{GM}}
\DeclareMathOperator{\Diam}{\lozenge}

\DeclareMathOperator{\Bc}{\mathcal{B}}
\DeclareMathOperator{\Cc}{\mathcal{C}}
\DeclareMathOperator{\Dc}{\mathcal{D}}

\DeclareMathOperator{\Fc}{\mathcal{F}}
\DeclareMathOperator{\Gc}{\mathcal{G}}

\DeclareMathOperator{\Lc}{\mathcal{L}}

\DeclareMathOperator{\Pc}{\mathcal{P}}
\DeclareMathOperator{\Qc}{\mathcal{Q}}

\DeclareMathOperator{\Nb}{\mathbb{N}}

\DeclareMathOperator{\Rb}{\mathbb{R}}

\newcommand{\de}{\mathrm{d}}
\newcommand{\abs}[1]{\left|#1\right|}

\newcommand{\norm}[1]{\left\|#1\right\|}
\newcommand{\wt}[1]{\widetilde{#1}}
\newcommand{\wh}[1]{\widehat{#1}}

\newcommand{\msf}[1]{\mathsf{#1}}

\newcommand{\vis}[1]{{#1}}

\newcommand{\Hy}{\mathbb{H}}

\newcommand{\N}{\mathbb{N}}

\newcommand{\R}{\mathbb{R}}

\newcommand{\fraka}{\mathfrak{a}}

\newcommand{\frakm}{\mathfrak{m}}
\newcommand{\frakn}{\mathfrak{n}}

\newcommand{\sfc}{\mathsf{c}}
\newcommand{\sfL}{\mathsf{L}}
\newcommand{\sft}{\mathsf{t}}
\newcommand{\vv}{\mathbf{v}}
\newcommand{\vw}{\mathbf{w}}
\newcommand{\ve}{\mathbf{e}}

\newcommand{\al}{\alpha}

\newcommand{\gam}{\gamma}

\newcommand{\del}{\delta}
\newcommand{\ep}{\epsilon}
\renewcommand{\epsilon}{\varepsilon}

\newcommand{\thet}{\theta}

\newcommand{\Lam}{\Lambda}

\newcommand{\lam}{\lambda}

\newcommand{\sig}{\sigma}

\newcommand{\p}{\psi}

\newcommand{\Om}{\Omega}

\newcommand{\om}{\omega}

\newcommand{\bs}{\backslash}

\newcommand{\ra}{\rightarrow}

\newcommand{\ov}[1]{\overline{#1}}

\begin{document}

\title[Manhattan manifolds and exact dimensionality]{Regularity of Manhattan manifolds and exact dimensionality for relatively Anosov groups}

\author{Eduardo Reyes}
\address{Facultad de Matem\'aticas, Pontificia Universidad Cat\'olica de Chile (PUC)}
\email{eduardoreyes@uc.cl}
\thanks{}

\author{Tianqi Wang}
\address{Department of Mathematics, Yale University}
\email{tq.wang@yale.edu}
\thanks{}

\date{\today}

\begin{abstract}
    We establish several results about Patterson--Sullivan measures for relatively Anosov groups. First, we prove that these measures are exact dimensional with respect to visual metrics induced by Gromov models in the Groves--Manning quasi-isometry class. Under the additional assumption that the group is relatively Morse, we show that the associated scalar Cartan metric is Gromov hyperbolic and that the corresponding boundary premetric is a visual metric to which the exact-dimensionality theorem applies. Second, we prove that their Manhattan manifolds are $C^1$-regular, from which we deduce that the growth indicator is $C^1$-regular and strictly concave on the interior of the limit cone. This extends the case of Anosov representations by Kim--Oh--Wang.
    Our methods are dynamical, and we exploit the fact due to Kim--Oh and Blayac--Canary--Zhu--Zimmer that Bowen--Margulis--Sullivan measures for relatively Anosov groups are finite and mixing.
\end{abstract}

\maketitle

\section{Introduction}

Patterson--Sullivan measures were first introduced by Patterson \cite{patterson1976limit} for Fuchsian groups and later by Sullivan \cite{sullivan1979density} for discrete subgroups of isometries of $\Hy^n$, providing a link between orbit growth, the large-scale geometry of the group, and the dynamics of the corresponding geodesic flows. Among the many generalizations of this theory, Albuquerque \cite{albuquerque1999patterson} and Quint \cite{quint2002mesures} extended this notion to discrete subgroups of semisimple Lie groups of higher rank. In this setting, this paper studies Patterson--Sullivan measures for \emph{relatively Anosov groups}, a prominent class of groups introduced by Kapovich--Leeb \cite{kapovich2018relativizing} and Zhu--Zimmer \cite{zhu2022relatively}, which can be viewed as a higher rank generalization of geometrically finite groups in rank $1$. 

We work in the following setting, for which we refer to \Cref{section: preliminaries} for more details. Let $\msf G$ be a connected semisimple real algebraic group. We fix a Cartan decomposition $\msf G=\msf K\exp(\fraka^+)\msf K$ and denote by $\mu:\msf G\to \fraka^+$ the Cartan projection. We fix a non-empty set $\theta$ of simple roots and denote by $\msf P_\theta< \msf G$ the associated standard parabolic subgroup, by $\Fc_\theta=\msf G/ \msf P_\theta$ the $\theta$-flag manifold, and by $\mu_\theta: \msf G\to \fraka_\theta^+$ the $\theta$-Cartan projection.

Let $\Gamma < \msf G$ be a subgroup that is $\theta$-Anosov relative to a collection $\Pc$ of subgroups, so that the pair $(\Gamma,\Pc)$ is relatively hyperbolic. We denote by $\Lc_\theta(\Gamma)\subset\fraka_\theta^+$ the $\theta$-limit cone of $\Gamma$ and by $\Lambda_\theta\subset \Fc_\theta$ the $\theta$-limit set of $\Gamma$. For $\psi\in \fraka_\theta^\ast$, a $(\Gamma,\psi)$-\emph{Patterson--Sullivan measure} is a Borel probability measure on $\Lam_\theta$ such that for all $\xi\in \Lam_\theta$ and $\gamma\in \Gamma$ we have
\[\frac{\de \gam_\ast \nu}{\de \nu}(\xi)=e^{\psi(\beta_\xi^\theta(e,\gamma))}~.\]
Here $\beta^\theta$ denotes the $\fraka_\theta$-valued Busemann function, see \Cref{section: Lie groups}. Suppose that $\psi$ is positive on $\Lc_\theta(\Gamma)\setminus\{0\}$ and its critical exponent  \[\delta_\psi(\Gamma)=\limsup_{T\to\infty} \frac{1}{T} \log \#\{\gamma\in\Gamma: \psi(\mu_\theta(\gamma))\leqslant T\}~\]
is $1$. In this case, Canary--Zhang--Zimmer \cite{canary2025patterson} proved existence and uniqueness of the $(\Gamma,\psi)$-Patterson--Sullivan probability measure, which we denote by $\nu_\psi$. The purpose of this paper is to study several properties of Patterson--Sullivan measures of relatively Anosov groups.

\subsection{Exact dimensionality}
Let $\Gamma<\msf G$ be a $\theta$-Anosov group relative to $\Pc$. Then the action of $\Gamma$ on the symmetric space $(\msf G/\msf K,\msf{d}_{\msf G/\msf K})$ is \emph{orbit-quasi-isometric} to the action of $\Gamma$ on a Groves--Manning cusped space $(X_{\GM},\msf{d}_{\GM})$ \cite{groves2008dehn}. That is, for any $o\in X_{\GM}$ and $o'\in \msf G/\msf K$ there are constants $C\geqslant 1$ and $c\geqslant 0$ such that for any $\gamma\in \Gamma$ we have \[C^{-1}\msf{d}_{\msf G/\msf K}(o', \gamma o')-c \leqslant \msf{d}_{\GM}(o,\gamma o) \leqslant C \msf{d}_{\msf G/\msf K}(o', \gamma o') + c~,\] see \cite[Theorem~1.7, Proposition~1.13]{zhu2022relatively}.
Indeed, our results apply to any Gromov model $(Y,\msf{d}_Y)$ for $(\Gamma,\Pc)$ that is $\Gamma$-equivariantly quasi-isometric to $(X_{\GM},\msf{d}_{\GM})$. 
We denote by $\vis{d}_Y$ a visual quasi-metric on $\partial Y$ associated to $\msf{d}_Y$. That is, \[\vis{d}_Y(\xi,\eta)= \begin{cases}e^{-(\xi,\eta)_{y_0}} & \text{if } \xi\neq \eta~,\\ 0 & \text{if } \xi=\eta~,\end{cases}\] where $(\cdot,\cdot)_{y_0}:\partial Y \times \partial Y\ra \R$ is an extension of the Gromov product based at $y_0\in Y$. For this quasi-metric we denote $B(\xi,r)=\{\eta\in \partial Y: \vis{d}_Y(\xi,\eta)<r\}$. We also identify the Bowditch boundary $\partial Y = \partial (\Gamma,\Pc)$ of $(\Gamma,\Pc)$ and the $\theta$-limit set $\Lambda_\theta$ of $\Gamma$. This allows us to regard any $(\Gamma,\psi)$-Patterson--Sullivan measure for $\psi\in \fraka_\theta^\ast$ as a measure on $\partial Y$.

Our first main result establishes the exact dimensionality of Patterson--Sullivan measures with respect to visual quasi-metrics arising from the above class of Gromov models.

\begin{theoremalpha}\label{theoremalpha: exact dimensionality}
    Let $\Gamma < \msf G$ be a $\theta$-Anosov group relative to $\Pc$, and let $(Y,\msf{d}_Y)$ be a Gromov model for $(\Gamma,\Pc)$ that is $\Gamma$-equivariantly quasi-isometric to a Groves--Manning cusped space. If $\psi\in \fraka_\theta^\ast$ is a $(\Gamma,\psi)$-proper linear form that has critical exponent $1$, then the $(\Gamma,\psi)$-Patterson--Sullivan measure $\nu_\psi$ is exact dimensional with respect to $\vis{d}_Y$. That is, there exists $h>0$ such that for $\nu_\psi$-almost every $\xi\in \partial Y$ we have
\[h=\lim_{r\to0}\frac{\log\nu_\psi(B(\xi,r))}{\log r}~.\]
In particular, $h$ equals the Hausdorff dimension of the measure $\nu_\psi$ with respect to $\vis{d}_Y$.   
\end{theoremalpha}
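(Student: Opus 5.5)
We will prove this with a Ledrappier-type dynamical argument. The dimension $h$ should be the ratio $h=\delta/\ell$, where $\delta=1$ is the $\psi$-entropy of the Bowen--Margulis--Sullivan measure and $\ell$ is an average ``drift'' of $\msf d_Y$ relative to $\psi\circ\mu_\theta$ along generic geodesics. We use the fact (due to Kim--Oh and Blayac--Canary--Zhu--Zimmer, as recalled in the abstract) that the BMS measure $m$ on the quotient $\Gamma\backslash(\Lambda^{(2)}_\theta\times\R)$ for the $\psi$-translation flow $\phi_t$ is finite and mixing, hence ergodic.

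\textbf{Step 1 (shadows).} First, we relate balls $B(\xi,r)$ in the visual quasi-metric to shadows. Fix $y_0\in Y$ and let $\xi\in\partial Y$. Take a quasi-geodesic ray $y_0\to\xi$ and points $y_t$ on it. By Gromov hyperbolicity, $B(\xi,e^{-t})$ is comparable, up to multiplicative constants in the radius, to the shadow $O_{R}(y_0,y_t)$. We will use shadows of $\Gamma$-orbit points: $O_R(y_0,\gamma y_0)$ when $y_t$ is in the thick part, and horoball-adapted shadows when $y_t$ enters a cusp region.

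\textbf{Step 2 (shadow lemma).} For $\gamma$ with $\gamma y_0$ near the ray we expect $\nu_\psi(O_R(y_0,\gamma y_0))\asymp e^{-\psi(\mu_\theta(\gamma))}$, the standard shadow lemma for relatively Anosov groups (Canary--Zhang--Zimmer). Near parabolic points there is a correction involving the growth of the peripheral subgroup. To handle this, we would rather parametrize by the flow. For $\nu_\psi$-a.e.\ $\xi$, pick $\eta$ with $(\eta,\xi)\in\Lambda^{(2)}$ and follow the flow line $\phi_s(\eta,\xi,0)$. Let $\gamma_s$ be the group element realizing the position at flow time $s$. Then $\psi$-Busemann time $s$ corresponds to $\psi(\mu_\theta(\gamma_s))\approx s$, and $\msf d_Y(y_0,\gamma_s y_0)=:\tau(s)$. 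The local measure of the ball of radius $e^{-\tau(s)}$ is then $e^{-s+o(s)}$, provided the shadow-lemma errors are subexponential along typical orbits.

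\textbf{Step 3 (ergodic averaging).} Write $\tau(s)=\int_0^s F(\phi_u x)\,du+O(1)$, where $F$ is a cocycle-derivative encoding the $\msf d_Y$-Busemann cocycle along the $\psi$-flow. Such a function exists since both are Hölder cocycles on the boundary: the $Y$-Busemann cocycle and $\psi\circ\beta^\theta$. We would make it a genuine function by a Livšic-type reparametrization, or by working directly with the two-cocycle skew product. Birkhoff's ergodic theorem with the finite ergodic measure $m$ gives $\tau(s)/s\to\ell:=\int F\,dm/m(\text{total})$. We need $F\in L^1(m)$, and this is the main obstacle: integrability in cusps. In a cusp excursion of depth $D$, $\msf d_Y$ grows like the horoball distance ($\sim 2\log$ of peripheral word length in Groves--Manning), while $\psi$ grows like $\psi(\mu_\theta(p))$ for parabolic $p$. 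Orbit-quasi-isometry gives that these are comparable up to multiplicative constants, so $F$ is bounded. We would check this via the uniform comparison $C^{-1}\msf d_{\msf G/\msf K}-c\le\msf d_{\GM}\le C\msf d_{\msf G/\msf K}+c$ together with $\psi(\mu_\theta)\asymp\msf d_{\msf G/\msf K}$ on $\Gamma$ (properness). Positivity $\ell>0$ follows from the same comparison.

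\textbf{Step 4 (conclusion).} We interpolate between the times $s$: $\tau$ is monotone up to bounded error, so for $r\in[e^{-\tau(s+1)},e^{-\tau(s)}]$ we get $\log\nu_\psi(B(\xi,r))/\log r\to 1/\ell=:h$. The delicate point is the shadow lemma error inside deep cusp excursions. Here we would use that for $m$-a.e.\ point the cusp depth at time $s$ is $o(s)$, by Birkhoff applied to the (integrable) depth function. This makes the parabolic corrections $e^{o(s)}$. Finally, the Hausdorff dimension statement is the standard consequence of exact dimensionality via the mass distribution principle and Frostman-type covering.
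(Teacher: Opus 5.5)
Your heuristic ($h$ equals the $\psi$-entropy divided by the $\msf d_Y$-drift along BMS-generic flow lines, with finiteness and mixing of $\frakm_\psi$ supplying ergodicity) is the right one. It matches the exponent $h=\lambda_\alpha/\lambda_\beta$ that the paper obtains. However, Step 3, which carries the whole argument, has a genuine gap.

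You posit a function $F$ on $\Omega_\psi$ with $\tau(s)=\int_0^s F(\phi_u x)\,du+O(1)$, on the grounds that ``both are H\"older cocycles on the boundary''. For an arbitrary Gromov model $(Y,\msf d_Y)$, the Busemann function of $\msf d_Y$ is neither H\"older nor an exact $\Gamma$-invariant cocycle, since $Y$ is only roughly geodesic and only quasi-isometric to $X_{\GM}$. It is additive and equivariant only up to a bounded error, and such errors accumulate linearly once you try to write the displacement as a Birkhoff sum. Even for $Y=X_{\GM}$, the inverse of the Kim--Oh time change is only an almost cocycle (\Cref{lemma: hat t almost cocycle}). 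Moreover, no Liv\v{s}ic-type theory is available on the non-compact space $\Omega_\psi$. Your claim that $F$ is bounded does not follow from orbit quasi-isometry either, since that only compares sums coarsely. So Birkhoff's theorem has nothing to act on, and the almost sure existence of the drift $\ell$ is not established. The integrability issue you single out is secondary.

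The missing idea is to replace additivity by subadditivity along a discretization. Take the first return map $\Phi$ of $\phi^1$ to a compact $K\subset\Omega_\psi$ with a precompact lift $\wh K$. This gives a group-valued cocycle $\msf c_n(v)$ with $\phi^{T_n(v)}(\wh v)\in\msf c_n(v)\wh K$.

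\begin{itemize}
\item The sequence $n\mapsto \msf d_Y(y_0,\msf c_n(v)y_0)$ is exactly subadditive, by the triangle inequality, the $\Gamma$-invariance of $\msf d_Y$, and the cocycle relation.
\item It is bounded by $A_0T_n(v)+B_0$, so it is integrable by Kac's lemma. Kingman's theorem, together with the ergodicity of $\Phi$, then gives a limit $\lambda_\beta>0$.
\item Meanwhile $\psi(\mu_\theta(\msf c_n(v)))=T_n(v)+O(1)$ by \Cref{theorem: the flow space for relatively Anosov}~(5), so Kac's lemma gives the $\psi$-rate.
\end{itemize}

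This also removes your cusp difficulties in Steps 2 and 4. The only shadows needed are the orbit-point shadows of $\msf c_n(v)$. For these, \Cref{lemma: the shadow lemma for relatively Anosov} holds with uniform constants and no parabolic correction. Your estimate $\psi(\mu_\theta(\gamma_s))\approx s$ is in any case valid up to bounded error only at such thick times; deep in a cusp it is off by roughly the depth. Interpolating between radii $e^{-\beta_n}$ and $e^{-\beta_{n-1}}$, where $\beta_n=\msf d_Y(y_0,\msf c_n(v)y_0)$, only requires $\beta_n/\beta_{n-1}\to 1$, which follows from linear growth. So you need neither an integrable cusp-depth function nor any control of $\tau$ inside cusp excursions.
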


In some situations, exact dimensionality follows from Ahlfors regularity of the measure, which implies that the relevant measure is equivalent to the Hausdorff measure of the space. This is the case of Patterson--Sullivan measures for symmetric linear forms associated to Anosov groups, as established by Dey--Kim--Oh \cite[Theorem~1.1]{dey2024ahlfors}. When $\psi$ is not symmetric, the Patterson--Sullivan measure is still $s$-Ahlfors regular for some $s\in (0,1)$, but is no longer comparable to a Hausdorff measure \cite[Theorem~1.3]{dey2024ahlfors}. In the presence of cusps, a comparison between Patterson--Sullivan measures and Hausdorff measures depends heavily on the growth behavior of the cusp subgroups. In the rank $1$ case, this is Sullivan's criterion \cite{sullivan.entropy}, which was recently extended to relative Morse groups by Kim--Oh \cite{kim-oh.globalshadow}. Our \Cref{theoremalpha: exact dimensionality} extends the exact dimensionality of all these previously known cases, and does not rely on Ahlfors regularity. We note that exact dimensionality has also been established for harmonic measures associated to random walks on groups. For actions on hyperbolic spaces, this is due to Tanaka \cite{tanaka.dimension}, whereas for random walks on $\SL(n,\Rb)$ this was proven by Ledrappier--Lessa \cite{ledrappier-lessa}.

\subsection{Regularity of Manhattan manifolds and strict concavity of growth indicators}

The second goal of the paper is to prove regularity of the Manhattan manifolds for relatively Anosov groups. These manifolds generalize the Manhattan curves introduced by Burger \cite{burger} for pairs of convex-cocompact representations in rank $1$ Lie groups. There are recent works on Manhattan curves for some relative Anosov groups, such as \cite{kao.1,kao.2} for pairs of cusped Fuchsian representations, \cite{BCK} for pairs of cusped quasi-Fuchsian representations, \cite{kao.3} that compares quasi-Fuchsian representations with negatively curved metrics on surfaces, and \cite{BCKM} for pairs of cusped Hitchin representations.
Manhattan manifolds were introduced by Cantrell, the first named author and Sert in \cite{CRS} for tuples of left-invariant metrics on hyperbolic groups, generalizing Manhattan curves in this setting by Cantrell--Tanaka \cite{cantrell-tanaka.manhattan}.

Let $\Gamma<\msf G$ be relatively $\theta$-Anosov as above. A linear form $\psi\in \fraka_\theta^\ast$ is $(\Gamma,\theta)$-\emph{proper} if for any $C>0$ there are at most finitely many $\gamma\in \Gamma$ satisfying $\psi(\mu_\theta(\gamma))\leqslant C$. Given $(\Gamma,\theta)$-proper linear forms $\psi_0,\psi_1,\dots,\p_m \in \fraka_\theta^\ast$, their \emph{Manhattan manifold parametrization} is the function $\Theta:\R^m \ra \R$ such that for $\vv=(v_1,\dots,v_m)\in \R^m$, $\Theta(\vv)$ is the critical exponent of the series \[s \mapsto \sum_{\gamma\in \Gamma}{e^{-\sum_{i=1}^m{v_i\p_i(\mu_\theta(\gamma))}-s\p_{0}(\mu_\theta(\gamma))}}~.\]

Our next main result establishes differentiability of this parameterization.

\begin{theoremalpha}\label{theoremalpha: regularity of Manhattan manifold}
    Suppose $\Gamma < \msf G$ is a relatively $\theta$-Anosov group and $\psi_0,\psi_1,\dots,\p_m \in \fraka_\theta^\ast$ are $(\Gamma,\theta)$-proper linear forms. Then the Manhattan manifold parametrization $\Theta:\Rb^m \to \Rb$ of the tuple $(\p_0,\dots,\p_m)$ is $C^1$-regular.
\end{theoremalpha}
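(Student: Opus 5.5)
Let $\Theta$ be the Manhattan manifold parametrization of $(\psi_0,\dots,\psi_m)$. For $\vv\in\Rb^m$ put
\[\phi_{\vv}=\sum_{i=1}^m v_i\psi_i+\Theta(\vv)\psi_0\in\fraka_\theta^\ast .\]
By definition $\phi_{\vv}$ has critical exponent $1$.

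\textbf{Step 1 (preliminary facts about $\Theta$).} First I check that $\Theta$ is finite and convex. Finiteness uses properness of $\psi_0$: its critical exponent $\delta_{\psi_0}(\Gamma)$ is finite, and on the limit cone $\psi_i\leqslant C\psi_0$ because the limit cone is a closed cone on which $\psi_0$ is positive. Convexity follows from H\"older's inequality applied to the Poincar\'e series. A convex function on $\Rb^m$ is $C^1$ as soon as it is differentiable everywhere. It is therefore enough to show that at each $\vv$ the convex function $\Theta$ has a unique subgradient.

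\textbf{Step 2 ($\phi_{\vv}$ is positive on the limit cone, so Patterson--Sullivan theory applies).} Each $\phi_{\vv}$ has critical exponent $1$, so it cannot be negative somewhere on $\Lc_\theta(\Gamma)$. To see that it is strictly positive on $\Lc_\theta(\Gamma)\setminus\{0\}$, I would use the relatively Anosov analogue of the fact that a form with finite critical exponent is positive on the limit cone. One route is the comparison with Quint's growth indicator $\psi_\Gamma$: $\phi$ has critical exponent $\leqslant 1$ iff $\phi\geqslant\psi_\Gamma$ on the cone. Strict positivity is then deduced by perturbing $\Theta(\vv)$ slightly. With positivity in hand, the result of Canary--Zhang--Zimmer gives a unique Patterson--Sullivan measure $\nu_{\vv}$. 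The results of Kim--Oh and Blayac--Canary--Zhu--Zimmer give a finite, mixing Bowen--Margulis--Sullivan measure $m_{\vv}$ for the translation flow on the $\phi_{\vv}$-reparametrized space.

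\textbf{Step 3 (identifying the gradient).} The candidate gradient is
\[\nabla\Theta(\vv)=-\bigl(\psi_i(\mathbf u_{\vv})/\psi_0(\mathbf u_{\vv})\bigr)_{i=1}^m ,\]
where $\mathbf u_{\vv}\in\operatorname{int}\Lc_\theta(\Gamma)$ is the unique direction realizing $\phi_{\vv}=\psi_\Gamma$ on the cone. Equivalently, $\mathbf u_{\vv}$ is the mean drift of the Cartan/Busemann cocycle with respect to the equilibrium measure $m_{\vv}$. To prove that the subdifferential is a singleton, I would argue as follows. Any subgradient $\mathbf w$ gives a supporting hyperplane: the form $\sum_i (v_i+t w_i)\psi_i+(\Theta(\vv)+t\langle\ldots\rangle)\psi_0$ must stay $\geqslant\psi_\Gamma$. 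This forces $\mathbf w$ to be determined by the directions $\mathbf u$ on the cone where $\phi_{\vv}(\mathbf u)=\psi_\Gamma(\mathbf u)$. So the main claim is that the tangency direction is unique.

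\textbf{Main obstacle.} I expect the hardest step to be this uniqueness of the tangency direction (equivalently, strict concavity of $\psi_\Gamma$ in the relevant directions). I would first try the Kim--Oh--Wang approach. Suppose $\phi_{\vv}$ is tangent to $\psi_\Gamma$ at two directions. Use the mixing of the BMS measure $m_{\vv}$ together with a local mixing/counting asymptotic in cones. Conclude that the $\phi_{\vv}$-Patterson--Sullivan measure concentrates its conical limit points along a single asymptotic direction, namely the mean of the cocycle under $m_{\vv}$. Then any direction where $\phi_{\vv}=\psi_\Gamma$ must be this mean, by a large-deviation or shadow-lemma counting argument. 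In the relative setting the difficulty is the cusps, where shadow lemmas degrade. I would handle this by restricting to uniformly conical limit points: these carry full measure because $m_{\vv}$ is finite and ergodic. Cusp excursions then contribute negligibly to the averages by Birkhoff's theorem.
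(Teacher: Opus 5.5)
\textbf{Gap: the main step is not proved, and the sketched route cannot prove it.}

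Your Steps 1--3 do not really reduce the problem. You propose to show that the tangency direction of $\phi_\vv$ is unique. Taking $\psi_0,\dots,\psi_m$ to be a basis of $\fraka_\theta^\ast$, this amounts to the projective strict concavity of $\psi_\Gamma^\theta$. That is the content of \Cref{corollaryalpha: strict concavity of growth indicator}, which the paper \emph{deduces} from \Cref{theoremalpha: regularity of Manhattan manifold} via Quint's duality. So everything rests on your ``main obstacle'', and the argument sketched for it fails as stated.

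The ergodic theorems (Birkhoff or Kingman, even restricted to uniformly conical points) only show that $\nu_\vv$-almost every point has asymptotic Cartan direction $\mathbf u_\vv$. Take a small cone around another direction $u'$. The shadow lemma then tells you that the shadows of elements with $\mu_\theta(\gamma)$ in that cone and $\phi_\vv(\mu_\theta(\gamma))\approx T$ have total $\nu_\vv$-mass $o(1)$. With bounded overlap, this gives at most $e^{T+o(T)}$ such elements. That is perfectly compatible with $\phi_\vv(u')=\psi_\Gamma^\theta(u')$, because the growth indicator only records exponential rates.

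To exclude a second tangency direction you would need an exponential large-deviation bound, or a local mixing/counting theorem in cones strong enough to give one. You do not supply such a bound, and it is out of reach of the available tools: the paper stresses that no coding or thermodynamic formalism exists for relatively Anosov groups. Separately, \Cref{theoremalpha: regularity of Manhattan manifold} assumes no Zariski density. So $\operatorname{int}\Lc_\theta(\Gamma)$ may be empty, and your claim $\mathbf u_\vv\in\operatorname{int}\Lc_\theta(\Gamma)$ can fail.

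\textbf{The missing idea.} You never need uniqueness of the tangency direction. It suffices to produce \emph{one} slope that varies continuously in $\vv$, and for that almost-everywhere convergence is enough.

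The paper defines $\tau^j_\vv$ as the almost-sure limit of $\psi_j(\mu_\theta(\msf c_n(v)))/\psi_0(\mu_\theta(\msf c_n(v)))$ along the return cocycle. For $\nu_\vv$-almost every point, the point lies in $O_\vv(\msf c_n(v))$ for all large $n$, and $\msf c_n(v)$ eventually lies in the set $A_\vv(\epsilon)$ of elements whose ratio is $\epsilon$-close to $\tau^j_\vv$. A Borel--Cantelli argument with the shadow lemma then forces $\sum_{\gamma\in A_\vv(\epsilon)}e^{-\phi_\vv(\mu_\theta(\gamma))}=\infty$. This yields, for $h>0$,
$-\tau^j_\vv\leqslant(\Theta(\vv+h\ve_j)-\Theta(\vv))/h\leqslant-\tau^j_{\vv+h\ve_j}$.

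The real work is proving that $\vv\mapsto\tau^j_\vv$ is continuous. The paper does this in three moves:
\begin{itemize}
\item It places all Bowen--Margulis--Sullivan measures on the single reference flow space $\Omega_{\psi_0}$, with one section $\wh K$ whose boundary is null.
\item It shows $\vv\mapsto\frakm^0_\vv$ is vaguely continuous, using weak* continuity of $\nu_\vv$, which follows from uniqueness of Patterson--Sullivan measures.
\item It shows the return cocycle is continuous almost everywhere, and uses Fekete's lemma to get an $O(1/n)$ approximation of the drifts $\ell^j_\vv$ that is uniform near $\vv^0$.
\end{itemize}
Strict concavity of the growth indicator then comes out as a corollary rather than serving as an input.
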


For $m=1$, this provides the first proof of $C^1$-regularity of Manhattan curves for pairs of relative Anosov groups without relying on thermodynamic formalism, which is not yet available in this setting. On the other hand, thermodynamic formalism has been used to prove real analyticity of Manhattan curves for pairs of cusped Fuchsian representations \cite{kao.1,kao.2}, cusped Hitchin representations \cite{BCKM}, cusped quasi-Fuchsian representations \cite{BCK}, and Anosov representations \cite{cantrell-tanaka.invariant} (see also \cite{sambarino2014hyperconvex}). 

\begin{remark}
    While working on the final draft of this work, we learned that Dongryul Kim, Hee Oh and Andrew Zimmer have independently proved \Cref{theoremalpha: regularity of Manhattan manifold} \cite{kim-zimmer.C1}.
\end{remark}

A consequence of the $C^1$-regularity of Manhattan manifolds is the strict concavity and $C^1$-regularity of the growth indicator. The $\theta$-\emph{growth indicator} $\p_\Gamma^{\theta}:\fraka_\theta \ra \R \cup \{-\infty\}$, introduced by Quint \cite{quint2002divergence}, is a higher rank analogue of the critical exponent of $\Gamma$. It records the exponential growth rate of $\mu_\theta(\Gamma)$ along each direction of $\fraka_\theta$ (see \Cref{section: Patterson--Sullivan measures}). Let\[\Dc_\Gamma^\theta = \{\psi\in\fraka_\theta^\ast: \psi\geqslant \psi_\Gamma^\theta\}\] be the \emph{dual growth domain}. Its boundary consists of $(\Gamma,\theta)$-proper linear forms with critical exponent $1$. By choosing the forms $\p_0,\dots,\p_m$ as a basis of $\fraka_\theta^\ast$, the function $\Theta:\R^m \ra \R$ serves as a parameterization of the boundary of $\Dc_\Gamma^\theta$ (for Anosov groups this was noted in \cite{CRS}). Therefore, \Cref{theoremalpha: regularity of Manhattan manifold} implies that this boundary is a $C^1$-hypersurface. Together with the duality between supporting linear forms and the growth indicator developed by Quint \cite{quint2003indicateur} and Sambarino \cite{sambarino2014hyperconvex}, we deduce the following.

\begin{corollaryalpha}\label{corollaryalpha: strict concavity of growth indicator}
    Let $\Gamma <\msf G$ be a Zariski dense relatively $\theta$-Anosov group. Then the $\theta$-growth indicator $\psi^\thet_\Gamma: \Lc_\theta(\Gamma) \to [0,\infty)$ is strictly concave and $C^1$ on the interior of $\Lc_\theta(\Gamma)$.
\end{corollaryalpha}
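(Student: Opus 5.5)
We deduce the corollary from \Cref{theoremalpha: regularity of Manhattan manifold} using convex duality between $\psi_\Gamma^\theta$ and the dual growth domain $\Dc_\Gamma^\theta$.

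\emph{Step 1: the boundary of $\Dc_\Gamma^\theta$ is a $C^1$ hypersurface.} Choose $\psi_0$ to be a $(\Gamma,\theta)$-proper form, for instance a form positive on $\fraka_\theta^+\setminus\{0\}$. Complete it to a basis $\psi_0,\psi_1,\dots,\psi_m$ of $\fraka_\theta^\ast$, where each $\psi_i$ is of the form $\psi_0+\epsilon_i\phi_i$ with $\epsilon_i$ small, so that every $\psi_i$ is still $(\Gamma,\theta)$-proper. Let $\psi$ be a form with $\psi-\sum_i v_i\psi_i$ a multiple of $\psi_0$, and write $\psi=\sum_i v_i\psi_i+s\psi_0$. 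Then $\psi\in\partial\Dc_\Gamma^\theta$ exactly when $\psi$ is proper with $\delta_\psi=1$, and this holds exactly when $s=\Theta(\vv)$. So $\partial\Dc_\Gamma^\theta$ is the graph of $\Theta$ over $\R^m$, in the cone of proper forms. By \Cref{theoremalpha: regularity of Manhattan manifold} this graph is a $C^1$ hypersurface. Convexity of $\Theta$ (Hölder's inequality on the Poincaré series) shows that $\Dc_\Gamma^\theta$ is convex. Together these give that every boundary point of $\Dc_\Gamma^\theta$ has a unique supporting hyperplane.

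\emph{Step 2: duality.} By Quint \cite{quint2003indicateur} and Sambarino \cite{sambarino2014hyperconvex}, $\psi_\Gamma^\theta$ is concave, upper semicontinuous and $1$-homogeneous, is positive on the interior of $\Lc_\theta(\Gamma)$, and satisfies
\[\psi_\Gamma^\theta(u)=\inf_{\psi\in\Dc_\Gamma^\theta}\psi(u).\]
Hence $\psi_\Gamma^\theta$ is the support function of the convex set $\Dc_\Gamma^\theta$. Standard convex duality then gives two correspondences:
\begin{itemize}
\item failure of differentiability of $\psi_\Gamma^\theta$ at $u\in\operatorname{int}\Lc_\theta(\Gamma)$ corresponds to a face of $\partial\Dc_\Gamma^\theta$ of positive dimension on which $\psi(u)$ is minimized;
\item failure of strict concavity, meaning $\psi_\Gamma^\theta$ is linear along a segment not on a ray, corresponds to a point of $\partial\Dc_\Gamma^\theta$ with a non-unique supporting hyperplane.
\end{itemize}
Step 1 excludes the second possibility. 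This gives strict concavity in the appropriate sense: since $\psi_\Gamma^\theta$ is homogeneous, it means strict concavity on the projectivized cone, equivalently on the slice $\{\psi_0=1\}$.

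\emph{Step 3: the main obstacle, differentiability of $\psi_\Gamma^\theta$.} This requires ruling out flat faces of $\partial\Dc_\Gamma^\theta$, that is, strict convexity of $\Theta$. That does not follow from $C^1$ regularity alone, so Zariski density must be used here. Suppose two distinct forms $\psi,\psi'\in\partial\Dc_\Gamma^\theta$ are tangent at a common direction $u$, with the segment $[\psi,\psi']$ contained in the boundary.

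The plan is to use uniqueness of Patterson--Sullivan measures \cite{canary2025patterson} together with the mixing and finiteness of the Bowen--Margulis--Sullivan measures (Kim--Oh, Blayac--Canary--Zhu--Zimmer). Equality in Hölder's inequality along the segment should force $\nu_\psi$ and $\nu_{\psi'}$ to be equivalent. Then $(\psi-\psi')(\beta^\theta)$ is a cohomologically trivial cocycle, so $(\psi-\psi')$ vanishes on the Jordan projections of all of $\Gamma$. Zariski density implies, by Benoist, that these Jordan projections generate a dense subgroup of $\fraka_\theta$, so $\psi=\psi'$.

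The equivalence of $\nu_\psi$ and $\nu_{\psi'}$ is the delicate point. I would first try to obtain it from ergodicity of the Bowen--Margulis--Sullivan measure on the corresponding flow space.
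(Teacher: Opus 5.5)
Your Steps 1 and 2 match the paper's argument. The paper parametrizes $\partial\Dc_\Gamma^\theta$ as the Manhattan graph built from the fundamental weights $\omega_\alpha$, $\alpha\in\theta$ (these play the role of your proper basis $\psi_0,\dots,\psi_m$). It gets a $C^1$ hypersurface from \Cref{theoremalpha: regularity of Manhattan manifold}, and then uses the Quint--Sambarino duality to turn uniqueness of supporting hyperplanes into strict concavity of $\psi_\Gamma^\theta$. Differentiability then comes, as you say, from strict convexity of $\partial\Dc_\Gamma^\theta$ via the singleton-superdifferential criterion.

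The gap is Step 3. You correctly see that strict convexity is needed and does not follow from $C^1$ regularity, but you do not prove it, and the mechanism you propose does not work as stated. The H\"older bound $\sum_\gamma e^{-s\psi_t(\mu_\theta(\gamma))}\leqslant (\sum_\gamma e^{-s\psi(\mu_\theta(\gamma))})^{1-t}(\sum_\gamma e^{-s\psi'(\mu_\theta(\gamma))})^{t}$ only compares exponential growth rates. For $s>1$ it is strict, and at $s=1$ both sides diverge, so $\delta_{\psi_t}(\Gamma)=1$ is not an equality case of this inequality. Nothing about $\nu_\psi,\nu_{\psi'}$ can be read off from it. Moreover, even granting $\nu_\psi\sim\nu_{\psi'}$, the Radon--Nikodym derivative is only measurable. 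Concluding that $\psi-\psi'$ vanishes on Jordan projections needs a genuine argument, not just the phrase ``cohomologically trivial''.

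The paper does not reprove this step. It takes strict convexity directly from \cite[Corollary~1.10]{canary2025patterson}, which gives $\delta_{\psi_t}(\Gamma)<1$ for all $t\in(0,1)$ whenever $\psi\neq\psi'$ both lie in $\partial\Dc_\Gamma^\theta$ (in the Zariski dense setting of the corollary). That citation is the missing input for your Step 3.

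If you want to argue it yourself, apply H\"older to shadows rather than to the Poincar\'e series:
\begin{enumerate}
\item By the shadow lemma, $\nu_{\psi_t}(O(\gamma))\asymp\nu_\psi(O(\gamma))^{1-t}\,\nu_{\psi'}(O(\gamma))^{t}$.
\item Cover a Borel set $A$ of conical limit points by shadows contained in an open $U\supset A$, and extract a disjoint subfamily whose enlargements still cover $A$.
\item H\"older on the resulting sums gives $\nu_{\psi_t}(A)\leqslant C\,\nu_\psi(U)^{1-t}\nu_{\psi'}(U)^{t}$.
\end{enumerate}
So if $\delta_{\psi_t}(\Gamma)=1$, then $\nu_\psi$ and $\nu_{\psi'}$ cannot be mutually singular. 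You would still need the nontrivial fact that non-singularity forces $\psi$ and $\psi'$ to agree on all Jordan projections of $\Gamma$; only then does Benoist's density theorem give $\psi=\psi'$.
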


Note that $\psi^\thet_\Gamma$ is homogeneous along any direction in $\Lc_\theta(\Gamma)$ by definition. Here the strict concavity of $\psi^\thet_\Gamma: \Lc_\theta(\Gamma) \to [0,\infty)$ means that for any $\vv_1,\vv_2 \in \Lc_\theta(\Gamma)\setminus \{0\}$ that are non-parallel, the function $[0,1]\ni t\mapsto  \psi^\thet_\Gamma ((1-t) \vv_1 + t \vv_2)$ is strictly concave. This projective strict concavity appears to be new, while the $C^1$-regularity of $\psi^\thet_\Gamma$ is also an immediately consequence of \cite[Corollary~1.9, Corollary~1.10]{canary2025patterson}.

This result also recovers the strict  concavity and differentiability of growth indicators for Zariski dense Anosov representations, first proven by Quint for Schottky groups \cite{quint2003indicateur}, then by Sambarino for hyperconvex representations \cite{sambarino2014hyperconvex}, and finally by Kim--Oh--Wang for arbitrary Anosov groups \cite[Theorem~13.2]{kim2025properly}.

\subsection{A hyperbolic metric for relatively Morse groups}\label{sectionintroduction: relatively Morse groups}

We assume now that $\theta$ is symmetric, i.e., $\theta = i(\theta)$, where $i$ is the opposite involution on the root system (see \Cref{section: Lie groups}). Suppose $\Gamma< \msf G$ is relatively $\theta$-Anosov and let $\varphi\in \fraka_\theta^\ast$ be a symmetric, $(\Gamma,\theta)$-proper linear form. Dey--Kim--Oh \cite{dey2024ahlfors} considered the premetric defined on the limit set $\Lambda_\theta$ of $\Gamma$ by \[\vis{d}_\varphi(\xi,\eta)= \begin{cases}e^{-\varphi(G^\theta(\xi,\eta))} & \text{if } \xi\neq \eta~,\\ 0 & \text{if } \xi=\eta~.\end{cases}\] Here $G^\theta$ denotes the $\fraka_\theta$-valued Gromov product as in \Cref{section: Lie groups}. When $\Gamma$ is $\theta$-Anosov, they proved the Ahlfors regularity of $\nu_\varphi$ with respect to $\vis{d}_\varphi$ \cite[Theorem~1.1]{dey2024ahlfors}.

We further study the premetric $d_\varphi$ in the setting of relatively Morse groups. Recall from the work of Kapovich--Leeb \cite{kapovich2018relativizing} that $\Gamma <\msf G$ is a \emph{relatively $\theta$-Morse group} if it admits a relatively hyperbolic group structure $(\Gamma,\Pc)$ and there exists a Gromov model $Y$ of $(\Gamma,\Pc)$ and a $\Gamma$-equivariant $\theta$-Morse quasi-isometric embedding $F:Y\to \msf G/\msf K$. Here $\theta$-Morse means that the image of every geodesic segment in $Y$ lies in a uniform neighborhood of the corresponding $\theta$-diamond in $\msf G/\msf K$ (see \Cref{section: relatively Morse groups} for details). Relatively Morse groups are relatively asymptotically embedded \cite[Theorem~8.3]{kapovich2018relativizing} and hence relatively Anosov \cite[Proposition~4.4]{zhu2022relatively}.

Relatively Morse groups form a natural class of relatively Anosov groups for which the orbit geometry in the symmetric space is better controlled, and contain all classical Anosov groups. Although it remains unknown whether the relatively Morse condition always holds for relatively Anosov groups, groups satisfying these conditions are plentiful. Zhu--Zimmer \cite{zhu2024relatively} showed via an even stronger condition called ``uniformly Anosov'' that 
\begin{itemize}
    \item If $\Gamma< \msf G$ is relatively $\theta$-Anosov and the relatively hyperbolic group structure of $\Gamma$ is coming from a Fuchsian representation $\Gamma\to \PSL(2,\Rb)$, then $\Gamma$ is $\theta$-Morse \cite[Corollary~1.14]{zhu2024relatively}. This is the case for the cusped Hitchin representations introduced in \cite{canary2022cusped}.
    \item If $\msf G$ is of rank $1$ and $\tau: \msf G\to \SL(n,\Rb)$ is a representation such that the image of $\tau$ contains a $\msf P_k$-proximal element, then for any geometrically finite subgroup $\Gamma< \msf G$, one has that $\tau(\Gamma)$ is $\msf P_k$-Morse \cite[Proposition~1.10]{zhu2024relatively}.
\end{itemize}

We now assume that $\Gamma < \msf G$ is relatively $\theta$-Morse and $F:Y\to \msf G/\msf K$ is a $\Gamma$-equivariant $\theta$-Morse quasi-isometric embedding from a Gromov model $Y$ to the symmetric space. For any $g \msf K, h \msf K \in \msf G / \msf K$, we define \[D_\varphi(g \msf K,h \msf K) = \varphi(\mu_\theta(g^{-1}h)),\] which is independent of the choice of $g$ and $h$, and for any $x,y\in Y$ we set \[\msf{d}_\varphi(x,y) = D_\varphi(F(x),F(y))~.\]

Our next result asserts that $\msf d_\varphi$ is a hyperbolic metric, up to a uniformly bounded additive perturbation.

\begin{theoremalpha}\label{theoremalpha: relativemorse implies hyperbolic}
    Suppose $\Gamma<\msf G$ is a relatively $\theta$-Morse group and $\varphi\in \fraka_\theta^\ast$ is a symmetric, $(\Gamma,\theta)$-proper linear form. Then there exists a constant $C>0$ such that 
    \begin{equation*}
    \ov{\msf{d}}_\varphi(x,y) =
        \begin{cases}
            \ \msf{d}_{\varphi}(x, y)+ C & \text{ if }x\ne y\\
            \  0 & \text{ if }x=y
        \end{cases}
    \end{equation*} is a $\Gamma$-invariant, Gromov hyperbolic metric on $Y$. Moreover, $(Y,\ov{\msf{d}}_\varphi)$ is a Gromov model for $(\Gamma,\Pc)$ that is $\Gamma$-equivariantly quasi-isometric to a Groves--Manning cusped space (see \Cref{remark: gromov models}).
\end{theoremalpha}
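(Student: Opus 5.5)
The plan is to prove the theorem in three steps: $\msf d_\varphi$ is a quasi-metric that is quasi-isometric to $\msf d_Y$; it satisfies the triangle inequality up to an additive constant; and the Gromov four-point condition holds up to an additive constant, with the Morse property supplying the key estimate. The additive constant $C$ is then chosen to absorb the triangle-inequality defect.

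\emph{Basic properties.} $\Gamma$-invariance is immediate, since $F$ is equivariant and $\mu_\theta(g^{-1}h)$ depends only on the pair of cosets. Symmetry $\msf d_\varphi(x,y)=\msf d_\varphi(y,x)$ follows from $\mu_\theta(g^{-1})=i(\mu_\theta(g))$ and the symmetry of $\varphi$. Next we need comparability with $\msf d_Y$. For pairs of the form $(x,\gamma x)$ with $x$ in a fixed compact fundamental region (up to the bounded cusp-horoball issue), $(\Gamma,\theta)$-properness together with the orbit quasi-isometry to $X_{\GM}$ give linear upper and lower bounds. Concretely, $\varphi\circ\mu_\theta$ is bounded above by a multiple of $\msf d_{\msf G/\msf K}$. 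For the lower bound I would use that $\varphi$ is positive on $\Lc_\theta(\Gamma)\setminus\{0\}$, which follows from properness as in Canary--Zhang--Zimmer. Combined with the fact that $\mu_\theta(\gamma)$ lies in a cone neighbourhood of the limit cone with linear error, this gives $\varphi(\mu_\theta(\gamma))\geqslant c\,|\mu_\theta(\gamma)|-c'$.

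For arbitrary $x,y\in Y$, the pair $F(x),F(y)$ is handled via the Morse property. The geodesic from $x$ to $y$ maps into a uniform neighbourhood of a $\theta$-diamond. Hence $\mu_\theta$ of the displacement is close to a vector in a $\theta$-regular cone, with angular regularity controlled uniformly. This gives $\msf d_\varphi\asymp \msf d_Y$ up to multiplicative and additive constants.

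\emph{Additivity along geodesics.} This is the key step. Let $z$ lie on a geodesic $[x,y]$ in $Y$. Since $F([x,y])$ lies near the diamond $\Diam_\theta(F(x),F(y))$, we can use the standard fact that for points $p,q,r$ with $q$ in a bounded neighbourhood of a $\theta$-diamond from $p$ to $r$,
\[ \mu_\theta(p,r) = \mu_\theta(p,q)+\mu_\theta(q,r) + O(1). \]
The $\theta$-components of Cartan projections are additive along diamonds up to bounded error, coming from the flat/parallel-set structure. Applying $\varphi$ gives
\[ \msf d_\varphi(x,y)=\msf d_\varphi(x,z)+\msf d_\varphi(z,y)+O(1). \]
Together with the general subadditivity $\mu_\theta(gh)\leqslant$ triangle-type bounds, we get a coarse triangle inequality. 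Concretely, for arbitrary $x,y,w$ I would pick $z$ the projection of $w$ onto $[x,y]$ (a hyperbolic centre), use additivity along $[x,w]$ and $[w,y]$ near their common stems, and use boundedness of $\msf d_\varphi$ on $\msf d_Y$-bounded pairs. This yields $\msf d_\varphi(x,y)\leqslant \msf d_\varphi(x,w)+\msf d_\varphi(w,y)+C_0$.

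Setting $C=C_0$ makes $\ov{\msf d}_\varphi$ a genuine metric. Positivity for $x\neq y$ needs only $\msf d_\varphi\geqslant -C$ on close pairs, which holds after enlarging $C$ because the quasi-isometric lower bound already handles far pairs.

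\emph{Hyperbolicity and the Gromov model.} With additivity along $\msf d_Y$-geodesics, every $\msf d_Y$-geodesic is a rough geodesic for $\ov{\msf d}_\varphi$. Since $\ov{\msf d}_\varphi$ is quasi-isometric to the hyperbolic $\msf d_Y$, we can verify thin triangles directly. Gromov products $(x|y)_w$ in $\ov{\msf d}_\varphi$ agree up to bounded error with $\ov{\msf d}_\varphi(w,[x,y])$, computed via the centre decomposition above. The four-point condition then transfers from $\msf d_Y$: the tripod structure of triangles in $Y$, with each side split at its centre point, gives all the needed identities up to $O(1)$. A rough-geodesic, quasi-isometric-to-hyperbolic space is hyperbolic, and the quasi-isometry with $\msf d_Y$ makes $(Y,\ov{\msf d}_\varphi)$ a Gromov model equivariantly quasi-isometric to $X_{\GM}$.

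I expect the main obstacle to be the additivity estimate. It requires a uniform diamond-additivity lemma for $\mu_\theta$, including near cusps, where the Morse constants must remain uniform even though the points are far from the thick part.
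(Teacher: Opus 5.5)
Your key step is the same as the paper's. $F(z)$ lies within a uniform distance of a point of $\Diam^\theta(F(x),F(y))$, $\mu_\theta$ is exactly additive on the diamond, and $\mu$ is uniformly Lipschitz, so the additivity error is uniform everywhere. The cusp worry in your last paragraph is therefore not an obstacle, since the Morse constant is global. Your centre/tripod argument is the proof of the criterion \Cref{lemma: criterion for hyperbolicity}. The paper applies that criterion to $D=\msf d_\varphi$ after checking symmetry, a lower bound, local boundedness and coarse additivity.

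The gap is the global comparison $\msf d_\varphi\geqslant c\,\msf d_Y-c'$, on which all your later steps rest.

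\emph{Why the inference fails.} You argue that $\mu_\theta(F(x)^{-1}F(y))$ is uniformly $\theta$-regular, hence $\varphi$ of it is comparable to $\msf d_Y(x,y)$. This does not hold. Regularity controls $\alpha\circ\mu$ for $\alpha\in\theta$, whereas $\varphi$ is only known to be positive on $\Lc_\theta(\Gamma)$, which records displacements between orbit points. A symmetric $\varphi$ positive on $\Lc_\theta(\Gamma)$ can be negative on other regular directions in $\fraka_\theta^+$.

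\emph{The horoballs are not a bounded issue.} Horoball points are arbitrarily far from $\Gamma y_0$. Since $\msf d_Y(\cdot,\Gamma y_0)$ is $\Gamma$-invariant, equivariance cannot reduce such pairs to orbit pairs.

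\emph{Why it matters.} In your triangle inequality the defect is $\msf d_\varphi(x,w)+\msf d_\varphi(w,y)-\msf d_\varphi(x,y)\approx 2\,\msf d_\varphi(z,w)$, with $z,w$ possibly far apart. So you need $\msf d_\varphi\geqslant -C$ on all pairs, and local boundedness only bounds it from above. The positivity of $\ov{\msf d}_\varphi$, your ``quasi-isometric to a hyperbolic space'' step, and the Gromov-model claim all use the same unproven bound.

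\emph{What is actually needed.} Hyperbolicity only needs a uniform lower bound, not a quasi-isometry. The paper takes that lower bound from the choice of $\varphi$ and imports the comparison with the Groves--Manning space from Zhu--Zimmer's orbit quasi-isometry. Away from the orbit, such a bound has to come from properness and symmetry rather than regularity. For instance, let $p$ be an orbit point and $\gamma$ a peripheral element such that $x$ lies near $[p,\gamma p]$ and $\gamma x$ lies near $x$. Then additivity, invariance and symmetry give $2\,\msf d_\varphi(p,x)\approx \msf d_\varphi(p,\gamma p)$, which properness bounds below. Pairs at different depths of one horoball need a further argument.
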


We call $\msf d_\varphi$ and also its bounded perturbation $\ov{\msf{d}}_\varphi$ \emph{scalar Cartan metrics} on $Y$. When $\Gamma$ is $\theta$-Anosov, this result was essentially proven in \cite[Corollary~4.8]{dey-kapovich} (see also \cite{jyothis-martinezgranado}).

The proof of \Cref{theoremalpha: relativemorse implies hyperbolic} is based on the Morse geometry of the orbit map developed in a series of papers of Kapovich--Leeb \cite{kapovich2017discrete, kapovich2018finsler, kapovich2018relativizing} and Kapovich--Leeb--Porti \cite{kapovich2014morse, kapovich2016some, kapovich2017anosov, kapovich2017discrete, kapovich2018morse}. For a symmetric, $(\Gamma,\theta)$-proper linear form $\varphi$, the relative $\theta$-Morse condition implies that the scalar Cartan metric is coarsely additive along points lying near geodesics in $Y$. Some of these estimates also appear in the recent work of Kim--Oh \cite[Proposition~4.9, Lemma~4.10]{kim-oh.globalshadow}.
Combining this coarse additivity with local boundedness of $\msf d_\varphi$ and the hyperbolicity of $Y$, the hyperbolicity of the scalar Cartan metric follows from a standard criterion.

From \Cref{theoremalpha: relativemorse implies hyperbolic} we deduce that, when restricted to the Gromov model $Y$ (seen as a subset of $\msf G /\msf K$ via $F$), the Busemann function $\varphi \circ B^\theta$ and Gromov product $\varphi \circ G^\theta$ defined in the Lie theoretic sense coincide with the Busemann function and Gromov product of $(Y,\ov{\msf{d}}_\varphi)$ in the classical sense of Gromov hyperbolicity (see \Cref{subsection:compatibility Busemann}). In particular, $\vis{d}_\varphi$ is a visual quasi-metric associated to $\ov{\msf{d}}_\varphi$. This fact together with \Cref{theoremalpha: exact dimensionality} immediately imply the following.

\begin{corollaryalpha}\label{corollaryalpha: premetric exact dimensionality}
    Suppose $\Gamma<\msf G$ is a relatively $\theta$-Morse group and $\varphi,\psi\in \fraka_\theta^\ast$ are $(\Gamma,\theta)$-proper, positive linear forms. If $\varphi$ is symmetric and $\delta_\psi(\Gamma)=1$, then the Patterson--Sullivan measure $\nu_{\psi}$ is exact dimensional with respect to $\vis{d}_\varphi$.
\end{corollaryalpha}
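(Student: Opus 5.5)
The corollary should follow by combining \Cref{theoremalpha: relativemorse implies hyperbolic} with \Cref{theoremalpha: exact dimensionality}. The only real work is to show that $\vis{d}_\varphi$ is, up to a bounded multiplicative constant, a visual quasi-metric for the hyperbolic metric $\ov{\msf d}_\varphi$.

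First, I apply \Cref{theoremalpha: relativemorse implies hyperbolic} to the symmetric, $(\Gamma,\theta)$-proper form $\varphi$. This gives a Gromov model $(Y,\ov{\msf d}_\varphi)$ for $(\Gamma,\Pc)$ that is $\Gamma$-equivariantly quasi-isometric to a Groves--Manning cusped space. Its Gromov boundary is $\Gamma$-equivariantly identified with the Bowditch boundary $\partial(\Gamma,\Pc)$, and hence with $\Lambda_\theta$. Since the hypotheses on $\psi$ (proper, critical exponent $1$) are exactly those of \Cref{theoremalpha: exact dimensionality}, that theorem shows $\nu_\psi$ is exact dimensional with dimension $h$ with respect to any visual quasi-metric $e^{-(\xi,\eta)_{y_0}}$ of $\ov{\msf d}_\varphi$.

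Second, I compare the Lie-theoretic Gromov product with the hyperbolic one. Take the basepoint $y_0$ with $F(y_0)=o=[\msf K]$. The claim is that there is $C'>0$ with
\[\bigl|\varphi(G^\theta(\xi,\eta)) - (\xi,\eta)_{y_0}\bigr|\leqslant C'\]
for all distinct $\xi,\eta\in\Lambda_\theta$. This is the compatibility stated in \Cref{subsection:compatibility Busemann}. To check it, write
\[(\xi,\eta)_{y_0}=\lim \tfrac12\bigl(\ov{\msf d}_\varphi(y_0,x_n)+\ov{\msf d}_\varphi(y_0,y_n)-\ov{\msf d}_\varphi(x_n,y_n)\bigr)\]
along sequences $x_n\to\xi$, $y_n\to\eta$ in $Y$, up to bounded error. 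The additive constant $C$ contributes only a bounded term. The remaining quantity $\tfrac12\bigl(\varphi\mu_\theta(g_n)+\varphi\mu_\theta(h_n)-\varphi\mu_\theta(g_n^{-1}h_n)\bigr)$ converges, up to bounded error, to $\varphi(G^\theta(\xi,\eta))$. This uses the standard relation between the Cartan projection and Busemann functions, $\varphi\circ\mu_\theta(g^{-1}h)\approx \varphi(\beta)$ differences, together with the symmetry of $\varphi$, which lets us replace $\mu_\theta(g^{-1})=i(\mu_\theta(g))$.

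With this estimate, $e^{-C'}\vis{d}_\varphi\leqslant e^{-(\cdot,\cdot)_{y_0}}\leqslant e^{C'}\vis{d}_\varphi$. Hence every $\vis{d}_\varphi$-ball $B_\varphi(\xi,r)$ is sandwiched between visual balls of radii $e^{\mp C'}r$. Since $\log(e^{\pm C'}r)/\log r\to1$ as $r\to0$, the exact-dimensional limit $h$ transfers from the visual quasi-metric to $\vis{d}_\varphi$ at $\nu_\psi$-almost every $\xi$.

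The main obstacle is the second step, namely uniform boundedness of the comparison constant. The convergence of the Cartan-projection expression to $G^\theta$ must be controlled even when $x_n,y_n$ escape through cusps. I would handle this by using the Morse property: the geodesic from $y_0$ to $x_n$ stays near a $\theta$-diamond, so the coarse additivity from the proof of \Cref{theoremalpha: relativemorse implies hyperbolic} gives bounded error along these geodesics independently of cusp excursions.
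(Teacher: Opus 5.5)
Your proposal is correct and follows the paper's route. \Cref{theoremalpha: relativemorse implies hyperbolic} makes $(Y,\ov{\msf d}_\varphi)$ a Gromov model quasi-isometric to a Groves--Manning space. The compatibility of $\varphi\circ\beta^\theta$ and $\varphi\circ G^\theta$ with the Busemann function and Gromov product of $\ov{\msf d}_\varphi$ (via \Cref{lemma: compatibility of Busemann functions}) makes $\vis{d}_\varphi$ a visual quasi-metric up to bounded multiplicative error, and \Cref{theoremalpha: exact dimensionality} then applies. Your sandwiching of $\vis{d}_\varphi$-balls between visual balls just spells out the bounded-error tolerance that the paper absorbs into the phrase ``visual quasi-metric''.
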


In particular, the Patterson--Sullivan measure $\nu_\varphi$ can be seen as a quasi-conformal measure for $(Y,\ov{\msf d}_\varphi)$. Since $(Y,\ov{\msf d}_\varphi)$ is also a Gromov model for $(\Gamma,\Pc)$, some recent results for Patterson--Sullivan theory of relatively Morse groups are indeed instances of the analogous results for geometrically finite actions on Gromov hyperbolic spaces. For example,
\begin{itemize}
    \item The Ahlfors regularity of Patterson--Sullivan measures for Anosov representations by Dey--Kim--Oh \cite{dey2024ahlfors} follows from the corresponding assertion for quasi-conformal measures for geometric actions of hyperbolic groups due to Coornaert \cite[Proposition~7.4]{coornaert}.
    \item The global shadow estimate for relatively Morse groups recently obtained by Kim--Oh \cite[Theorem~1.4]{kim-oh.globalshadow} follows from the corresponding result for geometrically finite groups of isometries of Gromov hyperbolic spaces by Bray--Tiozzo \cite[Theorem~1.4]{bray-tiozzo}.
\end{itemize}

\medskip
\subsection{Comments on the proofs of Theorems \ref{theoremalpha: exact dimensionality} and \ref{theoremalpha: regularity of Manhattan manifold}}

We briefly explain the methods in the proof of exact dimensionality of the Patterson--Sullivan measures and regularity of Manhattan manifolds for relatively Anosov groups. Our approach is dynamical, inspired by the proof of Cantrell--Tanaka \cite{cantrell-tanaka.manhattan} of regularity of Manhattan curves for pairs of metrics on hyperbolic groups.

We start with the flow space $(\wt \Omega,\phi)$, where $\wt \Omega=\Lam_{\theta}^{(2)}\times \R$ and $\Lambda_\theta^{(2)}$ is the set of transverse pairs in $\Lam_\theta \times \Lam_{i(\theta)}$, and the flow $\phi$ is induced by translations in the $\R$-coordinate. This is the analog of the geodesic flow on a rank $1$ symmetric space. Each $(\Gamma,\theta)$-proper linear form $\psi\in \fraka_\theta^\ast$ induces a properly discontinuous action of $\Gamma$ on $\wt \Omega$ that commutes with $\phi$, and hence the quotient $\Om_\psi=\Gamma \bs \wt \Om$ is also equipped with a flow $\phi$. This construction is due to Kim--Oh--Wang \cite{kim2025properly}. The system $(\Om_\psi,\phi)$ is also equipped with a natural $\phi$-invariant measure, namely the \emph{Bowen--Margulis--Sullivan measure} $\frakm_\psi$, which is built from the Patterson--Sullivan measures $\nu_\psi$ and $\nu_{\psi \circ i}$. A key consequence of the relative $\theta$-Anosov property is that the measure $\frakm_\psi$ is finite and mixing, as proven by Kim--Oh \cite{kim2025relatively} and independently in the GPS framework by Blayac--Canary--Zhu--Zimmer \cite{blayac2024counting}. This allows us to describe the dynamics of $(\Om_\psi,\phi,\frakm_\psi)$ in terms of a first return map to a compact subset $K\subset \Om_\psi$. By choosing a precompact Borel lift of $K$ in the cover $\wt \Om$, we also obtain a measurable cocycle $\msf{c}:\N \times K\ra \Gamma$. The linear form $\psi$ and the action of $\Gamma$ on $(Y,\msf d_Y)$ also induce almost subadditive functions on $\Gamma$, which, when composed with $\msf c$, yield two almost subadditive sequences of measurable real functions on $K$. The exact dimensionality of $\nu_\psi$ then follows by applying the Kingman's subadditive ergodic theorem to these sequences (with respect to $\frakm_\psi$), combined with the shadow lemma for relatively Anosov groups.

The proof of \Cref{theoremalpha: regularity of Manhattan manifold} uses these same techniques, with the extra difficulty that now we have a family of flow spaces $\vv\mapsto (\Om_\vv,\phi,\frakm_\vv)$ constructed from the assignment $\R^m \ni \vv \mapsto \p_\vv\in \fraka_\theta^\ast$ given by the Manhattan parameterization $\Theta:\R^m \ra \R$ (see \Cref{eq:p_vv}). We therefore fix the reference flow space $\Omega_{0}$ associated to $\psi_0$ and place all Bowen--Margulis currents on this single space. Locally, the flow and the first return map are fixed, while only the new Bowen--Margulis--Sullivan measure $\frakm_\vv^0$ on $\Om_{\psi_0}$ varies with $\vv$. We then verify the finiteness and ergodicity of these measures, the  weak* continuity of the Patterson--Sullivan measure $\nu_\vv$, and the vague continuity of the Bowen--Margulis--Sullivan measures $\frakm_\vv^0$ as $\vv$ varies. Combined with the coarse reparameterization theorem by Kim--Oh (\Cref{theorem: the flow space for relatively Anosov}), these ingredients give the continuity of the drift ratios coming from the almost subadditive sequences induced by the cocycle $\msf c$.
We then show that these ratios determine the one sided directional derivatives of the Manhattan manifold parametrization, so their continuity implies the $C^1$-regularity of $\Theta$.

Even though we only discuss relatively Anosov groups, we hope this approach works in more general settings. We prove our results in the more axiomatic frameworks of \Cref{theorem: exact dimensionality framework} and \Cref{theorem: manhattan framework}, and we expect similar axioms to hold in any situation in which a group action has well-behaved Patterson--Sullivan and Bowen--Margulis--Sullivan measures. The key properties we require are a shadow lemma that controls the Patterson--Sullivan measures on balls for some natural premetric, and the finiteness and ergodicity or mixing property for the Bowen--Margulis--Sullivan measures. Promising candidates are the Patterson--Sullivan systems introduced by Kim--Zimmer \cite{kim2025rigidity}, and the GPS systems for convergence group actions \cite{blayac2024counting, blayac2024patterson}.\footnote{Indeed, part of the GPS perspective was adopted by Kim--Oh--Zimmer \cite{kim-zimmer.C1} in their independent proof of \Cref{theoremalpha: regularity of Manhattan manifold}, as they also extend this result to transverse groups.} Recently, the notion of strong positive recurrence (related to finiteness of Bowen--Margulis--Sullivan measures) for GPS systems was developed by Wen \cite{wen}.

\medskip
\subsection{Structure of the paper}
In \Cref{section: preliminaries} we recall the notions of relatively hyperbolic groups and Gromov models, Lie theoretic notation, relatively Anosov groups, Patterson–Sullivan measures, and the flow spaces constructed by Kim--Oh--Wang \cite{kim2025properly} and further studied by Kim--Oh \cite{kim2025relatively}. In \Cref{section: first return} we construct the first return systems and group-valued cocycles used throughout the paper. In \Cref{section: exact dimensionality} we first prove \Cref{theorem: exact dimensionality framework}, a general theorem for exact dimensionality, which we then apply to the Patterson--Sullivan measures and the visual quasi-metrics arising from Gromov models for relatively Anosov groups and complete the proof of \Cref{theoremalpha: exact dimensionality}. In \Cref{section: hyperbolicity of relatively Morse} we prove \Cref{theoremalpha: relativemorse implies hyperbolic} that for relatively Morse groups, the scalar Cartan metrics are hyperbolic. In particular, the premetric constructed in Dey–Kim–Oh \cite{dey2024ahlfors} is a visual quasi-metric for some Gromov model, and as a consequence we deduce \Cref{corollaryalpha: premetric exact dimensionality}. Finally, in \Cref{section: manhattan manifolds} we prove \Cref{theorem: manhattan framework}, a general theorem for regularity of Manhattan curves. This is applied to relatively Anosov groups to prove \Cref{theoremalpha: regularity of Manhattan manifold} and deduce \Cref{corollaryalpha: strict concavity of growth indicator}, the strict concavity of the growth indicators.

\medskip
\subsection{Acknowledgments}

The authors thank Subhadip Dey, Dongryul Kim, D\'idac Mart\'inez-Granado, Hee Oh, Zhufeng Yao and Andrew Zimmer for helpful discussions and comments related to this work.

This paper is based upon work supported by the National Science Foundation under Grant No. DMS-2424139, while the authors were in residence at the Simons Laufer Mathematical Sciences Institute in Berkeley, California, during the Spring 2026 semester. E. Reyes is supported by ANID Fondecyt Iniciaci\'on grant 11260637.

\section{Preliminaries}\label{section: preliminaries}

\subsection{Hyperbolic geometry}

\subsubsection{Relatively hyperbolic groups}\label{section: relatively hyperbolic groups}

The notion of relatively hyperbolic group was introduced by Gromov \cite{gromov1987hyperbolic}. For the preliminaries needed below, we refer the reader to Bowditch \cite{bowditch2012relatively} and Hruska \cite{hruska2010relative}.

Suppose $\Gamma$ is a finitely generated group and $\Pc$ is a finite collection of subgroups of $\Gamma$. We say $\Gamma$ is \emph{hyperbolic relative to} $\Pc$ if it acts as a geometrically finite convergence group on a compact metrizable space $\partial(\Gamma,\Pc)$ with the stabilizers of parabolic points precisely the conjugates of subgroups in $\Pc$. In this case we call $(\Gamma,\Pc)$ a \emph{relatively hyperbolic group} and $\partial(\Gamma,\Pc)$ the \emph{Bowditch boundary} of $(\Gamma,\Pc)$. Following the work of \cite{yaman2004topological} and \cite{bowditch2012relatively}, $(\Gamma,\Pc)$ is a relatively hyperbolic group if and only if it admits a cusp uniform action on a Gromov hyperbolic space $Y$ with cusp subgroups being the conjugates of subgroups in $\Pc$, in which case $\partial(\Gamma,\Pc)$ is $\Gamma$-equivariantly identified with the Gromov boundary $\partial Y$ of $Y$. Such a $Y$ is called a \emph{Gromov model} of $(\Gamma,\Pc)$. Roughly speaking, this means that $\Gamma$ acts properly discontinuously by isometries on $Y$ and $Y$ admits a decomposition into a cocompact part and finitely many orbits of horoballs. 

\begin{remark}\label{remark: gromov models}
    In the context of this paper, we require the Gromov model to be a \emph{roughly geodesic space} rather than geodesic space, meaning that any two points can be joined by a $(1,c)$-quasi-geodesic for some uniform $c\geqslant 0$. This assumption does not alter the expected properties for Gromov models.
    In addition, we always assume that the Gromov models are \emph{taut}, meaning that any point lies in the neighborhood of a bi-infinite rough geodesic of uniform radius, which is a standard assumption for Gromov models (cf.~\cite[Definition~3.10]{kapovich2018relativizing}).
\end{remark}

Suppose $(\Gamma,\Pc)$ is a relatively hyperbolic group. The work of Groves--Manning \cite{groves2008dehn} provides a standard construction of a Gromov model for $(\Gamma,\Pc)$, which we denote by $X_{\GM}=X_{\GM}(\Gamma,\Pc)$. This space is a locally finite graph, whose cocompact part can be identified with a Cayley graph of $\Gamma$ for some finite generating set. 

Let $\Gc(X_{\GM})$ denote the space of bi-infinite geodesics in $X_{\GM}$ parametrized by length, endowed with the geodesic flow $\varphi^s\sigma(\cdot)=\sigma(\cdot+s)$. We define a metric on $\Gc(X_{\GM})$ by \[\msf d_{\Gc(X_{\GM})}(\sigma_1,\sigma_2)= \int_{\Rb} e^{-\frac{\abs{t}}{2}} \msf{d}_{\GM}(\sigma_1(t),\sigma_2(t)) \de t ~.\] 
There is a natural isometric action of $\Gamma$ on $\Gc(X_{\GM})$, so that the coarse projection $\Gc(X_{\GM}) \to X_{\GM}$, $\sigma \mapsto \sigma(0)$ is a $\Gamma$-equivariant quasi-isometry and $s\mapsto \varphi^{s}\sigma$ is a geodesic parametrized by length for any $\sigma\in \Gc(X_{\GM})$.\\

\subsubsection{Visual quasi-metrics and shadows}\label{section: visual metrics and shadows}
Let $(Y,\msf d_Y)$ be a $\delta$-hyperbolic space with a basepoint $y_0$, and let $(\cdot,\cdot)_{y_0}$ denote the corresponding Gromov product, which we also extend to the Gromov boundary. Let $\vis{d}_Y$ be the visual quasi-metric on $\partial Y$ given by
\[\vis{d}_Y(\xi,\eta)= \begin{cases}e^{-(\xi,\eta)_{y_0}} & \text{if } \xi\neq \eta~,\\ 0 & \text{if } \xi=\eta~.\end{cases}\] 
Actually, it is well-known (see for example \cite[Part~III, Proposition~3.21]{bridson2013metric}) that for $\ep>0$ small enough there exists $C\geqslant 1$ and a metric $\vis{d}_{\ep,Y}$ on $\partial Y$ such that \[C^{-1}e^{-\epsilon(\xi,\eta)_{y_0}} \leqslant \vis{d}_{\ep,Y}(\xi,\eta) \leqslant e^{-\epsilon(\xi,\eta)_{y_0}} \quad \text{ for all } \xi,\eta\in \partial Y~.\] Note that all of our discussion throughout the paper for $\vis{d}_Y$ can be adapted to $\vis{d}_{\epsilon,Y}$.

The $R$-\emph{shadow} of $y$ viewed from $y_0$ is defined as \[O_R^Y(y_0,y) = \{\xi\in \partial Y: (\xi,y)_{y_0} \geqslant d_Y(y_0,y)-R\}~.\] The lemma below follows from a standard hyperbolic geometry argument (cf.~ \cite[Proposition~2.1]{BHM.harmonic}). For $r>0$ and $\xi\in \partial Y$, let $B_{\vis{d}_Y}(\xi,r)=\{\eta\in \partial Y: \vis{d}_Y(\xi,\eta)< r \}$ be the ball of radius $r$ centered at $\xi$.

\begin{lemma}\label{lemma: shadow and visual ball comparison}
    For any $R_0\geqslant 0$, there exists $R\geqslant R_0$ and $s \geqslant 0$ such that for every $y \in Y$ and every $\zeta\in O_{R_0}^Y(y_0,y)$, \[B_{\vis{d}_Y}(\zeta, e^{-\msf d_Y(y_0,y) - s}) \subset O_{R}^Y(y_0,y) \subset B_{\vis{d}_Y}(\zeta, e^{-\msf d_Y(y_0,y) + s})~.\]
\end{lemma}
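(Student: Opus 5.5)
The plan is to reduce both inclusions to Gromov product inequalities. Write $t=\msf d_Y(y_0,y)$. In a $\delta$-hyperbolic roughly geodesic space, the Gromov product extended to $\partial Y$ satisfies, for all $a,b,c\in Y\cup\partial Y$,
\[
(a,c)_{y_0}\geqslant \min\{(a,b)_{y_0},(b,c)_{y_0}\}-\delta'
\]
for some $\delta'$ depending only on $\delta$ (accounting for the ambiguity of the extension to the boundary and the rough geodesic constant). Recall also that $(\xi,y)_{y_0}\leqslant t+\delta'$ for every $\xi$. Fix $R_0$, $y$, and $\zeta\in O^Y_{R_0}(y_0,y)$, so that $(\zeta,y)_{y_0}\geqslant t-R_0$.

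\emph{First inclusion.} Let $\eta\in B_{\vis d_Y}(\zeta,e^{-t-s})$ with $\eta\neq\zeta$; the case $\eta=\zeta$ is trivial once $R\geqslant R_0$. Then $(\eta,\zeta)_{y_0}>t+s$. By the four-point inequality,
\[
(\eta,y)_{y_0}\geqslant\min\{(\eta,\zeta)_{y_0},(\zeta,y)_{y_0}\}-\delta'\geqslant\min\{t+s,\,t-R_0\}-\delta'=t-R_0-\delta'
\]
as long as $s\geqslant 0$. Hence $\eta\in O^Y_R(y_0,y)$ whenever $R\geqslant R_0+\delta'$.

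\emph{Second inclusion.} Let $\eta\in O^Y_R(y_0,y)$. Then $(\eta,y)_{y_0}\geqslant t-R$, and
\[
(\eta,\zeta)_{y_0}\geqslant\min\{(\eta,y)_{y_0},(y,\zeta)_{y_0}\}-\delta'\geqslant t-R-\delta',
\]
using $R\geqslant R_0$. This gives $\vis d_Y(\eta,\zeta)\leqslant e^{-t+R+\delta'}$, which is strictly less than $e^{-t+s}$ once $s>R+\delta'$.

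So I would take $R=R_0+\delta'$ and $s=R+\delta'+1$. No single step is a real obstacle. The only point needing care is the justification of the boundary version of the four-point inequality with an additive constant. This follows from the standard fact that the extension $(\xi,\eta)_{y_0}=\sup\liminf(x_n,y_m)_{y_0}$ differs from any sequential $\liminf$ by at most $2\delta$, as in \cite[Part~III, Remark~3.17]{bridson2013metric}. The roughly geodesic setting of \Cref{remark: gromov models} only changes the constant.
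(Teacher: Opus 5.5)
Your proposal is correct and is essentially the argument the paper has in mind: the paper gives no written proof, only invoking a ``standard hyperbolic geometry argument'' (cf.~\cite[Proposition~2.1]{BHM.harmonic}). Your derivation of both inclusions from the boundary four-point inequality $(a,c)_{y_0}\geqslant\min\{(a,b)_{y_0},(b,c)_{y_0}\}-\delta'$, with $R=R_0+\delta'$ and $s=R+\delta'+1$, is precisely that standard argument.
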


We also need the following comparison for shadows defined from different Gromov models.

\begin{lemma}\label{lemma: shadows comparison}
    Suppose $(\Gamma,\Pc)$ is a relatively hyperbolic group, $X,Y$ are two Gromov models of $(\Gamma,\Pc)$, and $x_0\in X$ and $y_0\in Y$ are fixed basepoints. Denote by $f:\partial X \to \partial Y$ the $\Gamma$-equivariant homeomorphism via the identifications of $\partial X$ and $\partial Y$ to $\partial(\Gamma,\Pc)$. Then
    \begin{enumerate}
        \item For any $r>0$ there exists $r'>0$ such that for every $\gamma\in \Gamma$ we have
        \[f(O_r^{X}(x_0,\gamma x_0))\subset O_{r'}^Y(y_0,\gamma y_0)~.\]
        \item For any $r'>0$ there exists $r>0$ such that for every $\gamma\in \Gamma$ we have
        \[O_{r'}^Y(y_0,\gamma y_0)\subset f(O_r^{X}(x_0,\gamma x_0))~.\]
    \end{enumerate} 
\end{lemma}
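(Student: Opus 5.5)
The plan is to reduce both inclusions to a statement about the compactification $\Gamma\cup\partial(\Gamma,\Pc)$. That compactification is intrinsic to the convergence action and does not depend on the Gromov model. We then conclude by a compactness argument. Since $X$ and $Y$ play symmetric roles, (2) follows from (1) by exchanging them and using $f^{-1}$, so I only describe (1).

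\emph{Step 1 (translate shadows).} By $\delta$-hyperbolicity and $\Gamma$-invariance of the Gromov product, the shadow $O_r^X(x_0,\gamma x_0)$ is coarsely equal to $\gamma\cdot\{\eta\in\partial X:\ (\gamma^{-1}x_0,\eta)_{x_0}\leqslant r\}$. The comparison is up to additive constants depending only on $\delta_X$, because
\[(\xi,\gamma x_0)_{x_0}=\msf d_X(x_0,\gamma x_0)-(\gamma^{-1}x_0,\gamma^{-1}\xi)_{x_0}.\]
This identity is exact for interior points and holds up to $O(\delta)$ for boundary points. The same holds in $Y$. Since $f$ is $\Gamma$-equivariant, (1) reduces to the following claim. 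For every $r$ there is $r'$ such that for all $\gamma\in\Gamma$ and $\eta\in\partial X$,
\[(\gamma^{-1}x_0,\eta)_{x_0}\leqslant r \quad\Longrightarrow\quad (\gamma^{-1}y_0,f(\eta))_{y_0}\leqslant r'.\]

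\emph{Step 2 (contradiction and compactness).} Suppose the claim fails for some $r$. Then there are $\gamma_n$ and $\eta_n$ with $(\gamma_n^{-1}x_0,\eta_n)_{x_0}\leqslant r$ but $(\gamma_n^{-1}y_0,f(\eta_n))_{y_0}\to\infty$.

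If $\{\gamma_n\}$ is finite, we get an immediate contradiction. Indeed, $(\gamma^{-1}y_0,\cdot)_{y_0}\leqslant \msf d_Y(y_0,\gamma^{-1}y_0)+O(\delta_Y)$ is bounded for each fixed $\gamma$.

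So we may pass to a subsequence with $\gamma_n^{-1}$ pairwise distinct. By properness of the actions, $\gamma_n^{-1}x_0\to p\in\partial X$ and $\gamma_n^{-1}y_0\to q\in\partial Y$. After a further subsequence, $\eta_n\to\eta\in\partial X$.

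In $X$, the bound $(\gamma_n^{-1}x_0,\eta_n)_{x_0}\leqslant r$ together with the standard continuity properties of the Gromov product on $X\cup\partial X$ (up to $2\delta$) forces $p\neq\eta$. In $Y$, the divergence $(\gamma_n^{-1}y_0,f(\eta_n))_{y_0}\to\infty$ forces $q=\lim f(\eta_n)=f(\eta)$, using continuity of $f$.

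\emph{Step 3 (independence of the compactification; the main obstacle).} We need $q=f(p)$, which then gives $f(p)=q=f(\eta)$ with $p\neq\eta$, contradicting injectivity of $f$. Equivalently, we need the following. If $\gamma_n x_0\to p$ in $X\cup\partial X$, then $\gamma_n y_0\to f(p)$ in $Y\cup\partial Y$.

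I would prove this via the convergence property, without any quasi-isometry between $X$ and $Y$. Pass to a subsequence so that $\gamma_n$ is a convergence sequence on $\partial(\Gamma,\Pc)$ with attracting point $a$ and repelling point $b$. That is, $\gamma_n\to a$ uniformly on compacta of $\partial(\Gamma,\Pc)\setminus\{b\}$. This property is purely topological, so it holds in both $\partial X$ and $\partial Y$ via $f$.

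In any proper-action Gromov model, the orbit limit $\lim\gamma_n x_0$ equals the attracting point $a$. To see this, use tautness (\Cref{remark: gromov models}): $x_0$ lies near a bi-infinite rough geodesic with endpoints $\zeta_1\neq\zeta_2$, and we may choose it so that at most one endpoint equals $b$. Then $\gamma_n x_0$ lies near the rough geodesic from $\gamma_n\zeta_1$ to $\gamma_n\zeta_2$. At least one of these endpoints tends to $a$, and $\gamma_n x_0$ escapes every compact set. A thin-triangle argument then shows that $\gamma_n x_0\to a$.

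Applying this in both $X$ and $Y$ gives $p=a$ and $q=f(a)$, which completes the contradiction. I expect Step 3 to need the most care. Specifically, one must check that the choice of rough geodesic avoiding $b$ can be made, possibly by replacing $x_0$ with a nearby point on another rough geodesic through its neighborhood. One must also handle the case where $b$ is a parabolic point, but the argument above does not actually use whether $b$ is conical or parabolic.
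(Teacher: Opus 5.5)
Steps~1 and~2 are sound, and your route differs from the paper's. The paper simply cites Blayac--Canary--Zhu--Zimmer: shadows in $\partial X$ and $\partial Y$ are compared with balls for compatible metrics, and the conclusion comes from uniqueness of the compactifying topology on $\Gamma\sqcup\partial(\Gamma,\Pc)$. Your Step~3 is a by-hand, sequential version of exactly that uniqueness statement, so it carries all the weight. As written, its argument does not work. The requirement ``at most one endpoint equals $b$'' is vacuous, since $\zeta_1\neq\zeta_2$. The inference ``one endpoint of $\gamma_n\ell$ tends to $a$ and $\gamma_n x_0$ escapes, hence $\gamma_n x_0\to a$'' is false as a geometric statement. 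If $\zeta_2=b$, the points $\gamma_n b$ can converge to some $c\neq a$; for instance, with $\gamma_n=g^n$ for a loxodromic $g$ with repelling point $b$, one has $\gamma_n b=b$. The lines $\gamma_n\ell$ then accumulate on a line from $a$ to $c$. A sequence escaping to infinity near these lines may go to either end, and thin triangles cannot decide which.

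To repair this you need \emph{both} endpoints different from $b$. Then $(\gamma_n\zeta_1,\gamma_n\zeta_2)_{x_0}\to\infty$, so every point within bounded distance of $\gamma_n\ell$ converges to $a$; in particular $\gamma_n x_0\to a$. This can be arranged. If $\zeta_2=b$, replace it by some $\zeta_3\neq b$ so close to $b$ that the side $(b,\zeta_3)$ of the ideal triangle $(\zeta_1,b,\zeta_3)$ is far from $x_0$. Thinness then puts $x_0$ near $(\zeta_1,\zeta_3)$; this uses that $b$ is not isolated in the boundary.

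A simpler fix drops tautness and convergence-group language and reuses your Step~1 identity. Take $g_n\to\infty$ with $g_nx_0\to p$ and $g_n^{-1}x_0\to p'$. Then $(g_n\xi,g_nx_0)_{x_0}=\msf d_X(x_0,g_nx_0)-(\xi,g_n^{-1}x_0)_{x_0}+O(\delta_X)$, so $g_n\xi\to p$ for every $\xi\neq p'$. Do the same in $Y$, with $g_ny_0\to q$ and $g_n^{-1}y_0\to q'$. Pick $\zeta\in\partial Y\setminus\{q',f(p')\}$, which exists once the boundary has at least three points. Continuity and equivariance of $f$ then give $q=\lim g_n\zeta=f(p)$. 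With this fix your proof is complete and self-contained, whereas the paper outsources precisely this step to the cited uniqueness of the compactifying topology.
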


\begin{proof}
    This essentially follows from \cite[Proposition~5.5]{blayac2024patterson} as we can compare the shadows in both $\partial X$ and $\partial Y$ with balls using compatible metrics in the sense of \cite[Definition~2.2]{blayac2024patterson}. Then the lemma follows from the uniqueness of the compactifying topology \cite[Proposition~2.3]{blayac2024patterson}. 
\end{proof}

\subsection{Notations on Lie groups}\label{section: Lie groups}

We mostly follow the same setting on Lie theory as in \cite[Section~2, Section~5]{kim2025relatively} and we keep this notation throughout the paper.

Let $\msf G$ be a connected semisimple real algebraic group with finite center and identity element $e$. Fix a Cartan decomposition $\msf G=\msf K\exp(\fraka^+)\msf K$, where $\msf K<\msf G$ is a maximal compact subgroup and $\fraka^+\subset \fraka$ is a closed positive Weyl chamber. We denote the Cartan projection by $\mu:\msf G\to \fraka^+$. Let $\Pi$ be the set of simple roots determined by $\fraka^+$. Given a non-empty subset $\theta\subset \Pi$, the associated \emph{$\theta$-Cartan space} is the subspace \[\fraka_\theta=\bigcap_{\alpha\in \Pi\setminus \theta}\ker \alpha~.\] We write $p_\theta:\fraka\to\fraka_\theta$ for the natural projection, and for $g\in \msf G$ we denote $\mu_\theta(g)=p_\theta(\mu(g))$. Let $\msf  P< \msf G$ be the standard minimal parabolic subgroup corresponding to $\Pi$ and $\Fc = \msf G/\msf P$ be the flag manifold. Similarly, let $\msf P_\theta<\msf G$ be the standard parabolic subgroup corresponding to $\theta$ with $\theta$-flag manifold  $\Fc_\theta=\msf G/\msf P_\theta$. We also let $\pi_\theta: \Fc \to \Fc_\theta$ be the canonical projection.

We write $i=-\Ad(w_0)$ for the opposition involution, where $w_0$, from the normalizer of $\exp(\fraka)$, is a representative of the longest Weyl element. We say that two flags $\xi \in \Fc_\theta$ and $\eta\in \Fc_{i(\theta)}$ are \emph{transverse} if there exists $g\in \msf G$ such that $\xi = g \msf P_\theta$ and $\eta = g w_0 \msf P_{i(\theta)}$. We denote \[\Fc_\theta^{(2)} = \{(\xi,\eta)\in \Fc_\theta\times \Fc_{i(\theta)}: \xi \text{ and } \eta \text{ are transverse}\}~.\] 

Given $\xi_0\in \Fc$, let $k\in \msf K$ satisfy $\xi_0 = k \msf P$, and for $g\in \msf G$ we set $\sigma(g^{-1},\xi_0) \in \fraka$ to be the unique element such that $g^{-1}k \in \msf K \exp(\sigma(g^{-1},\xi_0)) \msf N$ via the Iwasawa decomposition $\msf G = \msf K \msf P = \msf K \exp(\fraka) \msf N$. Define the \emph{$\fraka_\theta$-valued Busemann function} $\beta^\theta:\Fc_\theta \times \msf G \times \msf G\to \fraka_\theta$ by \[\beta_\xi^\theta(g,h) = p_\theta(\sigma(g^{-1},\xi_0) - \sigma(h^{-1},\xi_0)) \quad \text{ for all } \xi\in \Fc_\theta,\ g,h\in \msf G~,\] where $\xi_0\in \pi_\theta^{-1}(\xi)$. By \cite[Lemma~6.1]{quint2002mesures}, the value $\beta_\xi^\theta(g,h)$ is independent of the choice of $\xi_0$. The \emph{$\fraka_\theta$-valued Gromov product} $G^\theta:\Fc_\theta^{(2)}\to \fraka_\theta$ is defined by \[G^\theta(\xi,\eta) = \frac{1}{2}(\beta_\xi^\theta(e,g)+i(\beta_\eta^{i(\theta)}(e,g)))\] where $g\in \msf G$ is proximal and such that $(g^+, g^-) = (\xi, \eta)$, and by \cite[Lemma~9.13]{kim2025properly}, $G^\theta(\xi,\eta)$ is independent of the choice of $g$. Here $g^+$ (respectively, $g^-$) denotes the attracting (respectively, repelling) fixed flag of $g$ in $\Fc_\theta$ (respectively, in $\Fc_{i(\theta)}$).

The next lemma follows easily from the $\msf G$-invariant property of $\beta^\theta$.

\begin{lemma}[{\cite[Lemma~5.2]{dey2024ahlfors}}]\label{lemma: Gromov product conformal identity}
    For any $g\in \msf G$ and $(\xi,\eta) \in \Fc_\theta^{(2)}$ we have 
    \[G^\theta(\xi,\eta) = G^\theta(g^{-1}\xi,g^{-1}\eta) + \frac{1}{2}(\beta^\theta_\xi(e,g)+ i(\beta^{i(\theta)}_\eta(e,g)))~.\]
\end{lemma}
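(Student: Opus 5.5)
The plan is to reduce the identity to the cocycle property of the Busemann function together with its $\msf G$-invariance. First I would record the two basic properties of $\beta^\theta$ that follow from the Iwasawa cocycle $\sigma$: the cocycle identity $\beta^\theta_\xi(g_1,g_3)=\beta^\theta_\xi(g_1,g_2)+\beta^\theta_\xi(g_2,g_3)$, and the invariance $\beta^\theta_{h\xi}(hg_1,hg_2)=\beta^\theta_\xi(g_1,g_2)$ for all $h\in\msf G$. The cocycle identity is immediate from the definition as a difference $p_\theta(\sigma(g_1^{-1},\xi_0)-\sigma(g_2^{-1},\xi_0))$. The invariance follows from the cocycle relation $\sigma(ab,\xi_0)=\sigma(a,b\xi_0)+\sigma(b,\xi_0)$ of the Iwasawa cocycle, applied with $a=g_j^{-1}$ and $b=h^{-1}$. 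The same properties hold for $\beta^{i(\theta)}$.

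Next, I would choose a proximal $h\in\msf G$ with $(h^+,h^-)=(g^{-1}\xi,g^{-1}\eta)$. This is possible because the pair is transverse, $\msf G$ acts transitively on transverse pairs, and proximal elements exist. Then $ghg^{-1}$ is proximal with $(ghg^{-1})^{\pm}=(\xi,\eta)$. By the independence of $G^\theta$ from the choice of proximal element, we have
\[G^\theta(\xi,\eta)=\tfrac12\bigl(\beta^\theta_\xi(e,ghg^{-1})+i(\beta^{i(\theta)}_\eta(e,ghg^{-1}))\bigr).\]
Apply the cocycle identity through the intermediate points $g$ and $gh$:
\[\beta^\theta_\xi(e,ghg^{-1})=\beta^\theta_\xi(e,g)+\beta^\theta_\xi(g,gh)+\beta^\theta_\xi(gh,ghg^{-1}).\]
By invariance, the middle term equals $\beta^\theta_{g^{-1}\xi}(e,h)$, and the last term equals $\beta^\theta_{h^{-1}g^{-1}\xi}(e,g^{-1})=\beta^\theta_{g^{-1}\xi}(e,g^{-1})$, since $h$ fixes $g^{-1}\xi$. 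Again by invariance and the cocycle identity, $\beta^\theta_{g^{-1}\xi}(e,g^{-1})=\beta^\theta_\xi(g,e)=-\beta^\theta_\xi(e,g)$. Hence $\beta^\theta_\xi(e,ghg^{-1})=\beta^\theta_{g^{-1}\xi}(e,h)$. The same computation gives the analogous identity for $\eta$ with $i(\theta)$, so $G^\theta(\xi,\eta)=G^\theta(g^{-1}\xi,g^{-1}\eta)$ exactly.

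That exact invariance looks inconsistent with the stated formula, which has the extra term $\tfrac12(\beta^\theta_\xi(e,g)+i(\beta^{i(\theta)}_\eta(e,g)))$. So the main obstacle is a normalization issue. Presumably the intended definition of $G^\theta$ is not conjugation-invariant; for instance, it might be taken relative to the basepoint $e\msf K$, in the style of $\beta_\xi(e,g)$ with $g\msf K$ on a "geodesic" joining $\eta$ to $\xi$. Under that reading the proof should go as follows. Fix a representative $\tilde g$ with $\tilde g\msf P_\theta=\xi$ and $\tilde g w_0\msf P_{i(\theta)}=\eta$, and take the definition to be $G^\theta(\xi,\eta)=\tfrac12(\beta^\theta_\xi(e,\tilde g)+i\beta^{i(\theta)}_\eta(e,\tilde g))$, which is well defined up to the stabilizer $\tilde g\msf L\tilde g^{-1}$. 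Then $g^{-1}\tilde g$ represents the pair $(g^{-1}\xi,g^{-1}\eta)$. Using invariance, $\beta^\theta_{g^{-1}\xi}(e,g^{-1}\tilde g)=\beta^\theta_\xi(g,\tilde g)=\beta^\theta_\xi(e,\tilde g)-\beta^\theta_\xi(e,g)$. Adding this to the analogous $\eta$-term and rearranging gives exactly the claimed formula. I would therefore present the argument with the basepoint-dependent reading and verify that the choice of $\tilde g$ cancels, which is the step needing the most care.
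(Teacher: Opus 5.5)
Your final argument is correct and is what the paper intends by ``follows easily from the $\msf G$-invariant property of $\beta^\theta$'': if $\tilde g$ carries $(\msf P_\theta, w_0\msf P_{i(\theta)})$ to $(\xi,\eta)$, then $g^{-1}\tilde g$ represents $(g^{-1}\xi,g^{-1}\eta)$, and invariance plus the cocycle identity produce the extra term, while independence of the representative is exactly the Kim--Oh--Wang lemma the paper cites, so the step you flagged needs no further work. Your diagnosis of the definition is also right, and the situation is worse than you say: the paper's ``proximal $g$ with $(g^+,g^-)=(\xi,\eta)$'' must be read in Kim--Oh--Wang's notation $g^+=g\msf P_\theta$, $g^-=gw_0\msf P_{i(\theta)}$, because under the literal attracting/repelling reading one has $\beta^\theta_{g^+}(e,g)=\lambda_\theta(g)=-i(\beta^{i(\theta)}_{g^-}(e,g))$ (with $\lambda_\theta$ the $\theta$-Jordan projection), so $G^\theta$ would vanish identically and the lemma would fail.
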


Let $o' = \msf K \in \msf G/ \msf K$ be the fixed basepoint. We define the $R$-\emph{shadow} of $q\in \msf G/\msf K$ viewed from $o'$ by \[O^\theta_R(o',q) = \{k \msf P_\theta\in \Fc_\theta: k\in \msf K,\ k \exp(\fraka^+)o' \cap B(q,R) \ne \emptyset\}\subset \Fc_\theta~,\]
where $B$ denotes the closed ball with respect to the $\msf G$-invariant Riemannian metric on $\msf G/\msf K$.

\subsection{Relative Anosov groups}\label{section: preliminary on relative anosov}

Suppose $\Gamma < \msf G$ is a discrete subgroup. We say that $\Gamma$ is \emph{$\theta$-divergent} if for each $\al\in \theta$ we have \[\alpha(\mu(\gamma)) \to +\infty \quad \text{as} \quad \gamma \to \infty~.\] In this case we denote by $\Lambda_\theta$ its $\theta$\emph{-limit set}, defined as the set of all the accumulation points of $\Gamma$ on $\Fc_\theta$.

Suppose $\Pc$ is a collection of subgroups of $\Gamma$ such that $(\Gamma,\Pc)$ is relatively hyperbolic. Following \cite{kapovich2018relativizing} and \cite{zhu2022relatively}, we say that $\Gamma$ is a \emph{$\theta$-Anosov group relative to} $\Pc$, or simply a \emph{relatively $\theta$-Anosov group}, if it is $\theta$-divergent (and hence $i(\theta)$-divergent), $\Lambda_{\theta\cup i(\theta)}$ is transverse, meaning that any two distinct flags in $\Lambda_{\theta\cup i(\theta)}$ are transverse, and there exists a $\Gamma$-equivariant homeomorphism $\partial(\Gamma,\Pc) \to \Lambda_{\theta\cup i(\theta)}$, called the \emph{limit map} of $\Gamma$. Such a limit map is uniquely characterized by the relative Anosov property. We denote by $f:\partial(\Gamma,\Pc) \to \Lambda_\theta$ (respectively $f_i:\partial(\Gamma,\Pc) \to \Lambda_{i(\theta)}$) the composition of the limit map and the canonical projection $\Lambda_{\theta\cup i(\theta)} \to \Lambda_\theta$ (respectively, $\Lambda_{\theta\cup i(\theta)} \to \Lambda_{i(\theta)}$).

The \emph{$\theta$-limit cone} $\Lc_\theta(\Gamma) \subset \fraka_\theta$, introduced by Benoist \cite{benoist1997proprietes}, is the closed cone generated by the asymptotic directions of $\mu_\theta(\Gamma)$. We will need the following lemma.

\begin{lemma}\label{lemma: comparison positive linear forms}
    Let $\psi,\psi' \in \fraka_\theta^\ast$ be two linear forms which are positive on $\Lc_\theta(\Gamma)\setminus\{0\}$. Then there exist constants $A\geqslant 1$ and $B\geqslant 0$ such that \[\psi(\mu_\theta(\gamma))\leqslant A\  \psi'(\mu_\theta(\gamma))+B\quad \text{ for all } \gamma\in \Gamma~.\]
\end{lemma}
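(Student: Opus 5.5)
The plan is to argue by compactness on a cross-section of the limit cone. Let $S$ be the unit sphere in $\fraka_\theta$ for some fixed norm, and set $S_\Gamma=\Lc_\theta(\Gamma)\cap S$. Since $\Lc_\theta(\Gamma)$ is a closed cone, $S_\Gamma$ is compact. Both $\psi$ and $\psi'$ are continuous and strictly positive on $S_\Gamma$.

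Because $S_\Gamma$ is compact, there are $c>0$ and $M<\infty$ with $\psi'\geqslant 2c$ and $\psi\leqslant M$ on $S_\Gamma$. By continuity these bounds extend, with slightly weaker constants, to an open neighborhood of $S_\Gamma$ in $S$. Concretely, there is $\epsilon>0$ such that every unit vector $v$ within distance $\epsilon$ of $S_\Gamma$ satisfies $\psi'(v)\geqslant c$ and $\psi(v)\leqslant M$. By homogeneity, any nonzero $x\in\fraka_\theta$ whose direction $x/\norm{x}$ is $\epsilon$-close to $S_\Gamma$ then satisfies
\[\psi(x)\leqslant (M/c)\,\psi'(x)~.\]

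The main step is to show that only finitely many $\gamma$ have direction $\mu_\theta(\gamma)/\norm{\mu_\theta(\gamma)}$ outside the $\epsilon$-neighborhood of $S_\Gamma$. Suppose instead there were infinitely many such $\gamma$. Since $\Gamma$ is discrete and the Cartan projection is proper, $\norm{\mu(\gamma)}\to\infty$ along this sequence. We also need $\norm{\mu_\theta(\gamma)}\to\infty$. For this we use $\theta$-divergence: $\alpha(\mu(\gamma))\to\infty$ for each $\alpha\in\theta$. These roots factor through $p_\theta$, so $\alpha(\mu(\gamma))=\alpha(\mu_\theta(\gamma))$, and hence $\norm{\mu_\theta(\gamma)}\to\infty$.

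Passing to a subsequence, the directions converge to a unit vector that lies at distance at least $\epsilon$ from $S_\Gamma$. On the other hand, a limit of directions $\mu_\theta(\gamma_n)/\norm{\mu_\theta(\gamma_n)}$ with $\norm{\mu_\theta(\gamma_n)}\to\infty$ is by definition an asymptotic direction of $\mu_\theta(\Gamma)$, so it belongs to $\Lc_\theta(\Gamma)\cap S=S_\Gamma$. This is a contradiction.

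Let $F$ be the finite set of exceptional $\gamma$. Set $A=\max(1,M/c)$ and
\[B=\max_{\gamma\in F}\abs{\psi(\mu_\theta(\gamma))-A\,\psi'(\mu_\theta(\gamma))}~,\]
with $\mu_\theta(\gamma)=0$ allowed for $\gamma\in F$.

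It remains to check the inequality on $\Gamma\setminus F$, where it reads $\psi\leqslant A\psi'$ at $\mu_\theta(\gamma)$. When $M/c\geqslant 1$, this is exactly the homogeneous estimate above. When $M/c<1$, it still holds because $\psi'(\mu_\theta(\gamma))>0$ there, so $(M/c)\psi'\leqslant\psi'$. This proves the lemma.

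The only delicate point is the finiteness step. It rests on two facts: $\norm{\mu_\theta(\gamma)}\to\infty$ along any infinite sequence, which uses $\theta$-divergence, and the definition of the limit cone as the set of asymptotic directions.
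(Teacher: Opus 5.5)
Your argument follows the paper's proof: bound $\psi/\psi'$ on a compact neighborhood of $\Lc_\theta(\Gamma)\cap S$ in the unit sphere, pass to the cone over it by homogeneity, note that all but finitely many $\mu_\theta(\gamma)$ lie in that cone, and absorb the exceptions into $B$. You are right that the finiteness step needs $\norm{\mu_\theta(\gamma)}\to\infty$; the paper leaves this implicit in ``by the definition of $\Lc_\theta(\Gamma)$''.

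However, your justification of that step is false as written. With the paper's conventions, $p_\theta$ is the projection onto $\fraka_\theta$ that is invariant under the Weyl group elements fixing $\fraka_\theta$ pointwise. Roots $\alpha\in\theta$ do not factor through it in general. For example, in $\SL(3,\R)$ with $\theta=\{\alpha_1\}$ one has $p_\theta(\mathrm{diag}(a_1,a_2,a_3))=\mathrm{diag}(a_1,-a_1/2,-a_1/2)$. So $H=\mathrm{diag}(1,0,-1)$ gives $\alpha_1(H)=1$ but $\alpha_1(p_\theta(H))=3/2$.

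There are two easy repairs, and the rest of your proof is fine once either is made.

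\begin{enumerate}
\item Use the fundamental weights $\omega_\alpha$, $\alpha\in\theta$, instead of the roots. They do factor through $p_\theta$. Moreover, $\omega_\alpha$ is a nonnegative combination of simple roots with coefficient $c_\alpha>0$ on $\alpha$, so $\omega_\alpha\geqslant c_\alpha\,\alpha$ on $\fraka^+$. Then $\theta$-divergence gives $\omega_\alpha(\mu_\theta(\gamma))=\omega_\alpha(\mu(\gamma))\to\infty$, hence $\norm{\mu_\theta(\gamma)}\to\infty$.
\item Avoid divergence altogether. The definition of the limit cone yields $R>0$ such that every $\gamma$ with $\norm{\mu_\theta(\gamma)}>R$ has direction within $\epsilon$ of $\Lc_\theta(\Gamma)\cap S$. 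For the remaining $\gamma$ one has $\psi(\mu_\theta(\gamma))-A\,\psi'(\mu_\theta(\gamma))\leqslant(\norm{\psi}+A\norm{\psi'})R$, however many of them there are.
\end{enumerate}
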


\begin{proof}
    We fix a norm on $\fraka_\theta$ and let $S\subset \fraka_\theta$ be the unit sphere with respect to this norm. For any $U \subset S$, we denote by $\Cc(U)$ the cone that projects to $U$. We now pick $U$ to be a compact neighborhood $\wt \Lc$ of $\Lc_\theta(\Gamma) \cap S$ that contains on the interior of $\fraka_\theta^+$ and such that $\psi$ and $\psi'$ are positive on $\Cc(U)$. By the compactness of $U$, there exists a constant $A >0$ such that $\psi(u)\leqslant A \psi'(u)$ for any $u \in U$ and hence for any $u\in \Cc(U)$. By the definition of $\Lc_\theta(\Gamma)$, $\mu_\theta(\gamma)$ is contained in $\wt \Lc$ for all but finitely many $\gamma \in \Gamma$. For the remaining finitely many elements outside $\Cc(U)$, set $B$ to be the maximum of their $\psi(\mu_\theta(\cdot)) - A \psi'(\mu_\theta(\cdot))$ values, which completes the proof.
\end{proof}

\subsection{Patterson--Sullivan measures}\label{section: Patterson--Sullivan measures}

Suppose $\Gamma< \msf G$ is a $\theta$-divergent subgroup. The following $\theta$-growth indicator is introduced in \cite{kim2025properly}, generalizing that from \cite{quint2002divergence}. Let $\norm{\cdot}$ be a fixed norm on $\fraka_\theta$. For an open cone $\Cc\subset \fraka_\theta$, define \[h_\Gamma^\theta(\Cc)=\limsup_{T\to\infty} \frac{1}{T} \log \#\{\gamma\in\Gamma:\mu_\theta(\gamma)\in \Cc,\ \norm{\mu_\theta(\gamma)}\leqslant T\}~.\] The \emph{$\theta$-growth indicator} of $\Gamma$ is the homogeneous function given by \[\psi_\Gamma^\theta:\fraka_\theta\to \Rb\cup\{-\infty\}~, \quad v \mapsto \norm{v} \inf_{\Cc\ni v} h_\Gamma^\theta(\Cc)~,\] for which we also set $\psi_\Gamma^\theta(0)=0$. The function $\psi_\Gamma^\theta$ is homogeneous and concave on $\Lc_\theta(\Gamma)$.

\begin{definition}
    We say a linear form $\psi\in \fraka_\theta^\ast$ is
    \begin{enumerate}
        \item $(\Gamma,\theta)$-\emph{proper}, if for any $C>0$ there are at most finitely many $\gamma\in \Gamma$ satisfying $\psi(\mu_\theta(\gamma))\leqslant C$.
        \item \emph{positive}, if it is positive on $\Lc_\theta(\Gamma)\setminus \{0\}$;
        \item \emph{tangent to the $\theta$-growth indicator of} $\Gamma$, if $\psi(\vv)\geqslant \psi_\Gamma^\theta(\vv)$ for any $\vv\in \fraka_\theta$ and there exists some non-zero $\vv_0\in \Lc_\theta(\Gamma)$ such that $\psi(\vv_0)=\psi_\Gamma^\theta(\vv_0)$.
    \end{enumerate}
\end{definition}

For $\psi\in \fraka_\theta^\ast$, define the $\psi$-Poincar\'e series \[Q^\psi_\Gamma(s) = \sum_{\gamma\in\Gamma} e^{-s\psi(\mu_\theta(\gamma))}\] and the corresponding \emph{critical exponent} \[\delta_\psi(\Gamma)=\inf\{s>0:Q^\psi_\Gamma(s)<+\infty\}~.\] With the normalization above, $\psi$ is tangent to $\psi_\Gamma^\theta$ if and only if $\delta_\psi(\Gamma)=1$ \cite[Corollary~4.6]{kim2025properly}.

\begin{definition}\label{definition: Patterson--Sullivan measures}
    Given $\psi \in \fraka_\theta^\ast$, a \emph{$(\Gamma,\psi)$-conformal measure} is a Borel probability measure $\nu$ on $\Fc_\theta$ such that \[\frac{\de \gam_\ast \nu}{\de \nu}(\xi)=e^{\psi(\beta_\xi^\theta(e,\gamma))} \quad \text{ for all } \gamma \in \Gamma \text{ and } \nu \text{-almost every } \xi\in \Fc_\theta~.\] A \emph{$(\Gamma,\psi)$-Patterson--Sullivan} measure is a $(\Gamma, \psi)$-conformal measure supported on $\Lam_\theta$.
\end{definition}

Now we suppose that $\Gamma<\msf G$ is a $\theta$-Anosov group relative to a collection $\Pc$ of subgroups.

\begin{theorem}[{\cite[Theorem~1.1, Corollary~1.5]{canary2025patterson}}]\label{theorem: PS existence uniqueness ergodicity}  Let $\psi\in \fraka_\theta^\ast$ be $(\Gamma,\theta)$-proper, positive, and tangent to the $\theta$-growth indicator of $\Gamma$. Then the following hold.
    \begin{enumerate}
        \item The $\psi$-Poincar\'e series diverges at its critical exponent, i.e., $Q_\Gamma^\psi(1) = +\infty$.
        \item There exists a unique $(\Gamma,\psi)$-Patterson--Sullivan probability measure on $\Lambda_\theta$. We denote it by $\nu_\psi$.
    \end{enumerate}
\end{theorem}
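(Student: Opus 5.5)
The plan is to follow the rank one strategy for geometrically finite groups (Sullivan, Dal'bo--Otal--Peigné), using the relatively hyperbolic structure of $(\Gamma,\Pc)$ and the limit map $f:\partial(\Gamma,\Pc)\to\Lambda_\theta$ to split $\Lambda_\theta$ into conical limit points and bounded parabolic points. First, existence of some $(\Gamma,\psi)$-Patterson--Sullivan measure comes from Patterson's construction. Take weak$^\ast$ limits as $s\searrow 1$ of the normalized measures $\sum_\gamma h(\psi(\mu_\theta(\gamma)))e^{-s\psi(\mu_\theta(\gamma))}\mathcal{D}_{\gamma\msf P_\theta}$. Here $h$ is Patterson's slowly varying correction, which is needed only if the series converges at $1$. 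By $\theta$-divergence, the orbit points accumulate only on $\Lambda_\theta$, and the usual Busemann function estimate $\psi(\mu_\theta(\gamma^{-1}g))-\psi(\mu_\theta(g))\approx \psi(\beta^\theta_{\xi}(\gamma,e))$ for $g\msf P_\theta\to\xi$ yields the conformality relation in \Cref{definition: Patterson--Sullivan measures}. Positivity and properness of $\psi$ make the Poincaré series well defined with critical exponent $1$ (tangency).

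Second, I would prove a shadow lemma: $\nu(O^\theta_R(o',\gamma o'))\asymp e^{-\psi(\mu_\theta(\gamma))}$ for any $(\Gamma,\psi)$-Patterson--Sullivan measure $\nu$ and $R$ large. The upper bound is standard. The lower bound needs $\nu$ not to be a single atom and uses transversality of $\Lambda_{\theta\cup i(\theta)}$. The cusps are then treated separately. For each $P\in\Pc$, I would show the critical gap $\delta_\psi(P)<\delta_\psi(\Gamma)=1$. A natural route is that $\Gamma$ contains a free product $P\ast\langle\gamma\rangle$-type subgroup obtained by ping-pong on $\Fc_\theta$ with a loxodromic $\gamma$, which forces a strict gap once the $P$-series diverges at $\delta_\psi(P)$. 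If it converges, one instead gets $\sum_{p\in P}e^{-\psi(\mu_\theta(p))}<\infty$ directly. Combined with the shadow estimates near a parabolic fixed point $\xi_P$, this gives $\nu(\{\xi_P\})=0$. Hence $\nu$ is carried by conical limit points.

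Third, conical points lie in infinitely many shadows $O^\theta_R(o',\gamma o')$, so full conical measure plus the shadow lemma and a Borel--Cantelli/Hopf--Tsuji--Sullivan argument forces $Q^\psi_\Gamma(1)=+\infty$, which is item (1). Divergence then makes Patterson's correction unnecessary. For uniqueness in item (2), I would show ergodicity: if $\nu,\nu'$ are two such measures, both are carried by conical points, and by the shadow lemma they are mutually absolutely continuous with Radon--Nikodym derivative bounded above and below. A Lebesgue density argument along conical shadows shows this derivative is $\Gamma$-invariant, and hence essentially constant.

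The main obstacle I expect is the cusp analysis in the second step, namely the critical gap for parabolic subgroups and the resulting vanishing of atoms. There the higher rank geometry enters: one must compare $\psi(\mu_\theta(\gamma p))$ with $\psi(\mu_\theta(\gamma))+\psi(\mu_\theta(p))$ up to bounded error for elements whose shadows are nested near $\xi_P$. I would try to get this from the relative Anosov coarse additivity along rays into cusps, via the orbit quasi-isometry with $X_{\GM}$ together with \Cref{lemma: comparison positive linear forms}.
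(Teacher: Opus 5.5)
The paper does not prove this statement; it quotes it from Canary--Zhang--Zimmer \cite[Theorem~1.1, Corollary~1.5]{canary2025patterson}. The one ingredient of their argument it invokes explicitly elsewhere (in the proof of \Cref{lemma: finiteness and ergodicity}, via \cite[Corollary~7.2]{canary2025patterson}) is the \emph{strict} critical gap $\delta_\psi(P)<\delta_\psi(\Gamma)=1$ for every $P\in\Pc$. Your outline follows the Dal'bo--Otal--Peign\'e scheme, which is the right architecture, but the cusp step has a genuine gap.

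In the branch where $Q^\psi_P$ converges at $\delta_\psi(P)=1$, you only know $\sum_{p\in P}e^{-\psi(\mu_\theta(p))}<\infty$. This does not give $\nu(\{\xi_P\})=0$. If $Q^\psi_\Gamma(1)<\infty$, which is exactly what you are trying to exclude, the Patterson measure must be built with the slowly varying correction $h$. Write elements near $\xi_P$ as $p\gamma$ with $\gamma$ a minimal coset representative. The correction then costs a factor $e^{\epsilon\,\psi(\mu_\theta(p))}$, so the mass near $\xi_P$ is only bounded by a tail of $\sum_{p\in P}e^{-(1-\epsilon)\psi(\mu_\theta(p))}$. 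That tail is finite only under a strict gap.

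This is not a removable technicality. Dal'bo--Otal--Peign\'e constructed geometrically finite groups in pinched negative curvature with a parabolic subgroup of convergence type, $\delta_P=\delta_\Gamma$, and $\Gamma$ itself of convergence type. There the Patterson measure is purely atomic on parabolic points. Yet relative hyperbolicity, Sullivan's shadow lemma, ping-pong with loxodromics and comparison with a cusped space are all available in that setting. So the convergent branch of your dichotomy cannot be closed with the tools you list, and your third step (conical support $\Rightarrow$ divergence) has nothing to start from.

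The missing ingredient is an unconditional proof of $\delta_\psi(P)<1$. The natural route through your ping-pong argument is to show that $Q^\psi_P$ diverges at $\delta_\psi(P)$. That needs real information about the peripheral subgroups of a relatively Anosov group and about $p\mapsto\psi(\mu_\theta(p))$ on them. The multiplicative bounds from the orbit quasi-isometry with $X_{\GM}$ and \Cref{lemma: comparison positive linear forms} are not sharp enough, since they cannot detect divergence at the critical exponent. Supplying this gap is the substance of \cite{canary2025patterson}.

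Two smaller points concern the uniqueness step.
\begin{enumerate}
\item You use that \emph{every} $(\Gamma,\psi)$-Patterson--Sullivan measure is carried by conical points. For an arbitrary conformal measure this only follows once divergence is known. With $Q^\psi_\Gamma(1)=\infty$, $\sum_{p\in P}e^{-\psi(\mu_\theta(p))}<\infty$, and coarse additivity for minimal coset representatives, the series over $\Gamma/P$ diverges, and this rules out an atom at $\xi_P$. This is where the additivity you flag as the main obstacle is actually needed.
\item The Radon--Nikodym derivative of two such measures is automatically $\Gamma$-invariant, since they share the same cocycle. What the density argument along conical shadows really provides is ergodicity, and that is what makes the derivative constant.
\end{enumerate}
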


For $\psi$ as in the previous theorem, we denote by $\wh \nu_\psi$ the unique $(\Gamma,\psi\circ i)$-Patterson--Sullivan probability measure on $\Lambda_{i(\theta)}$.

In what follows, we also need the following shadow comparisons. Recall that $f:\partial X_{\GM}\ra \Fc_\theta$ denotes the limit map corresponding to a Groves--Manning cusped space, and for a fixed basepoint $o\in X_{\GM}$ we denote $O_R^{\GM}(o,x)=O_R^{X_{\GM}}(o,x)$.

\begin{lemma}[{\cite[Proposition~5.7]{kim2025relatively}}]\label{lemma: the comparison for relatively Anosov}
    For any sufficiently large $R > 0$, there exist $r_1,r_2>0$ such that for every $\gamma\in\Gamma$, we have \[O^\theta_{r_1}(o',\gamma o')\cap \Lambda_\theta \subset f(O^{\GM}_R(o,\gamma o)) \subset O^\theta_{r_2}(o',\gamma o')\cap \Lambda_\theta~.\] Moreover, we can choose $r_1\to\infty$ as $R\to\infty$.
\end{lemma}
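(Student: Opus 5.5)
This statement is quoted from \cite[Proposition~5.7]{kim2025relatively}, so we can take it as given. If we wanted to reprove it, the plan would be a two-sided comparison between the Groves--Manning shadow $O^{\GM}_R(o,\gamma o)$ and the Lie-theoretic shadow $O^\theta_r(o',\gamma o')$. The comparison would be carried out through geodesic rays in $X_{\GM}$ and their images under the orbit map $\gamma o\mapsto \gamma o'$. By $\Gamma$-equivariance of the limit map $f$ and of both shadow families, it suffices to control how rays from $o$ to points $\xi\in\partial X_{\GM}$ track their images in $\msf G/\msf K$.

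\textbf{First inclusion, $f(O^{\GM}_R)\subset O^\theta_{r_2}$.} Let $\xi\in O^{\GM}_R(o,\gamma o)$. Then a geodesic ray from $o$ to $\xi$ passes within a bounded distance (depending on $R$) of $\gamma o$. Write it through group elements $\gamma_n\to\xi$, passing near $\gamma$, with $\gamma_n = \gamma\eta_n$ and $\eta_n$ a ray-sequence from $o$ to $\gamma^{-1}\xi$.

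We use the relatively Anosov property in its uniform form: orbit images of GM-geodesics are uniformly close to $\theta$-cones or diamonds, as in Zhu--Zimmer's relatively uniformly regular, or asymptotically embedded, estimates. This gives that $\gamma o'$ lies within bounded distance $r_2$ of the Weyl cone from $o'$ toward $f(\xi)$. Concretely, we would show $\mu_\theta(\gamma\eta_n)-\mu_\theta(\gamma)-\mu_\theta(\eta_n)$ is bounded along such rays. We would also show that the $\theta$-flag of $\gamma_n$ converges to $f(\xi)$, with Cartan attractors close to the ray $k\exp(\fraka^+)o'$ where $k\msf P_\theta=f(\xi)$.

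\textbf{Second inclusion, $O^\theta_{r_1}\cap\Lambda_\theta\subset f(O^{\GM}_R)$.} Argue by contradiction and compactness. Suppose there are $\gamma_j$ and $\xi_j=f(\zeta_j)$ in $O^\theta_{r_1}(o',\gamma_jo')$ with $\zeta_j\notin O^{\GM}_{R}(o,\gamma_jo)$. Translate by $\gamma_j^{-1}$ and pass to limits in $\partial X_{\GM}$. The non-shadow condition forces $\gamma_j^{-1}o$ and $\gamma_j^{-1}\zeta_j$ to lie on the same side, meaning the Gromov product $(\gamma_j^{-1}o\,|\,\gamma_j^{-1}\zeta_j)_{o}$ is large. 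So the limits coincide (or $\gamma_j^{-1}\zeta_j$ converges to the limit of $\gamma_j^{-1}$ when $\gamma_j\to\infty$).

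Now apply $f$ and transversality of $\Lambda_{\theta\cup i(\theta)}$. The Lie shadow condition says $\gamma_j^{-1}\xi_j$ stays uniformly transverse to the repelling flag of $\gamma_j^{-1}$, which is the limit of $\gamma_j^{-1}$ in $\Fc_{i(\theta)}$. These two facts contradict each other, since the limit points would have to be both equal and transverse. The freedom to take $r_1\to\infty$ as $R\to\infty$ comes from the same argument run with $r_1$ fixed and $R$ chosen large depending on it.

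\textbf{Main obstacle.} The hardest part is the first inclusion near cusps. There, GM-geodesics travel deep into horoballs, and the orbit map is only coarsely controlled, so the uniform distance from the Weyl cone needs the relative-Morse-type estimates for peripheral subgroups. The approach we would try first is to use Zhu--Zimmer's comparison $\msf d_{\GM}\asymp \alpha(\mu(\cdot))$ along cusp excursions together with divergence of the peripheral subgroups.
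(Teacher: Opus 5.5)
Citing \cite[Proposition~5.7]{kim2025relatively} is exactly what the paper does, since it gives no proof of its own. Your contradiction argument for the second inclusion is sound once it is run with $r_1$ fixed and $R_j\to\infty$, as you indicate at the end. For $\gamma\in\Gamma$ write $\gamma=k_\gamma\exp(\mu(\gamma))\ell_\gamma$ and $U_{i(\theta)}(\gamma^{-1})=\ell_\gamma^{-1}w_0\msf P_{i(\theta)}$, and let $\eta$ be the common limit of $\gamma_j^{-1}\zeta_j$ and $\gamma_j^{-1}o$. The Lie shadow keeps $\gamma_j^{-1}\xi_j$ uniformly transverse to $U_{i(\theta)}(\gamma_j^{-1})\to f_i(\eta)$, while $\gamma_j^{-1}\xi_j\to f(\eta)$, and $f(\eta)$, $f_i(\eta)$ are not transverse.

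Your reproof of the first inclusion, however, has a genuine gap. The ``uniform form'' you invoke, that orbit images of Groves--Manning geodesics stay uniformly close to $\theta$-cones or diamonds, is the relative Morse property for $X_{\GM}$. That property is not known to follow from relative Anosov; the paper says so explicitly in the introduction. The weaker statement you actually use, that $\gamma o'$ lies near $V(o',f(\xi))$ whenever the ray from $o$ to $\xi$ passes near $\gamma o$, is literally the inclusion you are proving. The supporting steps are not available either. A ray that enters a horoball is not tracked by any sequence of group elements. Coarse additivity of $\mu_\theta$ along Groves--Manning geodesics is obtained in this paper only from \Cref{theorem: the flow space for relatively Anosov}~(5), i.e.\ from the same Kim--Oh work, so using it risks circularity.

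The cusp obstacle is not actually there: the first inclusion follows from the same compactness argument as the second.

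\textbf{Reduction.} Since $(\xi,\gamma o)_o+(\xi,o)_{\gamma o}=\msf d_{\GM}(o,\gamma o)+O(\delta)$, the condition $\xi\in O^{\GM}_R(o,\gamma o)$ gives $(\gamma^{-1}\xi,\gamma^{-1}o)_o\leqslant R+O(\delta)$. Suppose $\xi_j\in O^{\GM}_R(o,\gamma_jo)$ with $f(\xi_j)\notin O^\theta_j(o',\gamma_jo')$. Bounded $\gamma_j$ are excluded, since $O^\theta_r(o',\gamma o')=\Fc_\theta$ once $r\geqslant\msf d_{\msf G/\msf K}(o',\gamma o')$.

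\textbf{Distinct limits.} After passing to a subsequence, $\gamma_j^{-1}o\to\eta$ and $\gamma_j^{-1}\xi_j\to\eta'$ with $(\eta',\eta)_o\leqslant R+O(\delta)$, so $\eta'\neq\eta$. Antipodality of $\Lambda_{\theta\cup i(\theta)}$ then makes $f(\eta')$ transverse to $f_i(\eta)=\lim U_{i(\theta)}(\gamma_j^{-1})$. Hence $\ell_{\gamma_j}\gamma_j^{-1}f(\xi_j)=n_j\msf P_\theta$ with $n_j$ in a fixed compact subset of the opposite unipotent radical $N^-_\theta$.

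\textbf{Back to Lie shadows.} Now use the converse half of the shadow--transversality dictionary, which needs no regularity. After applying the isometry $\ell_{\gamma_j}\gamma_j^{-1}$ and setting $a_j=\exp(\mu(\gamma_j))$, one must show that the Weyl cone from $a_j^{-1}o'$ toward $n_j\msf P_\theta$ meets a ball of uniform radius about $o'$. Compare it with the cone $n_jV(a_j^{-1}o',\msf P_\theta)\ni n_jo'$; cones toward a common flag are at Hausdorff distance at most the distance between their tips. So the cone passes within $\msf d_{\msf G/\msf K}(a_jn_ja_j^{-1}o',o')+\msf d_{\msf G/\msf K}(n_jo',o')$ of $o'$, which is uniformly bounded because $\Ad(a_j)$ contracts $N^-_\theta$. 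This contradicts $f(\xi_j)\notin O^\theta_j(o',\gamma_jo')$ for large $j$.

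This argument uses only continuity of $f$, antipodality, and convergence of Cartan flags along $\gamma_jo\to\eta$, the same inputs as your second inclusion. Nothing about the orbit map inside horoballs is needed.
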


\begin{lemma}[{\cite[Lemma~9.2]{kim2025relatively}}]\label{lemma: the shadow lemma for relatively Anosov}
    For any sufficiently large $R > 0$, there exists a constant $C\geqslant 1$ such that for every $\gamma\in\Gamma$ we have \[C^{-1} e^{-\psi(\mu_\theta(\gamma))} \leqslant \nu_\psi(O^\theta_R(o',\gamma o')) \leqslant C e^{-\psi(\mu_\theta(\gamma))}~.\]
\end{lemma}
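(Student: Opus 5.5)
This is the Sullivan shadow lemma in the relatively Anosov setting. The plan is to derive both bounds from the conformality relation in \Cref{definition: Patterson--Sullivan measures}, combined with two facts:
\begin{enumerate}[label=(\roman*)]
    \item a Busemann estimate on shadows;
    \item a uniform lower bound on the mass of ``large'' sets.
\end{enumerate}
The upper bound is easy. The lower bound needs a non-atomicity input, which is the genuinely relative part.

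\emph{Step 1: Busemann estimate.} We show there is $\kappa=\kappa(R)$ such that for all $\gamma\in\Gamma$ and all $\xi\in O^\theta_R(o',\gamma o')$,
\[\norm{\beta^\theta_\xi(e,\gamma)-\mu_\theta(\gamma)}\leqslant \kappa .\]
Take $\xi=k\msf P_\theta$ with $k\exp(\fraka^+)o'$ passing within $R$ of $\gamma o'$, say at $k\exp(v)o'$. Then $\beta^\theta_\xi(o',k\exp(v)o')=p_\theta(v)$, and $\norm{p_\theta(v)-\mu_\theta(\gamma)}\leqslant R$ since $\mu$ is $1$-Lipschitz. The Busemann function is Lipschitz in its last two arguments, which gives the bound. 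This is standard Lie theory, as in Quint or Lee--Oh.

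\emph{Step 2: Conformality.} Write $O=O^\theta_R(o',\gamma o')$. Conformality gives
\[\nu_\psi(O)=\int_{\gamma^{-1}O} e^{-\psi(\beta^\theta_{\gamma\eta}(e,\gamma))}\,\de\nu_\psi(\eta).\]
The sign convention is checked directly against \Cref{definition: Patterson--Sullivan measures}. By Step 1, the integrand is comparable to $e^{-\psi(\mu_\theta(\gamma))}$ up to the factor $e^{\pm\norm{\psi}\kappa}$. Hence
\[\nu_\psi(O)\asymp e^{-\psi(\mu_\theta(\gamma))}\,\nu_\psi(\gamma^{-1}O).\]
The upper bound follows immediately from $\nu_\psi(\gamma^{-1}O)\leqslant 1$.

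\emph{Step 3: Uniform lower bound on $\nu_\psi(\gamma^{-1}O)$.} This is the main obstacle. Note that $\gamma^{-1}O$ is the shadow of $o'$ seen from $\gamma^{-1}o'$. We show that for $R$ large it contains $\Lambda_\theta\setminus U_\gamma$, where $U_\gamma$ is a neighborhood of a single flag $\zeta_\gamma$ (roughly the direction of $\gamma^{-1}o'$). Moreover, these neighborhoods shrink uniformly as $R\to\infty$.

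To get this uniformity, I would pass through the Groves--Manning model. In $X_{\GM}$, the analogous statement holds by \Cref{lemma: shadow and visual ball comparison}: the shadow of $o$ from $\gamma^{-1}o$ contains the complement of a visual ball of radius $\to0$ around the endpoint in the direction of $\gamma^{-1}o$. I then transfer it to $\Fc_\theta$ via \Cref{lemma: the comparison for relatively Anosov}, using that $r_1\to\infty$ as $R\to\infty$.

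With this in hand, it suffices to show
\[\inf_{\zeta\in\Lambda_\theta}\nu_\psi(\Lambda_\theta\setminus U_\epsilon(\zeta))>0\]
for small $\epsilon$. By compactness of $\Lambda_\theta$ and weak continuity, this reduces to showing that $\nu_\psi$ has no atoms.

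\emph{Non-atomicity.} Conical limit points cannot be atoms: the upper bound from Step 2, applied along a sequence of shadows containing the point, gives mass $\lesssim e^{-\psi(\mu_\theta(\gamma_n))}\to 0$. For a bounded parabolic point $p$ with stabilizer $P\in\Pc$, an atom at $p$ forces
\[\sum_{\gamma\in\Gamma/P}\nu_\psi(\{\gamma p\})<\infty.\]
Via Step 1 this is comparable to a tail of the Poincaré series. I would instead use the standard Dal'bo--Otal--Peigné argument: since $Q^\psi_\Gamma(1)=+\infty$ (\Cref{theorem: PS existence uniqueness ergodicity}), the measure gives full mass to conical limit points. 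Hence parabolic points, being countably many, carry no mass.

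If this last step is delicate in higher rank, I would fall back on the following. The Patterson construction realizes $\nu_\psi$ as a weak limit of normalized orbit measures. The mass near $p$ can then be estimated directly by the $\psi$-Poincaré series of $P$, which has critical exponent strictly less than $1$ by the gap property for relatively Anosov groups.
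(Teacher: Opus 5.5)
The paper does not prove this lemma; it quotes it from Kim--Oh. Your Steps~1--2 are the standard Sullivan argument and are correct: they give the upper bound and reduce the lower bound to $\nu_\psi(O)\asymp e^{-\psi(\mu_\theta(\gamma))}\nu_\psi(\gamma^{-1}O)$.

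In Step~3, however, you use the wrong half of \Cref{lemma: the comparison for relatively Anosov}. The $r_1$-inclusion puts a $\theta$-shadow \emph{inside} a Groves--Manning shadow, which can only help an upper bound, so $r_1\to\infty$ plays no role. What you need is $f(O^{\GM}_{R_0}(o,\gamma o))\subset O^\theta_{r_2}(o',\gamma o')\cap\Lambda_\theta$ for one fixed, large Groves--Manning radius $R_0$. Apply the conformality identity of Step~2 to the set $f(O^{\GM}_{R_0}(o,\gamma o))$; Step~1 applies there with radius $r_2$. This gives $\nu_\psi(O^\theta_R(o',\gamma o'))\geqslant c\,e^{-\psi(\mu_\theta(\gamma))}\,\nu_\psi(f(O^{\GM}_{R_0}(\gamma^{-1}o,o)))$ for every $R\geqslant r_2$, by monotonicity of shadows in the radius. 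No shrinking in terms of the $\theta$-radius is needed. Separately, the claim that $O^{\GM}_{R_0}(\gamma^{-1}o,o)$ contains the complement of a visual ball of radius $\asymp e^{-R_0}$ is not \Cref{lemma: shadow and visual ball comparison}, which concerns shadows seen \emph{from} the basepoint. It does follow from the same Gromov-product estimate.

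The genuine gap is your final input, non-atomicity. Your main argument for it is circular. The claim that divergence of $Q^\psi_\Gamma$ at $1$ forces full mass on the conical limit set is a Hopf--Tsuji--Sullivan-type statement. Its standard proof (a Borel--Cantelli/Kochen--Stone argument over shadows) takes both bounds of the shadow lemma as input. Your fallback through the Patterson construction and $\delta_\psi(P)<1$ is not circular. But it needs a coarse additivity of $\psi\circ\mu_\theta$ along coset representatives of $P$, which you have not established.

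None of this is needed, because your compactness reduction only requires that $\nu_\psi$ not be a Dirac mass. Suppose $\sup_\zeta\nu_\psi(U_{1/n}(\zeta))\to1$, and choose near-maximizers $\zeta_n\to\zeta$. By the quasi-ultrametric inequality for the visual quasi-metric, $U_{1/n}(\zeta_n)$ lies in any fixed neighborhood of $\zeta$ for $n$ large, so $\nu_\psi(\{\zeta\})=1$. But $\nu_\psi=\delta_\zeta$ is impossible. The density in \Cref{definition: Patterson--Sullivan measures} is positive, so $\gamma_\ast\nu_\psi$ and $\nu_\psi$ are equivalent, which forces $\gamma\zeta=\zeta$ for all $\gamma\in\Gamma$. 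This contradicts non-elementarity of $\Gamma$, which is implicit in the statement; for elementary groups the lower bound fails. With these repairs your argument is a complete proof.
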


\subsection{Reparametrization cocycles}\label{section: preliminary on fibered dynamical systems}

Let $\Gamma<\msf G$ be a $\theta$-Anosov group relative to a collection $\Pc$ of subgroups and let $X_{\GM}$ be a Groves--Manning cusp space of $(\Gamma,\Pc)$. Suppose $\psi\in \fraka_\theta^\ast$ is a $(\Gamma,\theta)$-proper, positive linear form that is tangent to the $\theta$-growth indicator of $\Gamma$. The work of Kim--Oh \cite{kim2025relatively} considers another flow space for the study of Patterson--Sullivan theory and the following result presents its relation to $\Gc(X_{\GM})$.

We write $\Lambda_\theta^{(2)} = (\Lambda_\theta\times \Lambda_{i(\theta)}) \cap \Fc_\theta^{(2)}$. The topological space
$\widetilde\Omega_\psi= \Lambda_\theta^{(2)}\times \Rb$ is endowed with a properly discontinuous action of $\Gamma$ by the fibered action associated to the $\psi$-value on the Busemann cocycle, see \cite[Section~9]{kim2025properly}. More precisely,  for any $\gamma\in \Gamma$ and $(\xi,\eta,t)\in \widetilde\Omega_\psi$ we have
\[\gamma (\xi,\eta,t) = (\gamma \xi, \gamma \eta, t + \psi(\beta^\theta_\xi (\gamma^{-1},e)))~.\]
We denote the quotient by $\Omega_\psi=\Gamma\backslash \widetilde\Omega_\psi$ and the quotient map by $p_\psi:\widetilde\Omega_\psi\to \Omega_\psi$. By \cite[Theorem~9.2]{kim2025properly}, $\Omega_\psi$ is locally compact Hausdorff. The translation flow $\phi^s(\xi,\eta,r)=(\xi,\eta,r+s)$ on $\wt \Omega_\psi$ commutes with the action of $\Gamma$ and therefore descends to a flow on $\Omega_\psi$, again denoted by $\phi^s$.
For $v = (\xi,\eta,s)\in \wt \Omega_{\psi}$, we write $v^+ = \xi$ and $v^- = \eta$.

The next theorem is due to Kim--Oh is one of our main tools used throughout the paper. 

\begin{theorem}[{\cite[Theorem~1.4, Lemma~6.5]{kim2025relatively}}]\label{theorem: the flow space for relatively Anosov}
    There exists a continuous, surjective, proper, $\Gamma$-equivariant map $\wt \Psi:\Gc(X_{\GM})\to \wt\Omega_\psi$ with a continuous cocycle $\wt \sft:\Gc(X_{\GM})\times \Rb\to \Rb$ such that for all $\sigma\in \Gc(X_{\GM})$ and $s\in \Rb$,
    \begin{enumerate}
        \item $\wt\Psi(\varphi^s\sigma) = \phi^{\wt \sft(\sigma,s)}(\wt\Psi(\sigma))$.
        \item $\wt \sft(\sigma,s) = -\wt \sft(\varphi^s\sigma,-s)$.
        \item There are constants $a,a',B>0$ such that \[a\abs{s}-B \leqslant \wt \sft(\sigma,\abs{s}) \leqslant a'\abs{s}+B \quad \text{ for all } \sigma\in\Gc(X_{\GM}),\ s\in\Rb~.\]
        \item The diameter of \[\{\sigma(0)\in X_{\GM} : \sigma \in \wt \Psi^{-1}(v)\}\] is uniformly bounded for all $v\in \wt \Omega_\psi$.
        \item If $Q\subset X_{\GM}$ is a compact subset, then there is a constant $C_Q\geqslant0$ such that whenever $\sigma\in \Gc(X_{\GM})$, $t \geqslant 0$ and $\gamma \in \Gamma$ are such that $\sigma(0), \gamma^{-1}\sigma(t) \in Q$, we have $\abs{\wt \sft(\sigma,t) - \psi(\mu_\theta (\gamma))} \leqslant C_Q$.
    \end{enumerate} 
\end{theorem}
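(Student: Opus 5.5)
The plan is to build $\wt\Psi$ explicitly from the limit maps and a $\psi$-Busemann coordinate, and then read off properties (1)--(5) from the shadow estimates. For $\sigma\in\Gc(X_{\GM})$ with endpoints $\sigma(+\infty),\sigma(-\infty)\in\partial X_{\GM}$, the flag pair $(f(\sigma(+\infty)),f_i(\sigma(-\infty)))$ lies in $\Lambda_\theta^{(2)}$ by transversality of $\Lambda_{\theta\cup i(\theta)}$. So I only need a real coordinate.

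First I choose a $\Gamma$-equivariant, continuous, coarsely well-defined map $\Phi:X_{\GM}\to\msf G/\msf K$ with $\Phi(\gamma o)=\gamma o'$ coarsely, for instance by gluing the orbit map with a $\Gamma$-invariant partition of unity on the locally finite graph. I then set
\[\wt\Psi_0(\sigma)=\Big(f(\sigma(+\infty)),\,f_i(\sigma(-\infty)),\,\psi\big(\beta^\theta_{f(\sigma(+\infty))}(o',\Phi(\sigma(0)))\big)\Big),\]
where $\beta^\theta$ is extended to points of $\msf G/\msf K$ in the usual way. Equivariance under the fibered action $\gamma(\xi,\eta,t)=(\gamma\xi,\gamma\eta,t+\psi(\beta^\theta_\xi(\gamma^{-1},e)))$ follows from the cocycle identity of $\beta^\theta$.

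The map $s\mapsto\wt\Psi_0(\varphi^s\sigma)$ keeps the first two coordinates fixed, so it defines a real function $\sft_0(\sigma,s)$. This function is only coarsely increasing. To make it a genuine cocycle that is increasing in $s$, I replace the third coordinate by a time-average $\frac1L\int_0^L\psi(\beta^\theta_{\xi}(o',\Phi(\sigma(u))))\,\de u$ for a large $L$. Coarse monotonicity over scale $L$ then makes the averaged function strictly increasing. I define $\wt\sft(\sigma,s)$ as the difference of third coordinates, which gives (1) and (2) immediately.

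The key estimate, which gives both (3) and (5), is the following. If $\sigma(0)$ and $\gamma^{-1}\sigma(t)$ lie in a compact $Q$, then $\sigma(+\infty)$ lies in a uniform shadow $O_R^{\GM}(o,\gamma o)$ up to translating basepoints. By \Cref{lemma: the comparison for relatively Anosov}, $f(\sigma(+\infty))$ then lies in $O^\theta_{r}(o',\gamma o')$. For flags in such a shadow, a standard Lie-theoretic estimate gives $\|\beta^\theta_\xi(o',\gamma o')-\mu_\theta(\gamma)\|$ bounded in terms of $r$, and applying $\psi$ yields (5).

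Property (3) follows by applying (5) along $\sigma$ at times where $\sigma$ passes near orbit points, combined with \Cref{lemma: comparison positive linear forms} and the orbit quasi-isometry between $X_{\GM}$ and $\msf G/\msf K$. Together these give $\psi(\mu_\theta(\gamma))\asymp \msf d_{\GM}(o,\gamma o)$ up to additive error for the relevant $\gamma$.

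For properness and surjectivity, I use two facts. The endpoint map $\Gc(X_{\GM})\to\partial X_{\GM}^{(2)}$ is surjective and proper modulo the flow. The flow direction is proper by (3). Property (4) holds because two geodesics with the same endpoints and the same Busemann level are uniformly close in the hyperbolic space $X_{\GM}$.

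The main obstacle is the cusp excursions. When $\sigma$ travels deep into a horoball, it is not near any orbit point, and the shadow comparison and (5) do not directly apply. There I would use that a geodesic in a horoball of $X_{\GM}$ enters and exits near orbit points $p_1o,p_2o$ with $p_1^{-1}p_2$ in a parabolic subgroup. Applying (5) at the entry and exit points gives the increment $\approx\psi(\mu_\theta(p_1^{-1}p_2))$. Relating this to the depth traveled requires the estimate $\psi(\mu_\theta(p))\asymp\log$ word length of $p$, which comes from the relative Anosov property via the orbit quasi-isometry. That estimate controls linear growth on the horoball segment and keeps the averaging step monotone there.
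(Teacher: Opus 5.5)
The paper does not prove this statement; it imports it from Kim--Oh, so I am judging your proposal against what the statement requires. Your equivariance check and your proof of (5) are fine. Properness, surjectivity and (4) would follow from (3) together with a coarse Lipschitz bound on the time coordinate. The gap is (3) itself, exactly in the cusps you flag, and the fix you propose does not close it.

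Your time coordinate depends only on $\Phi\circ\sigma$ restricted to $[0,L]$. So everything depends on how $\Phi$ is defined on the combinatorial horoballs, and you leave that unspecified. Take, for instance, the nearest-orbit-point map $(gp,n)\mapsto gp\,o'$.
\begin{itemize}
\item Along the vertical ray $\sigma(s)=(e,s)$ toward the parabolic point of $P$, the path $\Phi\circ\sigma$ is constant. Hence $\wt\sft(\sigma,s)\equiv 0$, and the lower bound in (3) fails.
\item Take a geodesic that descends to depth $n$, crosses one horizontal edge $(p,n)$--$(pq,n)$ with $|q|_P\approx 2^n$, and climbs back. Its coordinate jumps by about $\psi(\mu_\theta(q))\asymp n$ in unit time. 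This breaks the upper bound in (3) and the coarse Lipschitz control you need for (4).
\end{itemize}
Averaging over a window of fixed length $L$ cannot repair a flat stretch or a jump of unbounded size. So the claimed strict monotonicity also fails there.

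Your entry/exit estimate only controls the \emph{net} increment $\approx\psi(\mu_\theta(p_1^{-1}p_2))$ over a complete horoball excursion. It says nothing at intermediate times. It says nothing about $\sigma$ with $\sigma(0)$ deep in a horoball. It also says nothing about geodesics ending at a parabolic point, which never exit.

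For a basepoint-dependent coordinate $\psi(\beta^\theta_{f(\sigma^+)}(o',\Phi(\sigma(0))))$ to satisfy (3) and (4), $\Phi$ would in effect have to be a $\Gamma$-equivariant quasi-isometric embedding of the whole cusp space extending the orbit map. Moreover, $\psi\circ\beta^\theta$ would have to grow linearly along images of geodesics. That is essentially the extra structure the relatively Morse hypothesis supplies, and nothing in your argument constructs it.

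What is missing is a time coordinate that depends on the whole geodesic rather than on its basepoint. One option is to build it from a $\Gamma$-equivariant continuous family of norms (points of $\msf G/\msf K$) indexed by $\sigma\in\Gc(X_{\GM})$. The flow should uniformly contract and expand relative to the splitting given by the limit maps, in the spirit of Zhu--Zimmer's flow characterization of relative Anosovness. Then (3) comes from the uniform expansion and (5) from comparison at orbit points. If you instead interpolate ad hoc between entry and exit values, you must make this continuous in $\sigma$, since entry and exit times jump for geodesics grazing a horoball. You must also extend it to geodesics with parabolic endpoints.
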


The theorem above also implies that $\wt\Psi$ restricts to a surjection from $\{\varphi^r(\sig):r\in \R\}$ onto $\{\phi^s(\wt\Psi(\sig)):s\in \R\}$ for any $\sig\in \Gc(X_{\GM})$. Since $\wt\Psi$ is surjective, we can define an inverse cocycle of $\wt \sft$ as the map $\wh \sft: \wt \Omega_\psi \times \R \to \R$ given by \[\wh \sft(v,s)=\inf\{r\in \R: \wt\Psi(\varphi^r(\sig))=\phi^s(v)\text{ for some }\sig\in \wt\Psi^{-1}(v)\}~.\] 

\begin{lemma}\label{lemma: hat t almost cocycle}
    $\wh \sft$ is a measurable almost cocycle. That is, there exists $C>0$ such that for all $v\in \wt \Omega_\psi$ and $s,s'\in \R$ we have \[\abs{\wh \sft(v,s+s') -\wh \sft(v,s) -\wh \sft(\phi^s(v),s')}\leqslant C~.\] 
    Moreover, $\wh {\msf t}$ is $\Gamma$-invariant, so it descends to a measurable almost cocycle $\ov \sft: \Omega_\psi\times \R \ra \R$. 
\end{lemma}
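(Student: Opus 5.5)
The plan is to show three things about $\wh\sft$ in turn: it is an almost cocycle, it is measurable, and it is $\Gamma$-invariant. All three come from properties (1)--(5) of \Cref{theorem: the flow space for relatively Anosov}.

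\textbf{Almost cocycle property.} Fix $v\in\wt\Omega_\psi$ and $s\in\R$, and set
\[R(v,s)=\{r\in\R:\ \wt\Psi(\varphi^r\sigma)=\phi^s(v)\text{ for some }\sigma\in\wt\Psi^{-1}(v)\}.\]
This set is non-empty, because $\wt\Psi$ maps each flow line onto the corresponding $\phi$-orbit. The first goal is to show that $R(v,s)$ has uniformly bounded diameter. Suppose $r,r'\in R(v,s)$, realized by $\sigma,\sigma'\in\wt\Psi^{-1}(v)$. By (4), the points $\sigma(0)$ and $\sigma'(0)$ lie within a uniform distance $D$ of each other. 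Also $\varphi^r\sigma,\varphi^{r'}\sigma'\in\wt\Psi^{-1}(\phi^s v)$, so again by (4) the points $\sigma(r)$ and $\sigma'(r')$ are within $D$. Since $\sigma$ and $\sigma'$ are unit-speed geodesics, the triangle inequality gives $\abs{\abs r-\abs{r'}}\leqslant 2D$.

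A sign issue remains. By (1) and (3), $\wt\sft(\sigma,r)=s=\wt\sft(\sigma',r')$, and the coarse monotonicity of $\wt\sft$ from (3) forces $r$ and $r'$ to have the same sign once $\abs s$ is large. For bounded $\abs s$, both $\abs r$ and $\abs{r'}$ are bounded by (3), so the diameter bound holds trivially. Hence $\operatorname{diam}R(v,s)\leqslant C_0$ uniformly, and in particular $\wh\sft(v,s)$ is finite.

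Now take $\sigma\in\wt\Psi^{-1}(v)$ and $r\in R(v,s)$ realized by $\sigma$. Then $\varphi^r\sigma\in\wt\Psi^{-1}(\phi^s v)$. Choose $r'\in R(\phi^s v,s')$ realized by $\varphi^r\sigma$. By the cocycle property of $\varphi$, $\wt\Psi(\varphi^{r+r'}\sigma)=\phi^{s+s'}v$, so $r+r'\in R(v,s+s')$. The diameter bound then gives
\[\abs{\wh\sft(v,s+s')-\wh\sft(v,s)-\wh\sft(\phi^sv,s')}\leqslant 3C_0.\]

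\textbf{Measurability.} Write $\wh\sft(v,s)=\inf\{r: (\sigma,r)\in E,\ \wt\Psi(\sigma)=v\}$, where
\[E=\{(\sigma,r):\wt\Psi(\varphi^r\sigma)=\phi^s\wt\Psi(\sigma)\}\subset\Gc(X_{\GM})\times\R.\]
The set $E$ is closed because $\wt\Psi$ is continuous. Since $\wt\Psi$ is proper, the set
\[\{(\sigma,r)\in E:\ \wt\Psi(\sigma)=v,\ r\leqslant a\}\]
is compact and varies upper-semicontinuously in $(v,s)$. It follows that $\{(v,s):\wh\sft(v,s)\leqslant a\}$ is closed, so $\wh\sft$ is lower semicontinuous and hence Borel.

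This is the step I expect to be the main obstacle. The upper-semicontinuity claim needs care, because $r$ is a priori unbounded. I would handle this with (3): it confines all relevant $r$ to a compact interval depending continuously on $s$, and once $r$ ranges over a compact set, properness of $\wt\Psi$ gives the compactness needed.

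\textbf{$\Gamma$-invariance.} Since $\wt\Psi$ is $\Gamma$-equivariant, $\gamma$ maps $\wt\Psi^{-1}(v)$ bijectively onto $\wt\Psi^{-1}(\gamma v)$. The action of $\Gamma$ commutes with both $\varphi$ and $\phi$, so $R(\gamma v,s)=R(v,s)$ and therefore $\wh\sft(\gamma v,s)=\wh\sft(v,s)$. Consequently $\wh\sft$ descends to a function $\ov\sft$ on $\Omega_\psi\times\R$. Measurability of $\ov\sft$ follows by composing $\wh\sft$ with a Borel section of $p_\psi$, and the almost cocycle inequality passes to $\ov\sft$ directly.
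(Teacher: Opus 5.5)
Your proposal is correct and follows the paper's route. The paper also shows that the sets $R(v,s)$ have uniformly bounded diameter using property (4), obtains the almost cocycle inequality from the flow/cocycle property, and deduces $\Gamma$-invariance from equivariance.

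You go beyond the paper's brief argument in two places. First, you resolve the sign ambiguity that (4) alone leaves open. Note that this step uses (2) together with (3), since (3) only controls $\wt\sft(\sigma,\cdot)$ at nonnegative times. Also, $\wt\sft(\sigma,r)=s$ follows from (1) and the freeness of the translation flow, not from (3). Second, you actually prove measurability via lower semicontinuity and properness of $\wt\Psi$, which the paper leaves implicit.
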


\begin{proof}
   Since $\wt \sft$ is a cocycle, the almost cocycle property follows after noting that the sets 
  $\{r\in \R: \wt\Psi(\varphi^r(\sig))=\phi^s(v)\text{ for some }\sig\in \wt\Psi^{-1}(v)\}$ are uniformly bounded for $v\in \wt\Omega_\psi$ and $s\in \R$. This last property follows from \Cref{theorem: the flow space for relatively Anosov}~(4). The $\Gamma$-invariance of $\wh \sft$ follows from the $\Gamma$-invariance of $\wt \sft$.
\end{proof}

The \emph{Bowen--Margulis current} associated to $\psi$ is the measure $\sfL_\psi$ on $\Lam_\theta^{(2)}$ defined as \[\de \sfL_\psi(\xi,\eta) =e^{2\psi(G^\theta(\xi,\eta))} \de\nu_\psi(\xi)\de\wh\nu_\psi(\eta)~.\]
By the conformality of $\nu_\psi$ and $\wh\nu_\psi$, and by \Cref{lemma: Gromov product conformal identity}, $\sfL_\psi$ is $\Gamma$-invariant. The (lifted) \emph{Bowen--Margulis--Sullivan measure} on $\widetilde\Omega_\psi$ is defined as 
\[ \widetilde{\frakm}_\psi =  \sfL_\psi \otimes \mathrm{Leb}_\R~,\]
where $\mathrm{Leb}_\R$ denotes the Lebesgue measure on $\Rb$. This measure descends to a $\phi$-invariant Radon measure $\frakm_\psi$ on $\Omega_\psi$, which we again call the \emph{Bowen--Margulis--Sullivan measure} on $\Omega_\psi$.

One of the main results from \cite{kim2025relatively} is the following. 

\begin{theorem}[{\cite[Theorem~1.1]{kim2025relatively}, \cite[Theorem~8.1, Section~10]{blayac2024counting}}]\label{theorem: BMS finite and strongly mixing}
   The measure $\frakm_\psi$ is finite and $(\Omega_\psi, \frakm_\psi,\phi^t)$ is strongly mixing.
\end{theorem}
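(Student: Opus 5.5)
The plan is to move the problem to the Groves--Manning geodesic flow using \Cref{theorem: the flow space for relatively Anosov}, and then run the rank one arguments there: Dal'bo--Otal--Peign\'e for finiteness and Babillot for mixing. The relatively Anosov hypothesis enters only through three inputs: the shadow lemma (\Cref{lemma: the shadow lemma for relatively Anosov}), divergence of the Poincar\'e series (\Cref{theorem: PS existence uniqueness ergodicity}), and the coarse comparison $\psi(\mu_\theta(\gamma))\approx \wt\sft$ in \Cref{theorem: the flow space for relatively Anosov}~(5).

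\emph{Finiteness.}
\begin{enumerate}
\item Split $\Gc(X_{\GM})$ modulo $\Gamma$ into a compact part and finitely many cusp regions, one for each conjugacy class of $P\in\Pc$. By properness of $\wt\Psi$, the image of the compact part is compact in $\Omega_\psi$, so it has finite $\frakm_\psi$-mass because $\frakm_\psi$ is Radon.
\item In a cusp region of $P$, use \Cref{theorem: the flow space for relatively Anosov}~(5) to see that an excursion entering and exiting near $o$ and $po$ spends flow time $\psi(\mu_\theta(p))+O(1)$ in the cusp.
\item Bound the $\mathsf L_\psi$-mass of the set of endpoints of such excursions using \Cref{lemma: the shadow lemma for relatively Anosov}, transported via \Cref{lemma: the comparison for relatively Anosov}. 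Its $\nu_\psi$-mass is at most a constant times $e^{-\psi(\mu_\theta(p))}$.
\item Multiplying flow time by endpoint mass, the cusp region has mass $\lesssim\sum_{p\in P}\psi(\mu_\theta(p))e^{-\psi(\mu_\theta(p))}$. So it suffices to prove the parabolic gap $\delta_\psi(P)<1=\delta_\psi(\Gamma)$.
\item $P$ is virtually nilpotent, hence of polynomial growth. Since $\Gamma$ is orbit quasi-isometric to $X_{\GM}$, where $\msf d_{\GM}(o,po)\approx 2\log|p|$, I expect $\psi(\mu_\theta(p))$ to be comparable to $\log|p|$. The Poincar\'e series of $P$ is then comparable to $\sum_n n^{k-1}n^{-cs}$, which diverges at its own critical exponent.
\item Apply the Dal'bo--Otal--Peign\'e argument. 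A divergent-type parabolic subgroup with $\delta_\psi(P)=\delta_\psi(\Gamma)$ would let us build, from words alternating between $P$ and a Schottky-type element of $\Gamma$, a free product whose exponent strictly exceeds $\delta_\psi(\Gamma)$. That is a contradiction, so the gap is strict.
\end{enumerate}
The comparison in step 5 needs care, because $\psi$ is only linear on $\fraka_\theta$ and not a metric. I would use \Cref{lemma: comparison positive linear forms} to compare $\psi$ with a norm, up to multiplicative constants.

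\emph{Ergodicity.} Divergence $Q^\psi_\Gamma(1)=\infty$ together with the shadow lemma gives that $\nu_\psi$-almost every point is conical, by Borel--Cantelli. A Hopf argument along stable and unstable leaves of $\wt\Omega_\psi$ then shows that $(\Omega_\psi,\frakm_\psi,\phi)$ is ergodic.

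\emph{Mixing.} I would follow Babillot. Since $\frakm_\psi$ is finite and ergodic, failure of mixing forces an $L^2$ eigenfunction, equivalently a nontrivial quasi-invariance of $\sfL_\psi$ under a lattice $c\mathbb Z$. I would then show non-arithmeticity: the $\psi$-translation lengths of loxodromic elements of $\Gamma$ do not lie in a discrete subgroup $c\mathbb Z$.

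To prove it, use cross-ratios built from $\psi\circ G^\theta$ via \Cref{lemma: Gromov product conformal identity}. If all periods lay in $c\mathbb Z$, then the cross-ratio of four limit points would take values in $c\mathbb Z$ on a dense set of quadruples of fixed points. By continuity it would then be constant on the perfect connected-component structure of $\Lambda_\theta$. This would contradict transversality and the fact that the cross-ratio is non-constant near the diagonal.

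I expect this non-arithmeticity step to be the main obstacle, since no Zariski density is assumed. If the cross-ratio argument fails because $\Lambda_\theta$ may be totally disconnected, I would instead use ping-pong to produce pairs of loxodromics $g,h$ with $\psi(\lambda(g^nh^n))-n\psi(\lambda(g))-n\psi(\lambda(h))$ converging to a nonzero cross-ratio that varies continuously in a free parameter. That makes the length spectrum non-discrete.
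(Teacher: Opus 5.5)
The paper does not prove this statement; it imports it from \cite{kim2025relatively} and \cite{blayac2024counting}. Your finiteness scheme matches the paper's own sketch in the proof of \Cref{lemma: finiteness and ergodicity}: a compact part plus cusp regions, cusp mass bounded by a constant times $\sum_{p\in P}\psi(\mu_\theta(p))e^{-\psi(\mu_\theta(p))}$, and a reduction to $\delta_\psi(P)<1$. There, however, the parabolic gap is taken from \cite[Corollary~7.2]{canary2025patterson}, and your derivation of it in steps 5--6 has a genuine hole. The orbit quasi-isometry with $X_{\GM}$ and \Cref{lemma: comparison positive linear forms} only give $A^{-1}\log|p|-B\leqslant\psi(\mu_\theta(p))\leqslant A\log|p|+B$ on $P$. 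Bounds of this kind cannot decide whether $Q^\psi_P$ diverges at $\delta_\psi(P)$. A weight behaving like $\log|p|+2\log\log|p|$ on $P=\mathbb{Z}$ satisfies them, yet its series converges at its exponent $1$. Your comparison with $\sum_n n^{k-1}n^{-cs}$ silently assumes $\psi(\mu_\theta(p))=c\log|p|+O(1)$ with a single $c$, and that does not follow from a quasi-isometry. Even for unipotent groups, $\log\norm{\cdot}$ can grow anisotropically, like $\log\max(a^2,|b|)$ on $\mathbb{Z}^2$. The Dal'bo--Otal--Peign\'e gap genuinely requires $P$ to be of divergence type; in variable curvature they exhibit $\delta_P=\delta_\Gamma$. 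So step 6 has nothing to run on. The missing ingredient is a sharp, additive-error control of $\psi\circ\mu_\theta$ on peripheral subgroups, which is exactly the nontrivial content the paper imports from Canary--Zhang--Zimmer.

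There are two smaller issues in the same part. First, in higher rank the lower bound for the free-product Poincar\'e series needs coarse additivity of $\psi\circ\mu_\theta$ along ping-pong words, via a transversality lemma, not a triangle inequality. Second, for ergodicity, Borel--Cantelli only gives the converse direction (convergence implies the conical set is null). You need a quasi-independence argument, or the observation that the non-conical limit points are the countably many parabolic points and $\nu_\psi$ has no atoms there. That observation again rests on the gap.

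For mixing, reducing to non-arithmeticity of the $\psi$-length spectrum via Babillot is the right framework, but the non-arithmeticity proof is missing, and this is not a technicality. Relatively Anosov groups can have Cantor limit sets, for instance a geometrically finite Fuchsian group of the second kind with cusps, composed with an irreducible representation into $\SL(d,\Rb)$. In that case a continuous cross-ratio with values in $c\mathbb{Z}$ is only locally constant, and nothing you wrote prevents it from vanishing near the diagonal. Your fallback has no content as stated: $\Gamma$ is countable, so there is no continuous free parameter. Producing nonzero but arbitrarily small cross-ratios of fixed points is exactly the claim to be proved. In fact no argument using only coarse hyperbolic structure (shadows, ping-pong, continuity of Gromov products on the boundary) can work. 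For lattices acting on simplicial trees, the length spectrum lies in $\mathbb{Z}$ and the geodesic flow is not mixing, even though all those features are present. You need Lie-theoretic input. One example is Benoist's theorem that the Jordan projections of a Zariski dense subgroup of a semisimple group generate a dense subgroup of its Cartan subspace. Since Zariski density is not assumed here, it would have to be applied inside (the semisimple part of) the Zariski closure of $\Gamma$.
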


\section{First return maps and induced cocycles}\label{section: first return}

In this section we construct the general framework for first return maps associated to a given dynamical system. Similar ideas have been used to study flow spaces associated to hyperbolic groups in \cite[Section~3.3]{EPS} and \cite[Section~6]{CMGR}.

Suppose $\Omega$ is a measurable space and $\phi=(\phi^t)_{t\in \Rb}$ is a measurable flow on $\Omega$. Given a measurable subset $K\subset \Omega$, its \emph{return set} is \[\Fc_K = \{ v \in K : \phi^n(v)\in K \text{ for infinitely many } n\in \Nb\}\] and the \emph{first return map} is $\Phi :\Fc_K \to \Fc_K$ given by \[\Phi (v)=\phi^{T_1(v)}(v), \quad \text{ where }\quad T_1(v)=\min\{n\in \N:\phi^n(v)\in K\}~.\] We call $T_1(v)$ the \emph{first return time}, and let $T_n(v)=T_1(\Phi^{n-1}v)+T_{n-1}(v)$ be the \emph{$n$-th return time} defined inductively.
Note that $\Phi$ is actually the first return of $\phi^1$ rather than the continuous flow.
By Poincar\'e recurrence, for any $\phi$-invariant probability measure $\frakm$ on $\Omega$ with $\frakm(K)>0$, we have $\frakm(\Fc_K)=\frakm(K)$. For this reason, we often write that the return map $\Phi$ and the return times $T_n$ are defined ($\frakm$-almost everywhere) on $K$. It follows that the probability measure \[\wh \frakm^\Phi = \frac{1}{\frakm(K)}\frakm\vert_{K}\] on $K$ is $\Phi$-invariant. We call $(\Fc_K, \wh \frakm^\Phi, \Phi)$ the \emph{first return dynamical system} associated to $(\Omega,\frakm,\phi)$ and $K$.

The following observation about first return dynamical systems is straightforward.

\begin{lemma}\label{lemma: mixing implies ergodic first return}
    If $(\Omega,\frakm,\phi)$ is mixing, then $(\Fc_K,\wh \frakm^\Phi,\Phi)$ is ergodic. \qed
\end{lemma}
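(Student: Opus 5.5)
The plan is to reduce ergodicity of the induced map to ergodicity of the time-one map $\phi^1$. We may assume $\frakm(K)>0$, since otherwise $\wh\frakm^\Phi$ is not defined. We normalize $\frakm$ to be a probability measure; this is possible because in our applications $\frakm$ is finite. Strong mixing of the flow gives, for measurable $A,B\subset\Omega$,
\[\frakm(\phi^{-n}A\cap B)\to \frakm(A)\frakm(B) \quad \text{as } n\to\infty,\ n\in\Nb~.\]
So the single transformation $\phi^1$ is mixing, and in particular ergodic, for $\frakm$. Note that $\Phi$ is the first return map of $\phi^1$ (not of the flow) to $K$, so the statement becomes the classical fact that the induced map of an ergodic measure-preserving transformation on a set of positive measure is ergodic with respect to the normalized restricted measure.

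To prove that fact, take a measurable $\Phi$-invariant set $E\subset\Fc_K$, meaning $\Phi^{-1}E=E$ modulo $\wh\frakm^\Phi$-null sets. Form its saturation under $\phi^1$ up to the first return:
\[\wt E=\bigcup_{n\geqslant 0}\phi^{-n}E~.\]
This is the set of points whose forward $\phi^1$-orbit meets $E$; equivalently, by invariance of $E$, the points whose first visit to $K$ lies in $E$. The key identity is $\wt E\cap K=E$ modulo null sets. Indeed, if $v\in K$ and $\phi^n v\in E$ for some $n\geqslant 0$, then $\phi^n v=\Phi^j v$ for some $j$. Since $\Phi^{-1}E=E$, it follows that $v\in E$, at least for $v\in\Fc_K$. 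The points of $K\setminus\Fc_K$ form a null set by Poincaré recurrence, which applies because $\frakm$ is finite and $\phi^1$-invariant.

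Next, $\wt E$ satisfies $\phi^{-1}\wt E\subset\wt E$, and $\frakm(\wt E)<\infty$. Hence $\frakm(\wt E\setminus\phi^{-1}\wt E)=0$, so $\wt E$ is $\phi^1$-invariant modulo null sets. By ergodicity of $\phi^1$, $\frakm(\wt E)\in\{0,\frakm(\Omega)\}$. Intersecting with $K$ gives $\frakm(E)\in\{0,\frakm(K)\}$, that is, $\wh\frakm^\Phi(E)\in\{0,1\}$. This is the ergodicity of $(\Fc_K,\wh\frakm^\Phi,\Phi)$.

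The only delicate points are bookkeeping. First, one must handle invariance modulo null sets carefully; this is done by replacing $E$ with $\bigcap_{k}\Phi^{-k}E$, which is strictly invariant. Second, one must use finiteness of $\frakm$, both for Poincaré recurrence and for the step $\phi^{-1}\wt E\subset\wt E\Rightarrow$ invariance. I expect no genuine obstacle: mixing is only used through its consequence that $\phi^1$ is ergodic.
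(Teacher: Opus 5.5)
Your argument is correct and is the standard one the paper has in mind when it calls the lemma straightforward (no proof is written there): restrict mixing to integer times to get ergodicity of $\phi^1$, then use that the first return map of an ergodic transformation to a set of positive measure is ergodic for the normalized restricted measure. One small bookkeeping slip: $E'=\bigcap_{k\geqslant 0}\Phi^{-k}E$ only satisfies $E'\subset\Phi^{-1}E'$, which is the wrong inclusion for your step ``$\Phi^j v\in E\Rightarrow v\in E$''; use $\bigcup_{k\geqslant 0}\Phi^{-k}E$ or $\limsup_k\Phi^{-k}E$ instead, or simply discard the countably many null sets on which $\Phi^{-j}E$ and $E$ differ.
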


Consider now another measurable space  $\wt \Omega$ with a measurable flow that we also denote by $\phi$. Let $\Gamma$ be a countable group acting measurably on $\wt \Omega$ and commuting with the flow, and suppose there is a $\Gamma$-invariant measurable map $p:\wt \Omega \to \Omega$ such that $\phi^t \circ p=p\circ \phi^t$ for all $t\in \Rb$. Suppose further that a measurable set $\wh K \subset \wt \Omega$ satisfies

\begin{enumerate}
    \item  $\gamma \wh K \cap \wh K = \emptyset$ for all $\gamma\neq e$ in $\Gamma$ and
    \item  the restriction $p:\wh K \to K=p(\wh K)$ is a measurable isomorphism.
\end{enumerate}

Under these assumptions, we can define a cocycle $\msf c: \Nb \times \Fc_K  \to \Gamma$ as follows. Given $v\in \Fc_K$, let $\wh v$ be its unique lift in $\wh K$ under $p$. Then for each $n\in \N$ there exists a unique group element $\msf c_n(v)=\msf c(v,n)\in \Gamma$ such that $\phi^{T_n(v)}(\wh v)\in \msf c_n(v)\wh K$. It easily follows that the maps $v\mapsto \msf c_n(v)$ are measurable and satisfy the cocycle relation \[\msf c_{m+n}(v) = \msf c_n(v)\ \msf c_m(\Phi^n(v)) \quad \text{ for all } v\in \Fc_K \text{ and }m,n\in \Nb~.\]
As above, for a $\phi$-invariant probability measure $\frakm$ on $\Omega$ with $\frakm(K)>0$, we have that each $\msf c_n$ is defined (almost everywhere) on $K$ and that the cocycle relation holds almost everywhere.

For most applications, $\wt\Omega$ is a locally compact and metrizable space on which $\Gamma$ acts continuously and properly discontinuously, with $p:\wt\Omega \to \Omega=\Gamma \backslash \wt\Omega$ being the quotient map. In this case $\Omega$ is also locally compact and metrizable. We also assume that the flows $\phi$ on $\wt \Omega$ and $\Omega$ are continuous, hence any $\phi$-invariant and $\Gamma$-invariant Radon measure $\wt\frakm$ on $\wt\Omega$ descends to $\phi$-invariant Radon measure $\frakm$ on $\Omega$. 

In this setting, the next lemma provides sufficient conditions for regularity of the first return map. 

\begin{lemma}\label{lemma: continuous almost everywhere}
For the systems $(\wt\Omega,\phi,\wt\frakm)$ and  $(\Omega,\phi,\frakm)$ as above, suppose that $\frakm$ is a finite measure, and let $\wh K \subset \wt\Omega$ be a compact set with $\wt\frakm(\wh K)>0$ and satisfying: 
\begin{enumerate}
    \item \label{itm:c1} $\gamma \wh K \cap \wh K=\emptyset$ for all $\gamma\neq e$ in $\Gamma$; and,
    \item \label{itm:c2} $\wt\frakm(\partial \wh K)=0$.
\end{enumerate}
Let $K=p(\wh K)$ and let $\msf c: \N \times K \to \Gamma$ be the cocycle induced by $\wh K$ and the first return map of $K$ (defined $\frakm$-almost everywhere). Then for each $n\in \N$, the map $v\mapsto \msf c_n(v)$ is continuous $\frakm$-almost everywhere.
\end{lemma}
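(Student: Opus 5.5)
We will show that, off an $\frakm$-null set, the map $v\mapsto \msf c_n(v)$ is locally constant. Since $\Gamma$ is discrete, this is the same as continuity there. The null set is built from the boundary of $K$ and its images under the flow at integer times.

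First we reduce to the lift. Since $p$ is a local homeomorphism and $\Gamma$ acts properly discontinuously, condition \eqref{itm:c1} and compactness of $\wh K$ imply that $p|_{\wh K}$ is injective. Because $\wh K$ is compact, $p|_{\wh K}$ is a homeomorphism onto $K$. Set $E=\partial \wh K$. Properness of the action implies $\partial K\subset p(E)$, since near a point of $\wh K$ the map $p$ is a homeomorphism on a neighborhood $U$ that meets only finitely many translates $\gamma\wh K$. Condition \eqref{itm:c2} and the $\Gamma$-invariance of $\wt\frakm$ then give $\frakm(\partial K)=0$, using that $p$ restricted to small open sets is measure-preserving. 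Flow invariance gives $\frakm(\phi^{-j}\partial K)=0$ for every $j\in\Z$. Let
\[
N=\bigcup_{j\in \Z}\phi^{-j}(\partial K)\cup (K\setminus \Fc_K),
\]
which is $\frakm$-null; the second set is null by Poincar\'e recurrence, since $\frakm$ is finite.

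Fix $v\in K\setminus N$ and $n\in\N$. The first step is to show that $T_n$ is locally constant near $v$. For each $j\leqslant T_n(v)$, the point $\phi^j(v)$ lies either in $\mathrm{int}\,K$ or in $\Omega\setminus K$, because it is not in $\partial K$. By continuity of $\phi^j$, this dichotomy persists on a neighborhood $V$ of $v$, for the finitely many $j=1,\dots,T_n(v)$. Since $v\notin\partial K$, the point $v$ itself lies in $\mathrm{int}\,K$, so we may also take $V\subset K$. Hence for $w\in V$ the integer times in $[1,T_n(v)]$ at which $\phi^j(w)\in K$ are exactly those for $v$, and therefore $T_k(w)=T_k(v)$ for all $k\leqslant n$.

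The second step handles the group element. Let $\wh v$ be the lift of $v$, which lies in $\mathrm{int}\,\wh K$, and write $\phi^{T_n(v)}(\wh v)\in \gamma\wh K$ with $\gamma=\msf c_n(v)$. Because $\phi^{T_n(v)}(v)\in \mathrm{int}\,K$, its lift $\gamma^{-1}\phi^{T_n(v)}(\wh v)$ lies in the interior of $\wh K$; this uses $\partial K\supset p(\partial\wh K)$ locally, which again follows from $p$ being a local homeomorphism that is injective on $\wh K$. For $w\in V$, let $\wh w=(p|_{\wh K})^{-1}(w)$, which depends continuously on $w$. Then $\phi^{T_n(v)}(\wh w)$ is close to $\phi^{T_n(v)}(\wh v)\in\gamma\,\mathrm{int}\,\wh K$. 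After shrinking $V$, it still lies in $\gamma\wh K$, and the disjointness of translates in \eqref{itm:c1} forces $\msf c_n(w)=\gamma$.

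The main obstacle is the bookkeeping between $\partial K$ and $p(\partial\wh K)$, and the assertion that $\frakm(\partial K)=0$ follows from $\wt\frakm(\partial\wh K)=0$. I expect to settle this by covering $\wh K$ with finitely many open sets on which $p$ is injective, using proper discontinuity together with $\gamma\wh K\cap\wh K=\emptyset$. On each such open set, $\frakm$ and $\wt\frakm$ correspond under $p$.
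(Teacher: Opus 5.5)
Your proof is correct and follows the paper's argument. You discard the null set of points whose orbit meets $\partial K$ at integer times; you use all flow iterates $\phi^{-j}(\partial K)$, $j\in\mathbb{Z}$, where the paper uses the return iterates $\Phi^{-j}(\partial K)$, and this difference is immaterial. Off that set you show that the return times, and hence $\msf c_n$, are locally constant. Your extra bookkeeping on $p(\partial\wh K)=\partial K$ and $\frakm(\partial K)=0$, via local finiteness of the translates $\gamma\wh K$, fills in what the paper compresses into ``our assumptions imply''.
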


\begin{proof}
Our assumptions imply that $K$ is compact with $\frakm(\partial K)=0$. For a fixed $n\in \N$, we claim that the discontinuity points of $\msf c_n$ lie on the union $A=\bigcup_{j=0}^{\infty}{\Phi^{-j}(\partial K)}$, which is $\frakm$-null by condition (2) and the $\Phi$-invariance of $\frakm\vert_K$. Consider a point $v\in \Fc_K \bs A$ with lift $\wh v\in \wh K$, and let $J=\{T_1(v),\dots,T_n(v)\}$. Since $v\notin A$, for $1\leqslant j \leqslant T_n(v)$ we have $\phi^j(v)\in \mathrm{Int}(K)$ if $j\in J$ (here $\mathrm{Int}$ denotes topological interior in $\Omega$), and $\phi^j(v)\in \Omega \bs K$ if $j\notin J$. Therefore, by the continuity of $\phi$, the lift $\wh v' \in \wh K$ of any $v'\in \Fc_K$ close enough to $v$ satisfies $\phi^j(\wh v')\in \msf c_j(v)\wh K$ for $j\in J$ and $\phi^j(v')\notin K$ for $j\in \{1,2,\dots, T_n(v)\}\bs J$. This yields $\Phi^j(v')=\Phi^j(v)$ for $1\leqslant j \leqslant n$ and $\msf c_{n}(v')=\msf c_n(v)$. Hence, $\msf c_n$ is continuous (in fact, locally constant) at $v$.
\end{proof}

In practice, we construct sets $\wh K$ satisfying the above criterion for systems with transverse sections. For this, we keep the continuity assumptions on the systems $(\wt \Omega,\phi,\frakm)$ and $(\Omega,\phi,\frakm)$ as in the previous lemma. We additionally assume that $\wt \Omega =\Cc \times \Rb$ for $\Cc$ a locally compact metrizable space, that the flow $\phi$ on $\wt \Omega$ is given by $\phi^t(x,s)=(x,s+t)$ for all $x\in \Cc,s,t\in \Rb$, that the action of $\Gamma$ on $\wt\Omega$ commutes with $\phi$, and that we have a decomposition $\wt\frakm=\sfL \otimes \mathrm{Leb}_{\Rb}$ with $\sfL$ a Radon measure on $\Cc$. Under these assumptions we have the following.

\begin{lemma}\label{lemma: good local section}
    Let $\wt\Omega=\Cc\times \Rb$ and $\wt \frakm=\sfL\otimes \mathrm{Leb}_{\Rb}$ and $\frakm$ be as above, and let $v\in \wt\Omega$ be a point in the support of $\wt \frakm$. Then for any neighborhood $U$ of $v$ in $\wt\Omega$ there exists a compact set $\wh K\subset U$ containing $v$ and such that $\wt \frakm(\wh K)>0$ and $\wt \frakm(\partial \wh K)=0$.
\end{lemma}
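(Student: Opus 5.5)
We will build $\wh K$ as a product of a small ball in $\Cc$ with a compact interval in $\Rb$, choosing both radii so that the boundaries carry no mass. Write $v=(x_0,s_0)$ with $x_0\in\Cc$ and $s_0\in\Rb$. Fix a compatible metric $d_\Cc$ on $\Cc$. Shrinking $U$, we may assume $U\supset B\times(s_0-\epsilon,s_0+\epsilon)$, where $B$ is an open $d_\Cc$-ball around $x_0$ with compact closure; this uses local compactness of $\Cc$. We then look for sets of the form
\[\wh K=\ov B_\Cc(x_0,r)\times[s_0-\epsilon/2,s_0+\epsilon/2]~,\]
where $\ov B_\Cc(x_0,r)=\{x: d_\Cc(x,x_0)\leqslant r\}$ and $r$ is smaller than the radius of $B$. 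Such a set is compact, contains $v$, and lies in $U$.

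\textbf{Radius with null sphere.} The closed balls around $x_0$ of small radius lie in the compact set $\ov B$, which has finite $\sfL$-measure since $\sfL$ is Radon. The spheres $S_r=\{x: d_\Cc(x,x_0)=r\}$ for $0<r<r_0$ are pairwise disjoint. Hence at most countably many of them have positive $\sfL$-measure, and we choose $r$ with $\sfL(S_r)=0$.

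\textbf{Boundary estimate.} The topological boundary of the product satisfies
\[\partial\wh K\subset \big(S_r\times[s_0-\epsilon/2,s_0+\epsilon/2]\big)\cup\big(\ov B_\Cc(x_0,r)\times\{s_0\pm\epsilon/2\}\big)~.\]
The first piece has $\sfL\otimes\mathrm{Leb}_\Rb$-measure $\sfL(S_r)\cdot\epsilon=0$. The second piece has measure zero because Lebesgue measure of a point is zero. Therefore $\wt\frakm(\partial\wh K)=0$.

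\textbf{Positive measure.} Since $v\in\mathrm{supp}(\wt\frakm)$, the open neighborhood $B_\Cc(x_0,r)\times(s_0-\epsilon/2,s_0+\epsilon/2)$, which lies inside $\wh K$, has positive measure. Its measure is $\sfL(B_\Cc(x_0,r))\cdot\epsilon$, so $\sfL(B_\Cc(x_0,r))>0$ and $\wt\frakm(\wh K)>0$.

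The only point requiring care is the countability argument for choosing $r$. It rests on $\sfL$ being finite on a compact neighborhood of $x_0$, which the Radon assumption supplies, so we expect no real obstacle.
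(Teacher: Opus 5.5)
Your proposal is correct and follows essentially the same route as the paper. Both proofs take $\wh K$ to be a small closed metric ball around the $\Cc$-coordinate, with radius chosen so that the sphere $S_r$ is $\sfL$-null, times a compact interval, then use $\partial \ov B_r\subset S_r$ and the support hypothesis for positivity. You also spell out two points the paper leaves implicit: the countability argument producing null spheres, and why small closed balls are compact in a locally compact metric space.
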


\begin{proof}
    Write $v=(c,s)\in \Cc \times \Rb$, so that $c$ belongs to the support of $\sfL$. For a metric compatible with the topology on $\Cc$, let $B_r$ and $S_r$ be the closed ball and sphere of radius $r$ around $c$, respectively. Then $\sfL(B_r)>0$ for all $r>0$. Since $\Cc$ is locally compact and $\sfL$ is Radon, there are infinitely many $r$, arbitrarily close to $0$, such that $\sfL(S_r)=0$. We pick such a sufficiently small value of $r$, as well as $\ep>0$ small enough such that the set $\wh K=B_r\times [s-\ep,s+\ep]$ is contained in $U$. By construction we have that $\wh K$ is compact and $\wt \frakm(\wh K)>0$, so we are only left to show that $\wt \frakm(\partial \wh K)=0$. This last fact follows from the identity $\partial \wh K=\partial B_r \times [s-\ep,s+\ep] \cup B_r\times \{s-\ep,s+\ep\}$ and the inclusion $\partial B_r\subset S_r$.
\end{proof}

\section{Exact dimensionality}\label{section: exact dimensionality}

In this section we prove \Cref{theoremalpha: exact dimensionality}, asserting the exact dimensionality of Patterson--Sullivan measures for relatively Anosov groups, when seen as measures on boundaries for Gromov models.
We do this by first proving \Cref{theorem: exact dimensionality framework}, a general theorem about exact dimensionality of measures associated to certain group actions.

\subsection{A framework theorem for exact dimensionality}\label{section: exact dimensionality framework}

Our main criterion for exact dimensionality is the following.

\begin{theorem}\label{theorem: exact dimensionality framework}
    Suppose $M$ is a measurable space, $\Gamma$ is a group acting on $M$ by measurable isomorphisms, $d$ is a premetric on $M$ such that all balls \[B_d(x,r)=\{y\in M:d(x,y)<r\}\] are measurable, and $\nu$ is a probability measure on $M$.   Assume the following data.
    \begin{enumerate}
        \item A measurable dynamical system $(\Fc,\frakm,\Phi)$, where $\Phi:\Fc \to \Fc$ is measurable and $\frakm$ is a $\Phi$-invariant probability measure.
        \item A measurable map $\pi:\Fc \to M$ such that the image of a $\frakm$-full measure set for $\Fc$ is a measurable set of $\nu$-full measure in $M$.
        \item For each $\gamma\in \Gamma$, a measurable set $O(\gamma)\subset M$, together with a constant $C\geqslant 1$ and a function $ \alpha:\Gamma\to \mathbb R_{\geqslant 0}$ such that \[C^{-1}e^{-\alpha(\gamma)}\leqslant \nu(O(\gamma))\leqslant Ce^{-\alpha(\gamma)}\quad \text{ for all }\gamma\in \Gamma~.\]
        \item A measurable cocycle $\msf c:\Nb\times \Fc\to \Gamma$, defined $\frakm$-almost everywhere, such that \[\msf c_{m+n}(v) = \msf c_n(v)\ \msf c_m(\Phi^n(v))~, \quad \text{ for all } m,n\in \Nb \text{ and }\frakm\text{-almost all }v\in \Fc~.\]
        \item A function $\beta:\Gamma \to \R_{\geqslant 0}$ and a constant $s \geqslant 0$ such that for $\frakm$-almost every $v\in \Fc$ and all $n$ large enough (depending on $v$) we have \[B_d(\pi(v),e^{-\beta(\msf c_n(v))-s})\subset O(\msf c_n(v))\subset B_d(\pi(v),e^{-\beta(\msf c_n(v))+s})~.\]
        \item Constants  $\lambda_\alpha \geqslant 0$ and $\lambda_\beta>0$ such that for $\frakm$-almost every $v\in \Fc$ we have \[\frac{\alpha(\msf c_n(v))}{n}\to \lambda_\alpha \quad \text{and} \quad \frac{\beta(\msf c_n(v))}{n}\to \lambda_\beta \quad \text{as }n\to \infty~.\]
    \end{enumerate}
    Then for $\nu$-almost every $x\in M$ we have \[\lim_{r\to 0}\frac{\log \nu(B_d(x,r))}{\log r} = \frac{\lambda_\alpha}{\lambda_\beta}~.\] That is, $\nu$ is exact dimensional with respect to $d$.
\end{theorem}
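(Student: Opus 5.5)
Fix $v$ in the $\frakm$-full measure set $\Fc'\subset\Fc$ on which the cocycle relation holds, the nesting in (5) holds for all $n\geqslant n_0(v)$, and both limits in (6) exist. By (2), $\pi(\Fc')$ contains a measurable $\nu$-full set. So it suffices to show the dimension limit at every $x=\pi(v)$ with $v\in\Fc'$. Set $\gamma_n=\msf c_n(v)$, $\alpha_n=\alpha(\gamma_n)$, $\beta_n=\beta(\gamma_n)$, and $r_n^\pm=e^{-\beta_n\pm s}$. Then (3) and (5) give, for $n\geqslant n_0$,
\[\nu(B_d(x,r_n^-))\leqslant \nu(O(\gamma_n))\leqslant Ce^{-\alpha_n},\qquad \nu(B_d(x,r_n^+))\geqslant \nu(O(\gamma_n))\geqslant C^{-1}e^{-\alpha_n}.\]
The plan is to interpolate these two bounds to all small $r$, using the monotonicity of $r\mapsto\nu(B_d(x,r))$.

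First I control the sequence $\beta_n$. Since $\beta_n/n\to\lambda_\beta>0$, we have $\beta_n\to\infty$, so $r_n^\pm\to0$. We also have $\beta_{n+1}-\beta_n=o(n)$, and the same holds for $\alpha$ since $\alpha_n/n\to\lambda_\alpha$. The sequence $\beta_n$ need not be monotone, so I avoid consecutive indices. Given small $r$, let $n(r)$ be the largest $n\geqslant n_0$ with $r\leqslant r_n^-$, equivalently $\beta_n\leqslant -\log r - s$. This set of $n$ is finite because $\beta_n\to\infty$, and it is nonempty once $r$ is small. Moreover $n(r)\to\infty$ as $r\to0$.

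By maximality, $\beta_{n(r)+1}>-\log r-s$, so $r>r_{n(r)+1}^-$. This gives the upper bound
\[\nu(B_d(x,r))\leqslant \nu(B_d(x,r^-_{n(r)}))\leqslant Ce^{-\alpha_{n(r)}}.\]
For the lower bound, let $m(r)$ be the smallest $m\geqslant n_0$ with $\beta_m\geqslant -\log r+s$, i.e.\ $r_m^+\leqslant r$. This gives $\nu(B_d(x,r))\geqslant C^{-1}e^{-\alpha_{m(r)}}$. I also need $-\log r$ to be comparable to $\beta_{n(r)}$ and $\beta_{m(r)}$ up to $o(n)$. From $\beta_{n(r)}\leqslant -\log r-s<\beta_{n(r)+1}$ we get $-\log r=\lambda_\beta n(r)+o(n(r))$. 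Similarly $m(r)$ satisfies $\beta_{m(r)-1}<-\log r+s\leqslant\beta_{m(r)}$, once $m(r)>n_0$, which holds for small $r$. Hence $-\log r=\lambda_\beta m(r)+o(m(r))$ as well.

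Dividing and taking limits then yields
\[\frac{\log\nu(B_d(x,r))}{\log r}\leqslant\frac{\alpha_{m(r)}+\log C}{\lambda_\beta m(r)+o(m(r))}\to\frac{\lambda_\alpha}{\lambda_\beta}.\]
The reverse inequality follows in the same way with $n(r)$ in place of $m(r)$. I expect the main technical point to be this interpolation step: there is no monotonicity of $\beta(\msf c_n(v))$, and gaps in the scales $r_n$ must not spoil the limit. This is handled by the first-passage and last-passage indices together with the sublinear increments. Note also that when $\lambda_\alpha=0$ the argument still works, since only $\lambda_\beta>0$ is used to make the denominators grow linearly.
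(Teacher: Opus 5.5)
Your proposal is correct and follows essentially the same route as the paper. Both proofs fix a generic $v$, compare the balls $B_d(x,r)$ with the shadows $O(\msf c_n(v))$ at threshold-crossing indices, and use $\beta_n/n\to\lambda_\beta>0$ to show that consecutive scales agree to first order. The only difference is cosmetic: you use a last-passage index for the upper bound and write the scale comparison as $-\log r=\lambda_\beta n(r)+o(n(r))$, whereas the paper uses the first-passage index minus one and the limit $\log q_n^+/\log q_{n-1}^+\to 1$.
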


\begin{proof}
    Let $E\subset \Fc$ be a measurable set of full $\frakm$-measure on which all the conditions described above hold simultaneously for every $n\in \Nb$. By condition (2), the set $\pi(E)$ is measurable and has full $\nu$-measure. It is therefore enough to prove the conclusion for every point of the form $x = \pi(v)$, for some $v\in E$.

    Fix such a $v\in E$, and write $x = \pi(v)$, $\gamma_n = \msf c_n(v)$, $\alpha_n = \alpha(\gamma_n)$, and $\beta_n = \beta(\gamma_n)$. Then condition (6) implies \[\frac{\alpha_n}{n}\to \lambda_\alpha\quad \text{ and }\quad \frac{\beta_n}{n}\to \lambda_\beta>0 \quad \text{ as }n\to \infty.\]
    We write $q_n^-=e^{-(\beta_n+s)}$ and $q_n^+=e^{-(\beta_n-s)}$, and note that $q_n^{\pm}\to 0$ as $n\to \infty$. Then conditions (3) and (5) translate to \[ C^{-1}e^{-\alpha_n}\leqslant \nu(O(\gamma_n))\leqslant Ce^{-\alpha_n}\quad \text{and} \quad B_d(x,q_n^-)\subset O(\gamma_n)\subset B_d(x,q_n^+)~\]
    for all $n$ large enough. Hence $\nu(B_d(x,q_n^-))\leqslant Ce^{-\alpha_n}$ and $\nu(B_d(x,q_n^+))\geqslant C^{-1}e^{-\alpha_n}$.

    Let $r\in (0,1)$ and set \[n^+(r)=\min\{n\geqslant 2: q_n^+\leqslant r\} \quad \text{and }\quad n^-(r)=\min\{n\geqslant 2: q_n^-\leqslant r\}~.\] Then $q_{n^\pm(r)}^\pm  \leqslant r \leqslant q_{n^\pm(r)-1}^\pm$ and we have \[O(\gamma_{n^+(r)})\subset B_d(x,q_{n^+(r)}^+)\subset B_d(x,r) \subset B_d(x,q_{n^-(r)-1}^-) \subset O(\gamma_{n^-(r)-1})~.\] Thus for all $r$ small enough, we have \[\frac{-\alpha_{n^-(r)-1} + \log C}{\log q_{n^-(r)}^-}\leqslant \frac{\log \nu(B_d(x,r))}{\log r} \leqslant \frac{-\alpha_{n^+(r)} - \log C}{\log q_{n^+(r)-1}^+}~.\]
    Also, notice that \begin{align*}
         & \lim_{n \to \infty} \frac{\log q_n^+}{\log q_{n-1}^+} = \lim_{n \to \infty} \frac{\beta_n-s}{\beta_{n-1}-s} 
         =  \lim_{n \to \infty} \left(\frac{\beta_n - s}{n}\right) \left(\frac{\beta_{n-1} - s}{n-1}\right)^{-1}  = 1~.
    \end{align*}
    Then condition (6) yields
    \begin{align*}
        & \limsup_{r \to 0^+}\frac{-\alpha_{n^+(r)} - \log C}{\log q_{n^+(r)-1}^+} \leqslant \limsup_{n \to \infty} \frac{-\alpha_{n} - \log C}{\log q_{n-1}^+} = \limsup_{n \to \infty} \frac{-\alpha_{n} - \log C}{\log q_{n}^+} \\
         &= \limsup_{n \to \infty} \frac{-\alpha_{n} - \log C}{-(\beta_n-s)} = \limsup_{n \to \infty} \frac{\alpha_{n}/n}{\beta_n/n}=\frac{\lambda_\al}{\lam_\beta}~.
    \end{align*}
    Similarly, we deduce that 
    \begin{align*}
        \liminf_{r \to 0^+}\frac{-\alpha_{n^-(r)-1} + \log C}{\log q_{n^-(r)}^-} & \geqslant \liminf_{n \to \infty} \frac{-\alpha_{n-1} + \log C}{\log q_{n-1}^-}  = \liminf_{n \to \infty} 
        \frac{-\alpha_{n-1}+\log C}{-(\beta_{n-1}+s)} = \frac{\lambda_\al}{\lambda_\beta}~.
    \end{align*}
    Combining these facts we conclude that \[\lim_{r\to 0}\frac{\log \nu(B_d(x,r))}{\log r} = \frac{\lambda_\al}{\lam_\beta}~,\]
    as desired.
\end{proof}

\subsection{Exact dimensionality for Gromov models}\label{section: exact dimensionality from Gromov models}
Recall the setting from \Cref{section: preliminaries} and let $\Gamma<\msf G$ be a $\theta$-Anosov group relative to a collection $\Pc$ of subgroups.
Let $X_{\GM}$ be a Groves--Manning cusp space of $(\Gamma,\Pc)$ and fix a basepoint $o\in X_{\GM}$.
Suppose $(Y, \msf d_Y)$ is another Gromov model for $(\Gamma,\Pc)$ that is orbit-quasi-isometric to $X_{\GM}$. That is, for a (any) basepoint $y_0\in Y$, there exist constants $C\geqslant 1$ and $c\geqslant 0$ such that for any $\gamma\in \Gamma$ we have \[C^{-1}\msf{d}_Y(y_0, \gamma y_0)-c \leqslant \msf{d}_{\GM}(o,\gamma o) \leqslant C \msf{d}_Y(y_0, \gamma y_0) + c~.\] Note that we only need the orbit-quasi-isometry condition in the proof, however, one can verify that an orbit-quasi-isometry extends to a quasi-isometry in this case.
Let $\vis{d}_Y$ be the visual quasi-metric on $\partial Y$ as defined in \Cref{section: visual metrics and shadows}. It induces a quasi-metric on $\Lambda_\theta$ via the homeomorphism $f$, which we still denote by $\vis{d}_Y$. Let $f_{\GM}:\partial X_{\GM} \to \Lambda_\theta$ and $f_Y: \partial Y \to \Lambda_\theta$ be the limit maps.

Let $\psi\in \fraka_\theta^\ast$ be a $(\Gamma,\theta)$-proper linear form that is tangent to the $\theta$-growth indicator of $\Gamma$, and let $\nu_\psi$ be the Patterson--Sullivan measure associated to $\psi$ as in \Cref{definition: Patterson--Sullivan measures}. We proceed to prove \Cref{theoremalpha: exact dimensionality}, asserting that the measure $\nu_\psi$ is exact dimensional with respect to $\vis{d}_Y$.

\begin{proof}[Proof of \Cref{theoremalpha: exact dimensionality}]
    Let $M=\Lam_\theta$, $d=\vis{d}_Y$ and $\nu=\nu_\psi$. We prove the theorem by verifying the conditions in Theorem~\ref{theorem: exact dimensionality framework}.

\medskip \noindent \textbf{(1) The dynamical system.} We choose a compact Borel set $K\subset \Omega_\psi$ satisfying $0<\frakm_\psi(K)<\infty$, which exists by \Cref{theorem: BMS finite and strongly mixing}. Let $(\Fc,\frakm,\Phi)=(\Fc_K,\wh\frakm^\Phi,\Phi)$ be the first return dynamical system associated to $(\Omega,\frakm_\psi,\phi)$ and $K$. 

\medskip \noindent \textbf{(2) The projection.} Let $\wh K \subset \wt \Om_\psi$ be a precompact Borel lift of $K$ for the projection $p_\psi:\wt\Om_\psi\ra \Om_\psi$. For $v\in K$, we let $\wh v\in \wh K$ be its unique lift, and set $\pi(v)={\wh v}^+\in \Lam_\theta$ as in \Cref{section: preliminary on fibered dynamical systems}. Condition~(2) then follows by Poincar\'e recurrence and the fact that $\wt\frakm_\psi$ is equivalent to \[\nu_\psi\otimes \wh \nu_\psi \otimes \mathrm{Leb}_\R\] on $\wt\Omega_\psi$.

\medskip \noindent \textbf{(3) The shadow sets.}
    Let $R\geqslant 0$ be a sufficiently large constant to be determined later. For each $\gamma\in \Gamma$ we define \[O(\gamma)=f_{\GM}(O^{\GM}_R(o,\gamma o))\subset \Lambda_\theta~,\]  and we set $\alpha(\gamma)=\psi(\mu_\theta(\gamma))$. By \Cref{lemma: the comparison for relatively Anosov}, there exist $r_1,r_2>0$ such that \[O_{r_1}^\theta(o',\gamma o')\cap \Lambda_\theta \subset O(\gamma)\subset O_{r_2}^\theta(o',\gamma o')\cap \Lambda_\theta~.\] Then by \Cref{lemma: the shadow lemma for relatively Anosov}, there exists $C\geqslant 1$ such that \[C^{-1}e^{-\alpha(\gamma)} \leqslant \nu_\psi(O(\gamma)) \leqslant C e^{-\alpha(\gamma)} \quad \text{ for all } \gamma\in \Gamma~.\] Thus condition~(3) holds.

\medskip \noindent \textbf{(4) The cocycle.} We let $\msf c_n$ be the induced cocycle associated to $(\Omega_\psi,\frakm_\psi,\phi)$ and $\wh K$, as in \Cref{section: first return}. 

\medskip \noindent \textbf{(5) Visual balls comparison.} Recall the notation from \Cref{section: preliminary on fibered dynamical systems}, and consider the precompact set \[Q=\{\sigma(0):\sigma\in \wt \Psi^{-1}(\wh K)\}\subset X_{\GM}~.\] Recall also that $T_n$ refers to $n$-th return time of the dynamical system $(\Fc,\frakm, \Phi)$.

For a point $v\in \Fc$, we choose any geodesic $\sigma_{\wh v}\in \Gc(X_{\GM})$ with $\Psi(\sigma_{\wh v})=\wh v$, and for each $n\geqslant 1$, choose a number $u_n(v)$ so that \[\wt \sft(\sigma_{\wh v},u_n(v))=T_n(v)~,\] where $\wt \sft$ is given by \Cref{theorem: the flow space for relatively Anosov}. Note that $u_n(v)\geqslant 0$ for all $n$ large enough (depending on $v$) by \Cref{theorem: the flow space for relatively Anosov}.

   As $\sigma_{\wh v}(0)\in Q$ and $\sfc_n(v)^{-1}\sigma_{\wh v}(u_n(v))\in Q$, we have \[\msf{d}_{\GM}(\sigma_{\wh v}(u_n(v)),\sfc_n(v)\sigma_{\wh v}(0)) = \msf{d}_{\GM}(\sfc_n(v)^{-1}\sigma_{\wh v}(u_n(v)),\sigma_{\wh v}(0)) \leqslant \diam (Q)~.\] Moreover $\sigma_{\wh v}([0,\infty))$ is a geodesic ray from $\sigma_{\wh v}(0)$ to $\pi(v)$. Therefore, there is a constant $R_0\geqslant 0$ such that $\pi(v) \in O^{\GM}_{R_0}(o,\msf c_n(v)o)$ for any $v\in \Fc$ and $n\geqslant 1$ large enough (depending on $v$). In addition, by \Cref{lemma: shadow and visual ball comparison} and \Cref{lemma: shadows comparison} there exist constants $R_1,R_2,R_3,s_0\geqslant 0$ such that $O^{\GM}_{R_0}(o,\gamma o) \subset O^Y_{R_1}(y_0,\gamma y_0)$ and hence $\pi(v) \in O^Y_{R_1}(y_0,\msf c_n(v)y_0)$, and \[B_{\vis{d}_Y}(f_Y^{-1}(\pi(v)),e^{-\msf d_Y(y_0,\msf c_n(v) y_0)- s_0}) \subset O^Y_{R_1}(y_0,\msf c_n(v) y_0) \subset (f_Y^{-1}\circ f_{\GM})(O_{R_2}^{\GM}(o,\msf c_n(v)o))\]\[  \subset O^Y_{R_3}(y_0,\msf c_n(v) y_0) \subset B_{\vis{d}_Y}(f_Y^{-1}(\pi(v)),e^{-\msf d_Y(y_0,\msf c_n(v) y_0) + s_0})~.\] 
   Then condition~(5) follows by setting $\beta(\gamma) = d_Y(y_0,\gamma y_0)$ for $\gamma\in \Gamma$.

\medskip \noindent \textbf{(6) Existence of asymptotic limits.} We first prove the statement for $\al$. Let $C=C_Q$ be the constant from \Cref{theorem: the flow space for relatively Anosov}~(5), and for $v\in \Fc$ we consider $\sigma_{\wh v}\in \Gc(X_{\GM})$ and the numbers $u_n(v)$ as in the proof of condition~(5).

Since $\sigma_{\wh v}(0), \ \msf c_n(v)^{-1}\sigma_{\wh v}(u_n(v))\in Q$ for all $n$ large enough, by \Cref{theorem: the flow space for relatively Anosov}~(5) we have \begin{equation}\label{eq:T_n}\abs{T_n(v)-\psi(\mu_\theta(\msf c_n(v)))}\leqslant C_Q~.\end{equation}
    In addition, the measure $\frakm_\psi$ is mixing by \Cref{theorem: BMS finite and strongly mixing}, and hence $\frakm=\wh \frakm^\Phi$ is ergodic for the first return map $\Phi$ by \Cref{lemma: mixing implies ergodic first return}. Therefore, Kac's lemma implies that \[\frac{T_n(v)}{n}\to \frac{1}{\frakm_\psi(K)}=\lam_\al \quad \text{as} \quad n\to \infty \quad \text{for $\frakm$-almost every } v\in \Fc~.\]  Hence, from \eqref{eq:T_n} we deduce \[\frac{\alpha(\msf c_n(v))}{n} = \frac{\psi(\mu_\theta(\msf c_n(v)))}{n} \to \lam_\al~\]
    as $n\to \infty$ for $\frakm$-almost every $v\in \Fc$. 

    For the statement for $\beta$, we apply \Cref{theorem: the flow space for relatively Anosov}~(3) to find constants $a,a',B>0$ such that 
    \[a \msf{d}_{\GM}(o, \msf c_n(v) o) -B \leqslant T_n(v) \leqslant a'\msf{d}_{\GM}(o, \msf c_n(v) o) + B~\]
    for all $n$ and $\frakm$-almost every $v\in \Fc$. By assumption, $X_{\GM}$ and $Y$ are orbit-quasi-isometric, so we can find constants $A_0\geqslant 1$, and $B_0\geqslant 0$ such that \begin{equation}\label{eq:quasiisometry} A_0^{-1}T_n(v) - B_0 \leqslant \beta(\msf c_n(v)) = \msf d_Y(y_0, \msf c_n(v)y_0) \leqslant A_0 T_n(v) + B_0~.\end{equation}
    The upper bound of $\beta(\msf c_n(v))$ in \eqref{eq:quasiisometry} then implies that $\beta\circ \msf c_1$ is $\frakm$-integrable. Then by the subadditivity of $\beta\circ \msf c_n$, Kingman's ergodic theorem, and the ergodicity of $\frakm$ from \Cref{lemma: mixing implies ergodic first return}, there exists a constant $\lam_\beta$ such that \[\frac{\beta(\msf c_n(v))}{n}\to \lam_\beta \quad \text{as} \quad n\to \infty \quad \text{for $\frakm$-almost every } v\in \Fc~.\] The lower bound of $\beta(\msf c_n(v))$ in \eqref{eq:quasiisometry} implies that $\lambda_\beta>0$. This concludes the proof of condition~(6) and hence of the theorem.
\end{proof}

\section{Hyperbolicity of the scalar Cartan metrics}\label{section: hyperbolicity of relatively Morse}

In this section we prove \Cref{theoremalpha: relativemorse implies hyperbolic}, asserting that proper linear linear forms on relatively Morse groups induce Gromov hyperbolic metrics. We then apply this result in combination with \Cref{theoremalpha: exact dimensionality} to deduce \Cref{corollaryalpha: premetric exact dimensionality}.

\subsection{Relatively Morse groups}\label{section: relatively Morse groups}

We first recall the following notions from a series of papers by Kapovich--Leeb \cite{kapovich2017discrete, kapovich2018finsler, kapovich2018relativizing} and Kapovich--Leeb--Porti \cite{kapovich2014morse, kapovich2016some, kapovich2017anosov, kapovich2017discrete, kapovich2018morse}.

Let $\tau\in\Fc_\theta$ and $x\in \msf G / \msf K$. The $\theta$-Weyl cone based at $x$ and pointing toward $\tau$, denoted by $V(x,\tau)\subset \msf G/\msf K$, is the union of all full Weyl chambers based at $x$ (seen as subsets of $\msf G/\msf K$ via the exponential map) whose ideal chambers project to $\tau$ via the canonical projection $\Fc \to \Fc_\theta$. Given another point $y\in \msf G / \msf K$, write $x = g \msf K$ and $y=g\exp(H)\msf K$ for some $g\in \msf G$ and $H\in\overline{\fraka^+}$. We say that the pair $(x,y)$ is $\theta$-\emph{regular} if $\alpha(H)>0$ for any $\alpha\in \theta$. If $(x,y)$ is a $\theta$-regular pair, let $\tau_+(x,y) = g \msf P_\theta \in\Fc_\theta$ be the forward flag determined by the oriented segment from $x$ to $y$, and let $\tau_-(y,x)=gw_0P_{i(\theta)}\in\Fc_{i(\theta)}$ be the opposite backward flag. Note that the definitions of $\tau_+(x,y)$ and $\tau_-(y,x)$ are independent of the choice of $g$ and $H$.

\begin{definition}[Diamonds]
    Suppose $(x,y)$ is a $\theta$-regular pair in $\msf G / \msf K$. The $\theta$-\emph{diamond with extremities} $x,y$ is \[\Diam^\theta(x,y) = V(x,\tau_+(x,y)) \cap V(y,\tau_-(y,x))\subset \msf G/\msf K~.\]
\end{definition}

\begin{definition}[Morse quasi-isometric embedding]
    Let $(Y,\msf{d}_Y)$ be a geodesic metric space. A map $F:Y\to \msf G / \msf K$ is called a $\theta$-\emph{Morse quasi-isometric embedding} if it is a quasi-isometric embedding and there exists $M\geqslant 0$ such that for every geodesic segment $[x,y]\subset Y$, one has that $F([x,y])$ is contained in the $M$-neighborhood of $\Diam^\theta(F(x),F(y))$ (with respect to the canonical $\msf G$-invariant Riemannian distance on $\msf G/ \msf K$).
\end{definition}

\begin{definition}[Relatively Morse groups]
    Suppose $\Gamma <\msf G$ and $\Gamma$ is hyperbolic relative to a collection $\Pc$ of subgroups.
    We say $\Gamma$ is $\theta$-\emph{Morse relative to a Gromov model} $Y$ of $(\Gamma,\Pc)$, if there exists a $\Gamma$-equivariant $\theta$-Morse quasi-isometric embedding $F:Y\to \msf G / \msf K$. We say $\Gamma$ is a \emph{relatively $\theta$-Morse group} if there exists such a Gromov model $Y$. 
\end{definition}

\begin{theorem}[{\cite[Theorem~8.3]{kapovich2018relativizing}, \cite[Proposition~4.4]{zhu2022relatively}}]
    Relatively $\theta$-Morse groups are relatively $\theta$-Anosov.
\end{theorem}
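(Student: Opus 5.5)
The plan is to verify directly the characterization of relatively $\theta$-Anosov groups recalled in \Cref{section: preliminary on relative anosov}: $\theta$-divergence, transversality of $\Lambda_{\theta\cup i(\theta)}$, and a $\Gamma$-equivariant homeomorphism $\partial(\Gamma,\Pc)\to\Lambda_{\theta\cup i(\theta)}$. The route follows Kapovich--Leeb: first show that a relatively $\theta$-Morse group is relatively asymptotically embedded, then translate this into the Zhu--Zimmer definition. Since $\Diam^\theta$ is defined by Weyl cones for both $\tau_+$ and $\tau_-$, I would first replace $\theta$ by $\theta\cup i(\theta)$. The diamond $\Diam^\theta(x,y)$ is symmetric under swapping the extremities, so the Morse property for $\theta$ should give it for $i(\theta)$, and hence for $\theta\cup i(\theta)$. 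From now on I assume $\theta=i(\theta)$.

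\textbf{Step 1: divergence.} Let $F:Y\to\msf G/\msf K$ be the $\Gamma$-equivariant $\theta$-Morse quasi-isometric embedding. Taking $y_0$ in the taut part of $Y$, I would extend the segment $[y_0,\gamma y_0]$ to a longer rough geodesic. Its image lies in an $M$-neighborhood of a diamond, and because $F$ is a quasi-isometric embedding the image advances linearly in distance. The key input is the Kapovich--Leeb--Porti lemma: a long segment staying uniformly close to a $\theta$-diamond has $\theta$-regular displacement, with $\alpha(\mu(g))\gtrsim d(x,gx)-\mathrm{const}$ for each $\alpha\in\theta$. Combined with properness of the action on $Y$, this gives $\alpha(\mu(\gamma))\to\infty$ as $\gamma\to\infty$.

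\textbf{Step 2: the boundary map.} For $\xi\in\partial Y$, take a geodesic ray $r$ from $y_0$ to $\xi$. For $s<t$, the Morse condition puts $F(r([0,t]))$ near $\Diam^\theta(F(y_0),F(r(t)))$. Nesting of diamonds, a standard Morse-lemma argument, should then show that the flags $\tau_+(F(y_0),F(r(t)))$ form a Cauchy sequence in $\Fc_\theta$, converging exponentially in $t$. Call the limit $f(\xi)$. The same estimates give:
\begin{itemize}
\item independence of the ray chosen;
\item continuity of $f$;
\item $\Gamma$-equivariance;
\item $f(\xi)\in\Lambda_\theta$, since rays are approximated by orbit points $\gamma_n y_0$ and $\tau_+(o',\gamma_n o')$ tends to the limit of $\gamma_n$ in $\Fc_\theta$ by divergence.
\end{itemize}
For parabolic points, the horoball part of the Gromov model is still quasi-isometrically embedded with the Morse property, so the same construction applies. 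Surjectivity onto $\Lambda_\theta$ follows from the convergence-group property: every accumulation point of $\Gamma$ in $\Fc_\theta$ is a limit of some $\gamma_n$, and $\gamma_n y_0$ subconverges in $Y\cup\partial Y$.

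\textbf{Step 3: transversality (main obstacle).} For $\xi\neq\eta$, take a bi-infinite geodesic from $\eta$ to $\xi$ in $Y$; its $F$-image stays near diamonds $\Diam^\theta(F(\sigma(-t)),F(\sigma(t)))$. As $t\to\infty$ these diamonds converge to a parallel set between $f(\eta)$ and $f(\xi)$, which forces the two flags to be opposite. This also gives injectivity, and with compactness of $\partial Y$ the map is a homeomorphism.

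The hardest point is making the diamond limit argument uniform. One needs that the $M$-neighborhood of long diamonds through a fixed bounded region can only accumulate at transverse pairs. I would handle this by cocompactness: translate by $\gamma$ so that $\sigma(0)$ lies in a fixed compact set, then argue by contradiction using a limiting bi-infinite Morse quasigeodesic. Near the parabolic horoballs, where the action is not cocompact, I expect to defer to \cite[Theorem~8.3]{kapovich2018relativizing} for relative asymptotic embeddedness and then to \cite[Proposition~4.4]{zhu2022relatively} for the equivalence with relatively $\theta$-Anosov.
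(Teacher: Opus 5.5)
Your plan ends up resting on the same two results the paper cites for this statement, namely \cite[Theorem~8.3]{kapovich2018relativizing} (relatively Morse implies relatively asymptotically embedded) and \cite[Proposition~4.4]{zhu2022relatively} (which identifies this with being relatively $\theta$-Anosov), so it is essentially the paper's argument, which consists only of these citations. Your Steps~1--3 are a rough reconstruction of the Kapovich--Leeb argument rather than a self-contained proof, and two points in them are off. First, the uniform bound $\alpha(\mu(\gamma))\gtrsim \msf d(o',\gamma o')$ in Step~1 does not follow from closeness to $\theta$-diamonds as defined here, because a $\theta$-regular segment lies in its own $\theta$-diamond however close its direction is to a wall; that bound is built into Kapovich--Leeb's definition of Morse maps, whose diamonds are modelled on a compact subset of the open star of the type. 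Second, for parabolic $\xi$ the ray is not shadowed by orbit points, so showing $f(\xi)\in\Lambda_\theta$ instead needs a sequence $\gamma_n y_0\to\xi$ together with fellow-travelling of $[y_0,\gamma_n y_0]$ with the ray.
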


\subsection{Hyperbolicity of relatively Morse groups}
We prove \Cref{theoremalpha: relativemorse implies hyperbolic} in this section. Let $\Gamma <\msf G$ be a $\theta$-Morse group relative to $Y$, a Gromov model for $(\Gamma,\Pc)$. We assume that $\theta$ is symmetric, i.e., $\theta = i(\theta)$, and fix $\psi\in \fraka_\theta^\ast$ a $(\Gamma,\theta)$-proper linear form. For any $g \msf K, h \msf K \in \msf G / \msf K$, we define \[D_\psi(g \msf K,h \msf K) = \psi(\mu_\theta(g^{-1}h)),\] which is independent of the choice of $g$ and $h$ as the Cartan
projection is $\msf K$-invariant. We also define $\msf{d}_\psi:Y\times Y \to \Rb$ according to \[\msf{d}_\psi(x,y) = D_\psi(F(x),F(y)).\]

For the proof of \Cref{theoremalpha: relativemorse implies hyperbolic}, we require the following criterion for hyperbolicity, which follows from the existence of quasi-centers in geodesic hyperbolic spaces (see for instance the proof of \cite[Lemma~6.3]{cantrell-reyes.manhattan}).

\begin{lemma}\label{lemma: criterion for hyperbolicity}
    Let $(Y,\msf{d}_Y)$ be a geodesic $\delta$-hyperbolic space, and let $D:Y\times Y\to \Rb$ be a symmetric function such that:
    \begin{enumerate}
        \item $D$ is bounded below by a constant.
        \item (locally bounded) For any $R_0 \geqslant 0$, there exists $R \geqslant 0$ such that whenever $\msf{d}_Y(x,y)\leqslant R_0$, one has $D(x,y)\leqslant R$.
        \item (coarsely additive near geodesics) For any $R_0 \geqslant 0$, there exists $R \geqslant 0$ such that whenever $z$ is in the $R_0$-neighborhood of a geodesic segment between $x$ and $y$, one has \[\abs{D(x,z)+D(z,y)-D(x,y)} \leqslant R~.\]
    \end{enumerate}
    Then there exists $C>0$ such that 
    \begin{equation*}
    \overline D(x,y)=\begin{cases}
        \ D(x, y)+ C & \text{ if } x\neq y\\
        \ 0 & \text{ if } x=y
\end{cases}
    \end{equation*} 
    defines a roughly geodesic, Gromov hyperbolic metric on $Y$. \qed
\end{lemma}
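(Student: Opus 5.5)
Adding a constant $C$ to $D$ off the diagonal should repair the triangle inequality, since coarse additivity near geodesics gives $D(x,y)\le D(x,z)+D(z,y)+R'$ up to controlled error. Hyperbolicity then comes from comparing Gromov products of $D$ with those of $\msf d_Y$ via quasi-centers. The plan has three steps.

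\textbf{Step 1: Gromov products for $D$.} Set
\[\langle y,z\rangle_x := \tfrac12\bigl(D(x,y)+D(x,z)-D(y,z)\bigr).\]
Fix $x,y,z$ and a quasi-center $w$ of the geodesic triangle $[x,y]\cup[y,z]\cup[z,x]$, i.e.\ a point within $\delta$ of all three sides.
\begin{itemize}
\item Applying condition (3) with $R_0=\delta$ to each side gives $D(x,y)\approx D(x,w)+D(w,y)$, and similarly for the other two sides, with errors at most $R$.
\item Hence $\langle y,z\rangle_x$ agrees with $D(x,w)$ up to a uniform error.
\item Therefore $\min\{\langle y,z\rangle_x,\langle z,t\rangle_x\}-\langle y,t\rangle_x$ can be compared with $D(x,\cdot)$ evaluated at quasi-centers.
\end{itemize}

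\textbf{Step 2: the four-point condition.} The main work is showing
\[\langle y,t\rangle_x\ge \min\{\langle y,z\rangle_x,\langle z,t\rangle_x\}-\delta'.\]
For this, use the $\delta$-hyperbolicity of $Y$ through its tree approximation. For four points $x,y,z,t$, take the approximating tree and quasi-centers $w_{yz},w_{zt},w_{yt}$ viewed from $x$. In the tree, the branch point for $\{y,t\}$ lies on $[x,y]$ at least as far from $x$ as the nearer of the branch points for $\{y,z\}$ and $\{z,t\}$, up to $O(\delta)$.

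Coarse additivity of $D$ along $[x,y]$ makes $D(x,\cdot)$ coarsely monotone along this segment: if $w'$ lies between $x$ and $w$ near $[x,y]$, then $D(x,w)\approx D(x,w')+D(w',w)\ge D(x,w')-c$, using condition (1). Quasi-centers that are close in $\msf d_Y$ have close $D$-values by condition (2). This yields the four-point inequality for $D$ with a uniform constant.

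\textbf{Step 3: metric properties of $\overline D$.} Symmetry is assumed. For the triangle inequality with $x,y,z$ distinct, Step 1 gives
\[D(x,z)+D(z,y)-D(x,y)\approx 2D(z,w)\ge -c.\]
So $\overline D(x,z)+\overline D(z,y)-\overline D(x,y)=D(x,z)+D(z,y)-D(x,y)+C\ge 0$ once $C\ge c$. The cases where two of the points coincide, and positivity of $\overline D$, follow from condition (1) for $C$ large. Adding a constant off the diagonal changes Gromov products by at most $C/2$, so $\overline D$ is Gromov hyperbolic by Step 2.

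\textbf{Rough geodesicity.} Take the image of a $\msf d_Y$-geodesic from $x$ to $y$ and discretize it at unit $\msf d_Y$-steps. Additivity makes $D(x,\cdot)$ coarsely increase along it with increments bounded by condition (2). However, it need not increase continuously, and $Y$ is not a geodesic space for $D$. One then passes to a roughly geodesic statement in the sense of the paper's convention (Remark~\ref{remark: gromov models}): points on the segment realize $D$-distances additively up to bounded error, which gives a $(1,c)$-rough geodesic after reparametrization.

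I expect the main obstacle to be this rough geodesicity with a \emph{$(1,c)$} constant. Additivity only gives a rough geodesic parametrized along a discrete set with bounded gaps, so I would verify that bounded gaps suffice for the paper's notion. The coarse monotonicity in Step 2 is the other delicate point.
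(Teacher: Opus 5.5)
Your proposal is correct and is essentially the argument the paper alludes to (the paper gives no proof beyond citing the existence of quasi-centers, as in Cantrell--Reyes). The estimate $\langle y,z\rangle_x\approx D(x,w)$ at a quasi-center, coarse monotonicity of $D(x,\cdot)$ along $\msf d_Y$-geodesics, and the four-point condition in $Y$ give hyperbolicity; your bounded-gap reparametrization also already yields a $(1,c)$-quasi-geodesic in the paper's sense, which requires no continuity. One small fix: closeness of $D(x,\cdot)$ at nearby quasi-centers does not follow from condition (2) alone, since $D$ has no a priori triangle inequality, so instead apply condition (3) to $[x,w]$ with $w'$ near its endpoint and then bound $D(w',w)$ from above and below using (2) and (1).
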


\begin{proof}[Proof of \Cref{theoremalpha: relativemorse implies hyperbolic}]
    We first verify that $D=\msf{d}_\psi$ satisfies the assumptions in \Cref{lemma: criterion for hyperbolicity}. Note that $\msf{d}_\psi$ is symmetric by our assumption on $\theta$ and bounded below by our choice of $\psi$.

    To verify that that $\msf{d}_\psi$ is locally bounded, we let $\msf{d}_{\msf G / \msf K}$ denote the Riemannian distance on $\msf G / \msf K$. Then there exists a constant $C>0$ such that $D_\psi \leqslant C \msf{d}_{\msf G / \msf K}$. We assume that $A\geqslant 1$ and $B\geqslant 0$ are the constants such that $F$ is an $(A,B)$-quasi-isometric embedding. Therefore, if $\msf{d}_Y(x,y)\leqslant R_0$, then
    \begin{align*}
        \msf{d}_\psi(x,y) & = D_\psi(F(x),F(y)) \leqslant C \msf{d}_{\msf G / \msf K}(F(x),F(y)) \\ & \leqslant C(A \msf{d}_Y(x,y) +B)\leqslant C(AR_0+B) = R_1~.
    \end{align*}

    Next we verify that $\msf{d}_\psi$ is coarsely additive near geodesics. Suppose that $z$ lies in the $R_0$-neighborhood of a geodesic segment $[x,y]\subset Y$ and choose $z_0\in[x,y]$ with $\msf{d}_Y(z,z_0)\leqslant R_0$.  Since $F$ is $\theta$-Morse, there exists \[u \msf K\in \Diam_\theta(F(x),F(y))\] such that the Riemannian distance between $u \msf K$ and $F(z_0)$ is at most the Morse constant $M\geqslant 0$. Take $R_2 = M+R_1$, and consider a constant $R_3\geqslant 0$ such that if $g \msf K,h \msf K\in \msf G / \msf K$ satisfy $\msf{d}_{\msf G / \msf K}(g \msf K,h \msf K)\leqslant R_2$, then $\norm{\mu(g) - \mu(h)}\leqslant R_3$ (see for example \cite[Lemma~4.6]{benoist1997proprietes} or \cite[Lemma~2.1]{dey2024ahlfors}). Then \[D_\psi(F(x),u \msf K) - D_\psi(F(x),F(z)) \leqslant CR_3~,\] and similarly,  \[D_\psi(u \msf K,F(y)) - D_\psi(F(z),F(y)) \leqslant CR_3~.\]
    Since the Cartan projection is additive in a Weyl chamber (see for example \cite[Lemma~4.10]{dey2024ahlfors}), we get \[D_\psi(F(x),u \msf K)+ D_\psi(u \msf K,F(y)) = D_\psi(F(x),F(y))~.\] Therefore \[\abs{\msf{d}_\psi(x,z)+\msf{d}_\psi(z,y) - \msf{d}_\psi(x,y)} \leqslant 2CR_3~.\]

    Up to this point, \Cref{lemma: criterion for hyperbolicity} implies that $\ov{\msf{d}}_\psi$ is Gromov hyperbolic and roughly geodesic for some constant $C$. The fact that $(Y,\ov{\msf{d}}_\psi)$ is $\Gamma$-equivariantly quasi-isometric to a Groves--Manning cusped space follows from \cite[Theorem~1.7, Proposition~1.13]{zhu2022relatively}.
\end{proof}

\subsection{Compatibility of Busemann functions}\label{subsection:compatibility Busemann}

We follow the assumptions from the previous section. Recall the $\fraka_\theta$-valued Busemann function $\beta^\theta$ and Gromov product $G^\theta$ from \Cref{section: Lie groups}. We show that, when restricted to $Y$, their evaluation by $\psi$ are precisely the Busemann function and Gromov product of $\ov{\msf{d}}_\psi$ in the classical sense of Gromov hyperbolicity. We rely on the following lemma, which is a direct adaptation of \cite[Lemma~6.6]{quint2002mesures} to our setting.

\begin{lemma}\label{lemma: compatibility of Busemann functions}
    Suppose $\xi \in\Fc_\theta$ and $\xi_0\in \Fc$ is a lift of $\xi$. Consider a sequence $(g_m)_{m\in \Nb}\subset \msf G$ such that
    \begin{itemize}
        \item $\alpha(\mu(g_m)) \to +\infty$ as $m \to \infty$ for all $\al\in \theta$.
        \item $(g_m \msf K)_{m\in \Nb}$ converges to $\xi$ in the $\theta$-flag compactification of $\msf G/\msf K$.
    \end{itemize} Then for every $r\in \msf G$ we have \[\lim_{m\to \infty} (\mu_\theta(rg_m)-\mu_\theta(g_m)) = p_\theta(\sigma(r,\xi_0))~.\] In particular, for any $g,h\in \msf G$, we have \[\beta^\thet_\xi(g,h) = \lim_{m\to \infty} (\mu_\theta(g^{-1} g_m) - \mu_\theta(h^{-1} g_m))\] \[\text{and } \psi(\beta^\thet_\xi(g,h)) =  \lim_{m\to \infty} (D_\psi(g \msf K, g_m \msf K) - D_\psi(h \msf K, g_m \msf K))~.\]
\end{lemma}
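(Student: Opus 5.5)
The plan is to follow Quint's argument for \cite[Lemma~6.6]{quint2002mesures}, via the Iwasawa decomposition. First I deduce the two ``in particular'' statements from the main limit formula.

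\textbf{Reduction to the main formula.} Put $r=g^{-1}$ and $r=h^{-1}$ and subtract:
\[\lim_m(\mu_\theta(g^{-1}g_m)-\mu_\theta(h^{-1}g_m)) = p_\theta(\sigma(g^{-1},\xi_0)-\sigma(h^{-1},\xi_0))=\beta^\theta_\xi(g,h).\]
This is exactly the definition of $\beta^\theta$ in \Cref{section: Lie groups}. Applying the linear form $\psi$ and using $D_\psi(g\msf K,g_m\msf K)=\psi(\mu_\theta(g^{-1}g_m))$ gives the last identity. So it suffices to prove the first limit.

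\textbf{Main formula.} Write $g_m=k_m\exp(H_m)\ell_m$ in the Cartan decomposition, with $H_m=\mu(g_m)$. Convergence of $g_m\msf K$ to $\xi$ in the $\theta$-flag compactification means $k_m\msf P_\theta\to\xi$. Since $\alpha(H_m)\to\infty$ for $\alpha\in\theta$, this is insensitive to the choice of $k_m$ modulo $\msf K\cap\msf P_\theta$. After passing to subsequences I may therefore assume $k_m\to k$ with $k\msf P_\theta=\xi$. I may also choose the lift $\xi_0=k\msf P$, since the right-hand side depends only on $\xi$ by \cite[Lemma~6.1]{quint2002mesures}.

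Next use the Iwasawa decomposition $rk_m=\kappa_m\exp(a_m)n_m$, so that $a_m=\sigma(r,k_m\msf P)$. Then
\[rg_m=\kappa_m\exp(a_m)n_m\exp(H_m)\ell_m=\kappa_m\exp(a_m)\exp(H_m)\bigl(\exp(-H_m)n_m\exp(H_m)\bigr)\ell_m.\]
Conjugation by $\exp(-H_m)$ contracts $\msf N$. It does so uniformly only in the root directions that are not in the Levi of $\msf P_\theta$, but those are exactly the directions that matter after applying $p_\theta$.

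\textbf{Main obstacle.} The key step is to show $\norm{\mu_\theta(rg_m)-p_\theta(a_m+H_m)}\to0$, and this is where the $\theta$-regularity is really used. I would handle it by writing $\msf P_\theta=\msf L_\theta\msf N_\theta$. The conjugates $\exp(-H_m)n_m\exp(H_m)$ decompose as a part in $\msf N_\theta$, which tends to $e$ because $\alpha(H_m)\to\infty$ for $\alpha\in\theta$ and $n_m$ stays bounded, and a part in $\msf N\cap\msf L_\theta$, which is possibly unbounded. For the second part I would use that $\mu_\theta$ (projected by $p_\theta$) is unchanged under multiplication by elements of $\msf L_\theta$ whose $\fraka$-component is in $\ker p_\theta$. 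Concretely, I would apply the standard fact that $p_\theta\circ\mu$ on $\exp(H)\msf L_\theta^{\circ}$-type products equals $p_\theta(H)+$ the $\fraka_\theta$-part of the Levi Cartan projection. This uses $\msf L_\theta=\msf M_\theta\msf A_\theta$, where $\msf M_\theta$ has Cartan projection in $\ker p_\theta$.

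A cleaner route, if it is available, is to use representations: take the fundamental representations $(\rho_\alpha,V_\alpha)$ for $\alpha\in\theta$. Then the fundamental weights $\omega_\alpha$ span $\fraka_\theta^\ast$, and $\omega_\alpha(\mu(g))=\log\norm{\rho_\alpha(g)}$. Likewise $\omega_\alpha(\sigma(r,\xi_0))=\log\norm{\rho_\alpha(r)v}/\norm{v}$ for $v$ spanning the highest-weight line of $\xi_0$. Since $\alpha(H_m)\to\infty$, the matrix $\rho_\alpha(g_m)/\norm{\rho_\alpha(g_m)}$ converges to a rank-one map with image the line $k\,\ell_\alpha$ representing $\xi$. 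Hence $\norm{\rho_\alpha(rg_m)}/\norm{\rho_\alpha(g_m)}\to\norm{\rho_\alpha(r)v}/\norm{v}$. Taking logarithms and letting $\alpha$ range over $\theta$ gives the limit without subsequences, and this is the version I would write.
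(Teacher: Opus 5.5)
Your proof is correct and is essentially the paper's route. The paper gives no argument of its own; it only calls the lemma a direct adaptation of Quint's Lemma~6.6. The standard proof of that lemma is the computation you end up writing: Tits' proximal representations $\rho_\alpha$ for $\alpha\in\theta$, the identities $\chi_\alpha(\mu(g))=\log\norm{\rho_\alpha(g)}$ and $\chi_\alpha(\sigma(r,\xi_0))=\log(\norm{\rho_\alpha(r)v}/\norm{v})$, and rank-one convergence of $\rho_\alpha(g_m)/\norm{\rho_\alpha(g_m)}$ onto the line determined by $\xi$. Your reduction of the ``in particular'' identities, taking $r=g^{-1},h^{-1}$ and subtracting, also matches.

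Two small corrections, neither affecting the argument. Tits' representations only give highest weight a positive integer multiple $\chi_\alpha=n_\alpha\omega_\alpha$, not $\omega_\alpha$ itself; this is harmless since these still span $\fraka_\theta^\ast$. In your discarded Iwasawa sketch, the $\msf N\cap\msf L_\theta$-component of $\exp(-H_m)n_m\exp(H_m)$ is in fact bounded, because $H_m\in\fraka^+$.
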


\begin{remark}
    The lemma is further generalized in the GPS setting (see \cite[Proposition~3.4, Lemma~4.2]{blayac2024patterson}).
\end{remark}

Recall that for any $\psi\in \fraka_\theta^\ast$ that is symmetric and tangent to the $\theta$-growth indicator of $\Gamma$, Dey--Kim--Oh \cite{dey2024ahlfors} define the premetric on $\Lambda_\theta$ by \[\vis{d}_\psi(\xi,\eta)= \begin{cases}e^{-\psi(G^\theta(\xi,\eta))} & \text{if } \xi\neq \eta~,\\ 0 & \text{if } \xi=\eta~.\end{cases}\] 

The Busemann function associated to $\ov{\msf{d}}_\psi$ in the classical hyperbolic metric sense is defined as \[\beta^Y_\xi(x,y) = \sup_{(x_m)_m}\limsup_{m\to \infty} (\ov{\msf{d}}_\psi(x,x_m)-\ov{\msf{d}}_\psi(y,x_m)) = \sup_{(x_m)_m}\limsup_{m\to \infty} (\msf{d}_\psi(x,x_m)-\msf{d}_\psi(y,x_m)) \] for any $x,y \in Y$  and $\xi \in\Lambda_\theta \cong\partial Y$, where the suprema are taken over all sequences $(x_m)_{m\in \Nb}\subset Y$ that converge to $\xi$. Then \Cref{lemma: compatibility of Busemann functions} implies that for any $x,y \in Y$ and $\xi \in\Lambda_\theta \cong\partial Y$, \[\psi (\beta^\theta_\xi(g,h)) = \beta^Y_\xi(x,y)~,\]
where $g,h\in \msf G$ are such that $g\msf K = F(x)$ and $h \msf K = F(y)$.
It follows that $\psi(G^\theta(\xi,\eta))$ is identified with the Gromov product of $\ov{\msf{d}}_\psi$ in the hyperbolic metric sense and hence $\vis{d}_\psi$ is a visual quasi-metric on $\Lambda_\theta \cong \partial Y$ induced by the hyperbolic metric $\overline{\msf{d}}_\psi$ on $Y$. From this fact together with \Cref{theoremalpha: exact dimensionality}, \Cref{corollaryalpha: premetric exact dimensionality} follows.

\section{Regularity of Manhattan manifolds}\label{section: manhattan manifolds}

In this section we prove \Cref{theorem: manhattan regularity for relatively Anosov representations}, $C^1$-regularity of Manhattan manifolds associated to proper linear forms of relatively Anosov groups. We prove this result in the more general framework of \Cref{theorem: manhattan framework}, which is based in the work of Cantrell--Tanaka \cite{cantrell-tanaka.manhattan}.  
Inspired by the work of Cantrell, Sert and the first-named author \cite{CRS} on hyperbolic groups, we then prove \Cref{corollaryalpha: strict concavity of growth indicator}, strict concavity of the growth indicator for relatively Anosov groups.

\subsection{A framework theorem for regularity}\label{section: a framework theorem for regularity}

In this section we discuss the regularity of Manhattan manifolds. The argument follows the mechanism used by Cantrell--Tanaka \cite[Section~3 and Theorem~1.1]{cantrell-tanaka.manhattan} for hyperbolic groups.

We begin with a general countable group $\Gamma$ with identity element $e$ and left-invariant functions $\msf d_0,\msf d_1,\dots, \msf d_m: \Gamma \times \Gamma \ra \R$. We suppose that these functions are proper in the sense that for any $C>0$ and $j=0,\dots,m$, there are only finitely many $\gamma\in \Gamma$ such that $\msf d_j(e,\gamma)\leqslant C$. We further assume that these functions have finite and positive exponential growth, meaning that for $j=0,\dots,m$ the limit
\[\del_j:=\limsup_{T\to \infty}\frac{1}{T}\log \#\{\gamma\in \Gamma: \msf d_{j}(e,\gamma)\leqslant T\}\]
is positive and finite. Finally, we assume that  $\msf d_0,\msf d_1,\dots, \msf d_m$ are all quasi-isometric via the identity map. That is, there exist constants $A,B\geqslant 1$ such that for all $i,j\in 0,\dots,m$ and $\gam_1,\gam_2\in \Gamma$ we have
\[A^{-1}\msf d_i(\gam_1,\gam_2)-B\leqslant \msf d_j(\gam_1,\gam_2)\leqslant A \msf d_i(\gam_1,\gam_2)+B.\]

\begin{definition}
Let $\Theta:\Rb^m \ra \R$ be the function such that for $\vv=(v_1,\dots,v_m)\in \Rb^m$, $\Theta(\vv)$ is the critical exponent of 
\[s \mapsto \sum_{\gamma\in \Gamma}e^{-\sum_{i=1}^m{v_i \msf d_i(e,\gamma)}-s \msf d_0(e,\gamma)}~.\]
The graph of $\Theta$ is the \emph{Manhattan manifold} of $(\msf d_0,\dots,\msf d_m)$ and we say that $\Theta$ is the \emph{Manhattan manifold parametrization}.
\end{definition}

Our assumptions on $\msf d_0,\dots,\msf d_m$ imply that $\Theta$ is well-defined, and the H\"older inequality implies that it is continuous and convex. Given $\vv\in \R^m$ we define
\[\msf d_{\vv}:=\sum_{i=1}^m{v_i \msf d_i}+\Theta(\vv) \msf d_{0}.\]

The next result establishes a criterion for differentiability of $\Theta$.

\begin{theorem}\label{theorem: manhattan framework}
    Let $\Gamma$ and $\msf d_0,\dots,\msf d_m$ be as above, and let $\Theta$ be the parametrization of the corresponding Manhattan manifold of $(\msf d_0,\dots, \msf d_m)$. Suppose $M$ is a measurable space on which $\Gamma$ acts by measurable isomorphisms. In addition, for each $\vv\in \R^m$ assume that the following data exists.
    \begin{enumerate}
        \item A probability measure $\nu_{\vv}$ on $M$.
        \item For each $\gamma \in \Gamma$, a measurable set $O_\vv(\gamma)\subset M$, and a constant $C_{\vv}\geqslant 1$ such that \[C_{\vv}^{-1}e^{-\msf d_{\vv}(e,\gamma)}\leqslant \nu_{\vv}(O_\vv(\gamma))\leqslant C_{\vv} e^{-\msf d_{\vv}(e,\gamma)} \quad \text{ for all } \gamma\in \Gamma~.\]
        \item A measurable (discrete) dynamical system $(\Fc_\vv,\frakm_\vv,\Phi_\vv)$, where $\frakm_\vv$ is a $\Phi_\vv$-invariant probability measure on $\Fc_\vv$.
        \item A measurable map $\pi_\vv:\Fc_\vv \to M$ such that the image of an $\frakm_{\vv}$-full measure subset of $\Fc_\vv$ is a $\nu_{\vv}$-full measure subset of $M$.
        \item A measurable cocycle \[\msf c_\vv:\Nb\times \Fc_\vv\to \Gamma\] defined on a $\Phi_\vv$-invariant, $\frakm_\vv$-full measure subset of $\Fc_\vv$, satisfying \[\msf c_{\vv,m+n}(v)=\msf c_{\vv,n}(v)\ \msf c_{\vv,m}(\Phi_\vv^n(v))\quad \text{ for } m,n\in \Nb \text{ and }\frakm_\vv\text{-almost all }v\in \Fc_\vv~.\]
        \item For $\frakm_{\vv}$-almost every $v\in \Fc_\vv$ and $n\in \N$ large enough (depending on $v$), we suppose \[\pi_\vv(v)\in O_\vv(\msf c_{\vv,n}(v))~.\]
        \item A constant $\tau_{\vv}^j$ for each $j=1,\dots, m$, such that for $m_{\vv}$-almost every $v\in \Fc_\vv$ we have \[\lim_{n\to \infty} \frac{\msf d_j(e,\msf c_{\vv,n}(v))}{\msf d_0(e,\msf c_{\vv,n}(v))} = \tau_{\vv}^j~.\]
    \end{enumerate}
    If the maps $\vv \mapsto \tau_{\vv}^j$ are continuous on $\R^m$ for $j=1,\dots,m$, then $\Theta$ is continuously differentiable and \[\frac{\partial \Theta}{\partial v_j}(\vv)=-\tau_{\vv}^j \quad \text{ for all } \vv\in \R^m \text{ and }j=1,\dots,m.\]
\end{theorem}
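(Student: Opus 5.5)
The plan follows Cantrell--Tanaka: I will show that $-\tau_\vv$ is a subgradient of the convex function $\Theta$ at every $\vv$. Continuity of $\vv\mapsto\tau_\vv$ then forces differentiability.

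\emph{Step 1 (a one-sided inequality at each $\vv$).} Fix $\vv$ and $\vw\in\R^m$. I claim that
\[\Theta(\vw)\geqslant \Theta(\vv)-\sum_{j=1}^m (w_j-v_j)\,\tau^j_\vv,\]
i.e.\ $-\tau_\vv$ is a subgradient of $\Theta$ at $\vv$. To prove it, fix $\epsilon>0$ and consider the set
\[\Gamma_\epsilon=\Big\{\gamma:\ \big|\msf d_j(e,\gamma)-\tau^j_\vv\,\msf d_0(e,\gamma)\big|\leqslant \epsilon\,\msf d_0(e,\gamma)\ \text{for all } j\Big\}.\]
By conditions (4)--(7) and Egorov's theorem, there exist $N$ and a set $E\subset\Fc_\vv$ with $\frakm_\vv(E)>1/2$ such that for every $v\in E$ and $n\geqslant N$ we have $\msf c_{\vv,n}(v)\in\Gamma_\epsilon$ and $\pi_\vv(v)\in O_\vv(\msf c_{\vv,n}(v))$. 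Since each $\msf d_j$ is proper and the $\msf d_j$ are quasi-isometric, $\msf d_0(e,\msf c_{\vv,n}(v))\to\infty$. Hence for every large $T$ the set $\pi_\vv(E)$, which has $\nu_\vv$-measure bounded below, is covered by the shadows $O_\vv(\gamma)$ with $\gamma\in\Gamma_\epsilon$ and $\msf d_0(e,\gamma)\geqslant T$.

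By the shadow estimate (2), this gives
\[\sum_{\gamma\in\Gamma_\epsilon,\ \msf d_0(e,\gamma)\geqslant T} e^{-\msf d_\vv(e,\gamma)}\geqslant c>0\quad\text{for all } T.\]
I will need a measurable-cover argument here, since $\pi_\vv$ only preserves full measure; I will replace $E$ by a set whose image has positive $\nu_\vv$-mass, which the full-measure hypothesis provides after intersecting. Consequently the series $\sum_{\gamma\in\Gamma_\epsilon}e^{-\msf d_\vv(e,\gamma)}$ diverges.

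On $\Gamma_\epsilon$ we have
\[\msf d_\vw(e,\gamma)-\msf d_\vv(e,\gamma)\leqslant \Big(\sum_j (w_j-v_j)\tau_\vv^j+\Theta(\vw)-\Theta(\vv)+\epsilon\,|\vw-\vv|_1\Big)\,\msf d_0(e,\gamma).\]
Since $\sum_\gamma e^{-\msf d_\vw(e,\gamma)-\eta\,\msf d_0(e,\gamma)}<\infty$ for every $\eta>0$ (by the definition of $\Theta(\vw)$ as the critical exponent), divergence of the $\msf d_\vv$-series on $\Gamma_\epsilon$ forces the bracket to be $\geqslant 0$. Letting $\epsilon\to 0$ yields the claim.

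\emph{Step 2 (differentiability).} By Step 1, $-\tau_\vv\in\partial\Theta(\vv)$ for every $\vv$. A convex function $\Theta$ that admits a continuous selection of subgradients is $C^1$. To see this, fix $\vv$ and take any $p\in\partial\Theta(\vv)$. For $\vw=\vv+t\mathbf e$ with $t>0$, monotonicity of subdifferentials gives
\[\langle -\tau_{\vw}-p,\ \vw-\vv\rangle\geqslant 0,\quad\text{so}\quad \langle -\tau_{\vv+t\mathbf e},\mathbf e\rangle\geqslant\langle p,\mathbf e\rangle.\]
Letting $t\to 0$ and using continuity gives $\langle -\tau_\vv,\mathbf e\rangle\geqslant\langle p,\mathbf e\rangle$ for every direction $\mathbf e$. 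Hence $p=-\tau_\vv$, so the subdifferential is a singleton and $\Theta$ is differentiable with $\nabla\Theta=-\tau_\vv$, which is continuous.

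\emph{Main obstacle.} The delicate step is the divergence of the restricted Poincaré series in Step 1. It needs uniform control: shadows of elements with large $\msf d_0$ that lie in $\Gamma_\epsilon$ must cover a set of definite measure for every $T$, which is what the Egorov plus shadow argument supplies. Note that the shadow bound (2) is an upper bound on $\nu_\vv(O_\vv(\gamma))$, so the covering directly gives
\[c\leqslant \sum \nu_\vv(O_\vv(\gamma))\leqslant C_\vv \sum e^{-\msf d_\vv(e,\gamma)}.\]
Uniformity in $T$ holds because the same set $E$ works for all $n\geqslant N$, and $\msf d_0(e,\msf c_{\vv,n}(v))\to\infty$ uniformly on $E$ after a further application of Egorov.
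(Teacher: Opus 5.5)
Your architecture matches the paper's. Step~1 is a multi-directional version of the paper's slope inequality $\Theta(\vv+h\mathbf{e}_j)\geqslant\Theta(\vv)-h\tau^j_\vv$. Controlling all the ratios $\msf d_j/\msf d_0$ at once is allowed by (7), and your comparison with the convergent series at exponent $\Theta(\vw)+\eta$ is correct. Step~2 is the same sandwich the paper obtains by applying that inequality at both $\vv$ and $\vv+h\mathbf{e}_j$, repackaged as monotonicity of the subdifferential, and it is fine.

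The weak point is the divergence of the restricted series. You take an Egorov set $E$ with $\frakm_\vv(E)>1/2$ and assert that $\pi_\vv(E)$ has $\nu_\vv$-measure bounded below. Hypothesis (4) gives nothing of the kind: it only controls images of \emph{full}-measure sets, $\pi_\vv$ need not push $\frakm_\vv$ to anything absolutely continuous with respect to $\nu_\vv$, and $\pi_\vv(E)$ need not even be measurable. ``Intersecting'' with a full-measure set does not produce positive mass either. Your route can be repaired by taking increasing Egorov sets $E_k$ whose union has full measure, so that some $\pi_\vv(E_k)$ has positive outer measure. As written, however, this step is unjustified.

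More to the point, no uniformity is needed, and this is how the paper argues. Fix $T$ and let $U_T$ be the union of the $O_\vv(\gamma)$ over $\gamma\in\Gamma_\epsilon$ with $\msf d_0(e,\gamma)\geqslant T$; it is measurable since $\Gamma$ is countable. For every $v$ in the full-measure set where (5)--(7) hold, \emph{some} $n$ (depending on $v$) gives $\pi_\vv(v)\in O_\vv(\msf c_{\vv,n}(v))\subset U_T$. So the full-measure image lies in $U_T$, whence $\nu_\vv(U_T)=1$, and by the upper shadow bound the sum of $e^{-\msf d_\vv(e,\gamma)}$ over that index set is at least $C_\vv^{-1}$ for every $T$. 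The paper phrases this as $\nu_\vv(S_\vv(A))=1$ for the limsup set $S_\vv(A)=\bigcap_N\bigcup_{\gamma\in A_N}O_\vv(\gamma)$, combined with the convergence half of Borel--Cantelli. The ``main obstacle'' you describe therefore disappears.

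Separately, your claim that $\msf d_0(e,\msf c_{\vv,n}(v))\to\infty$ follows from properness and quasi-isometry is a non sequitur. Properness does not force the cocycle to leave finite sets; the trivial cocycle satisfies (5). This escape is a tacit standing assumption of the framework, and the paper uses it too when placing $\pi_\vv(v)$ in $S_\vv(A_\vv(\epsilon))$. In the application it holds because $|\psi_0(\mu_\theta(\msf c_n(v)))-T_n(v)|\leqslant B_0$ with $T_n(v)\geqslant n$.
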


We assume the conditions (1)-(7) from the statement above hold and prove the following lemmas. For a fixed $\vv\in \R^m$ and a subset $A\subset \Gamma$, we set \[S_\vv(A) = \{x\in M : x\in O_\vv(\gamma)\text{ for infinitely many }\gamma\in A\}\subset M~.\]

\begin{lemma}\label{lemma: positive measure accumulation implies divergence}
    Suppose $\vv\in \Rb^m$ and $A\subset \Gamma$ satisfy $\nu_{\vv}(S_\vv(A))>0$. Then $\sum_{\gamma\in A}e^{-\msf d_{\vv}(e,\gamma)}= +\infty$.
\end{lemma}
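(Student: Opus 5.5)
This is the easy half of the Borel--Cantelli lemma, proved by contraposition. Suppose instead that $\sum_{\gamma\in A}e^{-\msf d_{\vv}(e,\gamma)}<+\infty$. We will show that $\nu_\vv(S_\vv(A))=0$, which contradicts the hypothesis.

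First, $\Gamma$ is countable, so $A$ is countable and we can enumerate it as $A=\{\gamma_1,\gamma_2,\dots\}$. If $A$ is finite, then $S_\vv(A)=\emptyset$, because a point cannot lie in infinitely many sets indexed by a finite family; in that case there is nothing to prove. So assume $A$ is infinite. By definition,
\[S_\vv(A)=\bigcap_{N\geqslant 1}\ \bigcup_{k\geqslant N} O_\vv(\gamma_k)~,\]
which is the limsup set of the events $O_\vv(\gamma_k)$. It is measurable because each $O_\vv(\gamma)$ is measurable by condition (2) of \Cref{theorem: manhattan framework}. For every $N$ we have
\[\nu_\vv(S_\vv(A))\leqslant \sum_{k\geqslant N}\nu_\vv(O_\vv(\gamma_k))~.\]

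Next we use the upper bound in condition (2), namely $\nu_\vv(O_\vv(\gamma))\leqslant C_\vv e^{-\msf d_\vv(e,\gamma)}$. This gives
\[\nu_\vv(S_\vv(A))\leqslant C_\vv\sum_{k\geqslant N} e^{-\msf d_\vv(e,\gamma_k)}~.\]
The right-hand side is $C_\vv$ times the tail of a convergent series, so it tends to $0$ as $N\to\infty$. Hence $\nu_\vv(S_\vv(A))=0$, and the lemma follows.

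There is no real obstacle here. The only thing to check is that the sum in the statement is over distinct group elements, so that each event $O_\vv(\gamma)$ is counted once. This holds by construction, since the sum runs over $\gamma\in A$. Note that only the upper half of the shadow estimate in condition (2) is used.
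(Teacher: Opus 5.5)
Your proposal is correct and follows essentially the same route as the paper: a first Borel--Cantelli bound on the limsup set $S_\vv(A)$, using only the upper shadow estimate from condition (2). The one difference is bookkeeping. The paper first disposes of the case where infinitely many $\gamma\in A$ have bounded $\msf d_\vv(e,\gamma)$, then takes tails over $A_N=\{\gamma\in A:\msf d_\vv(e,\gamma)\geqslant N\}$; your injective enumeration of the countable set $A$ makes that case split unnecessary.
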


\begin{proof}
    For every $N\in \Nb$, we consider the set $A_N=\{\gamma\in A : \msf d_{\vv}(e,\gamma)\geqslant N\}$. If there exists $N_0\in \Nb$ such that $A\setminus A_{N_0} = \{\gamma\in A : \msf d_{\vv}(e,\gamma)< N_0\}$ is infinite, then \[\sum_{\gamma\in A}e^{-\msf d_{\vv}(e,\gamma)} \geqslant \sum_{\gamma\in A\setminus A_{N_0}}e^{-N_0} = +\infty~,\] which completes the proof. Therefore, we may assume that $A \setminus A_N$ is finite for each $N\in \Nb$. 
    Then we have the characterization \[S_\vv(A) =\bigcap_{N\in \Nb}\bigcup_{\gamma\in A_N}O_\vv(\gamma)~.\]
    Applying the shadow estimate from condition (2), for all $N\in \Nb$ we obtain 
    \begin{align*}
        \nu_{\vv}(S_\vv(A)) \leqslant \sum_{\gamma\in A_N}\nu_{\vv}(O_\vv(\gamma))
        \leqslant C_{\vv}\sum_{\gamma\in A_N}e^{-\msf d_{\vv}(e,\gamma)}.
    \end{align*}
    From this it follows that if $\sum_{\gamma\in A}e^{-\msf d_{\vv}(e,\gamma)}<\infty$, then \[\nu_{\vv}(S_\vv(A))\leqslant \limsup_{N\to \infty}{\sum_{\gamma\in A_N}e^{-\msf d_{\vv}(e,\gamma)}}=0~,\] contradicting our assumption.
\end{proof}

Next, we fix an index $j$ from $1,2,\dots, m$.
Given $\vv\in \Rb^m$ and $\epsilon>0$, we consider the set \[A_{\vv}(\epsilon)=\left\{\gamma\in \Gamma : \tau_{\vv}^j-\ep \leqslant \frac{\msf d_j(e,\gamma)}{\msf d_0(e,\gamma)}\leqslant \tau_{\vv}^j+\epsilon\right\}~.\] 

\begin{lemma}\label{lemma: manhattan generic divergence}
    For every $\vv\in \Rb^m$ and $\epsilon>0$, we have $\sum_{\gamma\in A_{\vv}(\epsilon)} e^{-\msf d_{\vv}(e,\gamma)}= +\infty$.
\end{lemma}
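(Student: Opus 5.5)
The plan is to apply \Cref{lemma: positive measure accumulation implies divergence} with $A=A_\vv(\epsilon)$. It then suffices to show that $\nu_\vv(S_\vv(A_\vv(\epsilon)))>0$. In fact we will show this set has full $\nu_\vv$-measure.

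First, use conditions (5)--(7) of \Cref{theorem: manhattan framework} to choose an $\frakm_\vv$-full measure set $E\subset\Fc_\vv$ on which three things hold. The cocycle $\msf c_{\vv,n}(v)$ is defined for all $n$. We have $\pi_\vv(v)\in O_\vv(\msf c_{\vv,n}(v))$ for all $n\geqslant n_0(v)$. Finally, $\msf d_j(e,\msf c_{\vv,n}(v))/\msf d_0(e,\msf c_{\vv,n}(v))\to\tau^j_\vv$. The last convergence gives an $n_1(v)$ with $\msf c_{\vv,n}(v)\in A_\vv(\epsilon)$ for all $n\geqslant n_1(v)$. Consequently $\pi_\vv(v)\in O_\vv(\msf c_{\vv,n}(v))$ with $\msf c_{\vv,n}(v)\in A_\vv(\epsilon)$ for every $n\geqslant\max(n_0,n_1)$.

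Second, and this is the key point, we must check that infinitely many \emph{distinct} group elements arise among the $\msf c_{\vv,n}(v)$, since $S_\vv$ counts distinct $\gamma$. The ratio in (7) only makes sense, and converges, when $\msf d_0(e,\msf c_{\vv,n}(v))\neq 0$. Moreover, if the sequence $\msf c_{\vv,n}(v)$ took only finitely many values, (7) would force the ratio to be eventually constant along each value, which is not by itself a contradiction. So more care is needed. I would argue as follows. Suppose the set $\{\msf c_{\vv,n}(v)\}_n$ is finite for $v$ in a set of positive measure. Then $\pi_\vv(v)$ would lie in $O_\vv(\gamma)$ for a fixed $\gamma$ along infinitely many $n$, but not in shadows of infinitely many distinct elements. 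In the intended applications $\msf d_0(e,\msf c_{\vv,n}(v))\to\infty$, for instance by the quasi-isometry with return times and Kac's lemma, as in the proof of \Cref{theoremalpha: exact dimensionality}. Properness then gives infinitely many distinct elements.

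If this divergence is not available from (1)--(7) alone, I would handle it through the measure instead. Fix $v$ whose sequence $\msf c_{\vv,n}(v)$ is bounded. Then $\pi_\vv(v)$ belongs to $\bigcap_{n\ge n_0}O_\vv(\msf c_{\vv,n}(v))$, which is contained in the finite union $\bigcup_{\gamma\in F}O_\vv(\gamma)$ for a finite set $F$ of elements. I would then try to show that such points carry no mass, using the shadow estimates together with cocycle recurrence. I expect that justifying divergence of $\msf d_0(e,\msf c_{\vv,n}(v))$ is the main subtle step. The most natural route is the observation above: bounded cocycle orbits are incompatible with convergence to a limit in (7) only if the ratios vary. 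Absent that, I would add the divergence as implicit in the setup, since it holds in the relatively Anosov application.

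Granting distinctness, we get $\pi_\vv(E)\subset S_\vv(A_\vv(\epsilon))$. By condition (4), $\pi_\vv(E)$ contains a set of full $\nu_\vv$-measure, so $\nu_\vv(S_\vv(A_\vv(\epsilon)))=1>0$. \Cref{lemma: positive measure accumulation implies divergence} then yields $\sum_{\gamma\in A_\vv(\epsilon)}e^{-\msf d_\vv(e,\gamma)}=+\infty$.
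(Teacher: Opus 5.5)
Your proof is the paper's proof. On a full-measure set where (4)--(7) hold, $\msf c_{\vv,n}(v)\in A_\vv(\epsilon)$ and $\pi_\vv(v)\in O_\vv(\msf c_{\vv,n}(v))$ for all large $n$, so $\nu_\vv(S_\vv(A_\vv(\epsilon)))=1$ and \Cref{lemma: positive measure accumulation implies divergence} applies.

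The distinctness issue you flag is a genuine step that the paper's proof skips; it passes directly to $\pi_\vv(v)\in S_\vv(A_\vv(\epsilon))$. Your resolution is the right one. In the relatively Anosov application, \Cref{lem:almostadditive} gives $\abs{\p_0(\mu_\theta(\msf c_n(v)))-T_n(v)}\leqslant B_0$ with $T_n(v)\geqslant n$, so $\msf d_0(e,\msf c_{\vv,n}(v))\to\infty$. More intrinsically, whenever $\msf d_0(e,e)=0$, condition (7) already forces $\msf c_{\vv,n}(v)\neq e$ for all large $n$. Then Poincar\'e recurrence for the skew product $(w,g)\mapsto(\Phi_\vv(w),\,g\,\msf c_{\vv,1}(w))$, on the forward-invariant finite-measure set of points whose orbits stay in $\Fc_\vv\times F$ for a finite $F\subset\Gamma$, shows that almost every such orbit returns to $e$ infinitely often. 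Hence bounded cocycle orbits form a null set.
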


\begin{proof}
    Let $E_{\vv}\subset \Fc_{\vv}$ be a $\frakm_{\vv}$-full measure subset on which all the assumptions of \Cref{theorem: manhattan framework} hold simultaneously. By condition (7), for $v\in E_{\vv}$ we have \[\lim_{n\to\infty} \frac{\msf d_j(e,\msf c_{\vv,n}(v))}{\msf d_0(e,\msf c_{\vv,n}(v))} = \tau_{\vv}^j~.\] Hence for every $v\in E_{\vv}$, the elements $\msf c_{\vv,n}(v)$ belong to $A_{\vv}(\epsilon)$ for all sufficiently large $n$.  Since $\pi_\vv(v)\in O_\vv(\msf c_{\vv,n}(v))$ for all $n$ large enough by condition (6), it follows that $\pi_\vv(v)\in S_\vv(A_{\vv}(\epsilon))$. But $\pi_\vv(E_{\vv})$ has full $\nu_{\vv}$-measure in $M$ by condition (4), implying $\nu_{\vv}(S_\vv(A_\vv(\ep)))=1$. The conclusion then follows from \Cref{lemma: positive measure accumulation implies divergence}.
\end{proof}

Our next result relates the drifts $\tau_{\vv}^j$ with $\Theta$. We let $\ve_j\in \Rb^m$ denote the vector whose $j$-th coordinate is 1 and the rest are all zero.

\begin{lemma}\label{lemma: manhattan slope inequalities}
    For every $\vv\in \Rb^m$ and $h\in \R$ we have $\Theta(\vv+h\ve_j)\geqslant \Theta(\vv)-h\tau_{\vv}^j$. Consequently, if $h>0$ then we have \[-\tau_{\vv}^j\leqslant \frac{\Theta(\vv+h\ve_j)-\Theta(\vv)}{h}\leqslant -\tau_{\vv+h\ve_j}^j~.\]
\end{lemma}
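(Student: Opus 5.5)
The plan is to derive the inequality from the divergence in \Cref{lemma: manhattan generic divergence}, restricted to the ``generic'' set $A_\vv(\epsilon)$. On that set the ratio $\msf d_j/\msf d_0$ is nearly constant, so the perturbation $h\msf d_j$ can be traded for a multiple of $\msf d_0$.

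Fix $\vv$, $h\in\R$ and $\epsilon>0$. By definition,
\[\msf d_{\vv+h\ve_j}(e,\gamma)-\msf d_\vv(e,\gamma)=h\,\msf d_j(e,\gamma)+(\Theta(\vv+h\ve_j)-\Theta(\vv))\,\msf d_0(e,\gamma).\]
For $\gamma\in A_\vv(\epsilon)$ we have $h\,\msf d_j(e,\gamma)\leqslant (h\tau^j_\vv+|h|\epsilon)\,\msf d_0(e,\gamma)$, assuming $\msf d_0(e,\gamma)\geqslant 0$. This holds for all but finitely many $\gamma$ by properness, and those finitely many terms do not affect divergence. Suppose, for contradiction, that $\Theta(\vv+h\ve_j)<\Theta(\vv)-h\tau_\vv^j$. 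Choose $\epsilon$ small enough that
\[\eta:=\Theta(\vv)-h\tau^j_\vv-|h|\epsilon-\Theta(\vv+h\ve_j)>0.\]
Then for $\gamma\in A_\vv(\epsilon)$ we get $-\msf d_{\vv}(e,\gamma)\leqslant -\sum_i (\vv+h\ve_j)_i\msf d_i(e,\gamma)-(\Theta(\vv)-h\tau^j_\vv-|h|\epsilon)\msf d_0(e,\gamma)$. Summing over $A_\vv(\epsilon)$, the series in \Cref{lemma: manhattan generic divergence} is bounded above by the Manhattan series for the parameter $\vv+h\ve_j$, evaluated at the exponent $s=\Theta(\vv+h\ve_j)+\eta$. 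That exponent is strictly above the critical exponent $\Theta(\vv+h\ve_j)$, so the series converges. This contradicts \Cref{lemma: manhattan generic divergence}. The sign conventions are routine, and the only care needed is to make the $\epsilon$-error absorbable into the gap $\eta$.

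For the consequence, take $h>0$. The first inequality $\Theta(\vv+h\ve_j)-\Theta(\vv)\geqslant -h\tau_\vv^j$ is immediate after dividing by $h$. For the second, apply the main inequality at the base point $\vv':=\vv+h\ve_j$ with increment $-h$. This gives $\Theta(\vv)\geqslant \Theta(\vv+h\ve_j)+h\tau^j_{\vv+h\ve_j}$, and rearranging and dividing by $h>0$ yields the upper bound $-\tau^j_{\vv+h\ve_j}$.

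The main point to be careful about is the convergence claim, which needs the comparison with a Poincaré-type series at an exponent strictly larger than the critical one. Here the sign of $\msf d_0$ matters: we need $\msf d_0(e,\gamma)\to\infty$, which holds by properness, so that adding $\eta\,\msf d_0$ to the exponent genuinely moves past the critical exponent. Everything else is bookkeeping.
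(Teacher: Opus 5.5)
Your proposal is correct and follows the paper's argument: on $A_\vv(\epsilon)$ you dominate the divergent series of \Cref{lemma: manhattan generic divergence} by the Manhattan series for $\vv+h\ve_j$, and you obtain the two-sided bound by swapping base points. The only differences are cosmetic. You argue by contradiction, with a gap $\eta$ and an $|h|\epsilon$ slack that covers both signs of $h$ at once, whereas the paper argues directly with $s_\epsilon=\Theta(\vv)-h(\tau^j_\vv+2\epsilon)$ for $h>0$ and lets $\epsilon\to 0$; you are also slightly more careful about the finitely many $\gamma$ with $\msf d_0(e,\gamma)\leqslant 0$.
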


\begin{proof}
    Suppose $\vv=(v_1,\dots,v_m)$. We first assume $h>0$ and show that for any $\epsilon>0$, \[s_\epsilon=\Theta(\vv)-h(\tau_{\vv}^j+2\epsilon) \leqslant \Theta(\vv+h\ve_j)~.\] If $\gamma\in A_{\vv}(\epsilon)$, then $\msf d_j(e,\gamma)\leqslant (\tau_{\vv}^j+\epsilon)\msf d_0(e,\gamma)$, and hence
    \begin{align*}
        &-\sum_{i=1}^m{v_i \msf d_i(e,\gamma)}-h \msf d_j(e,\gamma)-s_\epsilon  \msf d_0(e,\gamma)\\
        = & -\sum_{i=1}^m{v_i \msf d_i(e,\gamma)}-\Theta(\vv) \msf d_0(e,\gamma)+h((\tau_{\vv}^j+2\epsilon) \msf d_0(e,\gamma)- \msf d_j(e,\gamma)) \geqslant -\msf d_{\vv}(e,\gamma)~.
    \end{align*}
    Consequently, we have \[\sum_{\gamma\in A_{\vv}(\epsilon)}e^{-\sum_{i=1}^m{v_i \msf d_i(e,\gamma)}-h  \msf d_j(e,\gamma)-s_\epsilon  \msf d_0(e,\gamma)} \geqslant \sum_{\gamma\in A_{\vv}(\epsilon)}e^{-\msf d_{\vv}(e,\gamma)}~.\] 
    By \Cref{lemma: manhattan generic divergence}, the series on the right hand side diverges, so does the series on the left hand side, implying $s_\epsilon\leqslant \Theta(\vv+h\ve_j)$. Since $\epsilon>0$ is arbitrary, we get \[\Theta(\vv+h\ve_j)\geqslant \Theta(\vv)-h\tau_{\vv}^j~.\]
    The case $h<0$ is analogous.
\end{proof}

\begin{proof}[Proof of \Cref{theorem: manhattan framework}]
    The function $\Theta$ is continuous and convex, so it has left and right partial derivatives at every point and is differentiable almost everywhere. Then by \Cref{lemma: manhattan slope inequalities}, we deduce that $\Theta$ is differentiable with partial derivatives $\frac{\partial\Theta}{\partial v_j}(\vv)=-\tau_{\vv}^j$ for all $\vv$ and $j=1,\dots, m$. Then $\Theta$ is $C^1$-regular whenever $\tau_{\vv}^j$ are continuous on $\R^m$ for $j=1,\dots,m$.
\end{proof}

\subsection{$C^1$-regularity for relatively Anosov groups}

For the rest of the section we suppose that $\Gamma<\msf G$ is a $\theta$-Anosov group relative to a collection $\Pc$ of subgroups. Any linear form $\psi\in \fraka_\theta^\ast$ induces a left-invariant function $\msf{d}_\psi:\Gamma \times \Gamma\ra \R$ according to
\[\msf{d}_\psi(\gam_1,\gam_2)=\psi(\mu_\theta(\gam_1^{-1}\gam_2)) \quad \text{ for all }\gam_1,\gam_2\in \Gamma.\]
We fix $(\Gamma,\theta)$-proper linear forms $\psi_0,\psi_1,\dots,\p_m \in \fraka_\theta^\ast$ and let $\Theta:\R^m \ra \R$ be the parametrization of the Manhattan manifold associated to $(\msf d_{\p_0},\dots,\msf d_{\p_m})$. 

Given $\vv\in \R^m$, we let
\begin{equation}\label{eq:p_vv}\p_{\vv}:=\sum_{i=1}^m{v_i\p_i}+\Theta(\vv)\p_{0}\in \fraka_\theta^\ast
\end{equation}
and abbreviate $d_\vv=d_{\p_\vv}$. Then each $\p_\vv$ is $(\Gamma,\theta)$-proper and tangent to the $\theta$-growth indicator of $\Gamma$, i.e. it satisfies $\del_{\p_{\vv}}(\Gamma)=1$.

By verifying the assumptions in \Cref{theorem: manhattan framework}, we prove the following, which confirms \Cref{theoremalpha: regularity of Manhattan manifold} from the introduction.

\begin{theorem}\label{theorem: manhattan regularity for relatively Anosov representations}
    The map $\Theta:\Rb^m \to \R$ is $C^1$-regular.
\end{theorem}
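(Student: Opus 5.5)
We verify the hypotheses of \Cref{theorem: manhattan framework} with $\msf d_j=\msf d_{\psi_j}$, $M=\Lambda_\theta$ and $\nu_\vv=\nu_{\psi_\vv}$. The standing assumptions hold: each $\psi_j$ is $(\Gamma,\theta)$-proper, hence positive, so the $\msf d_j$ are mutually quasi-isometric by \Cref{lemma: comparison positive linear forms}, and they have finite positive growth. Since $\delta_{\psi_\vv}(\Gamma)=1$, \Cref{theorem: PS existence uniqueness ergodicity} gives $\nu_\vv$. We take $O_\vv(\gamma)=f_{\GM}(O^{\GM}_R(o,\gamma o))$ for $R$ large and independent of $\vv$; then \Cref{lemma: the comparison for relatively Anosov} and \Cref{lemma: the shadow lemma for relatively Anosov} give condition (2).

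For the dynamics we work on a single reference flow space rather than on $\Omega_{\psi_\vv}$, which changes with $\vv$. We fix $\Omega_0=\Omega_{\psi_0}$ and transport the current $\sfL_{\psi_\vv}$, built from $\nu_\vv$ and $\wh\nu_\vv$, to a $\Gamma$-invariant measure $\sfL_\vv\otimes\mathrm{Leb}$ on $\wt\Omega_{\psi_0}$; this works because the $\Gamma$-action on $\wt\Omega_{\psi_0}$ depends only on $\psi_0$. Call the induced measure $\frakm^0_\vv$. To see it is finite and ergodic, we compare it with $\frakm_{\psi_\vv}$ via the Kim--Oh maps of \Cref{theorem: the flow space for relatively Anosov}: both flow spaces are equivariant reparametrized quotients of $\Gc(X_{\GM})$, so $\frakm^0_\vv$ is a time change of $\frakm_{\psi_\vv}$ with roof function comparable to constants. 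Finiteness and ergodicity (in fact mixing) then follow from \Cref{theorem: BMS finite and strongly mixing}.

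Next we choose a compact $\wh K$ as in \Cref{lemma: good local section} around a point in the common support; all the currents are equivalent to $\nu\otimes\wh\nu$-type measures supported on $\Lambda_\theta^{(2)}$, so the supports agree. We take $\Fc_\vv=K$ with the $\phi^1$ first-return map $\Phi$, which is independent of $\vv$; the normalized restriction of $\frakm^0_\vv$; the cocycle $\msf c$, also independent of $\vv$; and $\pi(v)=\wh v^+$. Conditions (3)–(6) then follow exactly as in the proof of \Cref{theoremalpha: exact dimensionality}, using \Cref{lemma: mixing implies ergodic first return} for ergodicity.

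For condition (7), by \eqref{eq:T_n} we have $\msf d_0(e,\msf c_n(v))=T_n(v)+O(1)$. The functions $\msf d_j\circ\msf c_n$ are almost subadditive and integrable, being bounded by $A\,T_n+B$. Kingman's theorem together with Kac's lemma then gives
\[\tau^j_\vv=\frac{\lambda_j(\vv)}{\lambda_0(\vv)}, \quad \text{where} \quad \lambda_j(\vv)=\lim_{n\to\infty}\frac1n\int \msf d_j(e,\msf c_n)\,\de\wh\frakm^\Phi_\vv=\inf_n\frac1n\int \msf d_j(e,\msf c_n)\,\de\wh\frakm^\Phi_\vv.\]

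The main obstacle is showing that $\vv\mapsto\tau^j_\vv$ is continuous. We plan to proceed as follows. First, $\Theta$ is continuous, so $\psi_\vv\to\psi_{\vv_0}$; by uniqueness in \Cref{theorem: PS existence uniqueness ergodicity} and a uniform shadow-lemma constant, $\nu_\vv\to\nu_{\vv_0}$ weak$^*$, and hence $\frakm^0_\vv\to\frakm^0_{\vv_0}$ vaguely. Since $\frakm^0_{\vv_0}(\partial\wh K)=0$ and $\msf c_n$ is locally constant off a null set by \Cref{lemma: continuous almost everywhere}, the integrals $\int\msf d_j(e,\msf c_n)\,\de\frakm^0_\vv$ converge for each fixed $n$, provided the return times are uniformly integrable. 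This integrability is the delicate point, and we would get it from uniform-in-$\vv$ control of the cusp excursions via the coarse reparametrization. Convergence for fixed $n$ gives upper semicontinuity of the infima $\lambda_j$. For the reverse inequality we would apply the same argument to the superadditive sequences $-\msf d_j$, which are almost additive up to Morse-type constants, or alternatively use the monotonicity in \Cref{lemma: manhattan slope inequalities}: one-sided upper semicontinuity of $\tau$, combined with the slope inequalities and the convexity of $\Theta$, already forces continuity of the partial derivatives.
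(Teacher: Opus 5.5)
Your architecture matches the paper's: one reference space $\Omega_0=\Omega_{\psi_0}$ carrying the transported measures $\frakm^0_\vv$, a first return to a compact box $\wh K$ with null boundary, a return cocycle $\msf c$ independent of $\vv$, drifts obtained from Kingman and Kac, weak$^*$ continuity of $\nu_\vv$ by uniqueness, vague continuity of $\frakm^0_\vv$, and local constancy of $\msf c_n$ off a null set.

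Your finiteness argument via $\frakm_{\psi_\vv}$ differs from the paper's cusp decomposition. It can plausibly be made rigorous by comparing return times to a common section. However, the relation is only coarse, and mixing does not survive time changes. What you actually get is ergodicity of the flow, which is also what the paper proves. That suffices, because the pointwise limiting ratio in condition (7) is invariant under the whole flow.

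\textbf{The genuine gap.} Both your Kingman step for $j\geqslant 1$ and your continuity step rest on \Cref{lem:almostadditive}: the sequence $n\mapsto\psi_j(\mu_\theta(\msf c_n(v)))$ is additive along the cocycle up to a constant $B_j$ independent of $\vv$. You assert almost subadditivity for $j\geqslant 1$ without argument, and you get the reverse inequality from ``Morse-type constants''. Neither is available here:
\begin{itemize}
\item $\Gamma$ is only relatively Anosov, and this is not known to imply relatively Morse.
\item $\psi_j$ is an arbitrary proper form, possibly with negative coefficients on the fundamental weights, so $\psi_j\circ\mu_\theta$ need not even be subadditive on $\msf G$.
\end{itemize}
The paper proves almost additivity with \Cref{theorem: the flow space for relatively Anosov}~(5), applied to every $\psi_j$ along a single Groves--Manning geodesic. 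Take $\sigma$ with $\wt\Psi_0(\sigma)=\wh v$ and times $r_k$ with $\wt\sft_0(\sigma,r_k)=T_k(v)$. Then $\sigma(0)$ and $\msf c_k(v)^{-1}\sigma(r_k)$ lie in a fixed compact set, so $\psi_j(\mu_\theta(\msf c_k(v)))=\wt\sft_j(\sigma,r_k)+O(1)$. The cocycle identity for $\wt\sft_j$ then gives almost additivity. This is the same mechanism you already use for $j=0$ in \eqref{eq:T_n}.

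Fekete's lemma then yields $\lvert\ell^j_\vv-\frac1n\int_K\psi_j(\mu_\theta(\msf c_n))\,\de\frakm^0_\vv\rvert\leqslant B_j\frakm^0_\vv(K)/n$. So $\ell^j$ is a locally uniform limit of the fixed-$n$ integrals and is continuous wherever they are. By contrast, upper semicontinuity of $\lambda_j$ and $\lambda_0$ alone would not even give semicontinuity of their ratio.

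\textbf{The fallback fails.} Your alternative through \Cref{lemma: manhattan slope inequalities} does not work. Take any convex $\Theta$ with a kink, e.g.\ $\Theta(v)=\lvert v\rvert$, and set $-\tau^j_\vv$ equal to the left partial derivative $\partial^-_j\Theta(\vv)$. This satisfies both slope inequalities and makes $\tau^j$ upper semicontinuous, yet $\Theta$ is not differentiable.

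\textbf{Two further points.}
\begin{itemize}
\item \Cref{lemma: good local section} produces a box with null boundary for one prescribed measure. So, as in the paper, $\wh K$ must be adapted to the base point $\vv^0$. You must then show that the drift computed with this $\wh K$ equals $\tau_\vv$ for every $\vv$; the paper does this through the intrinsic ratio of \Cref{lem:deftau}, whereas your $\tau_\vv=\lambda_j(\vv)/\lambda_0(\vv)$ is tied to $K$.
\item Your worry about uniform integrability is justified. For fixed $n$ the integrand is unbounded on $K$: $\psi_0(\mu_\theta(\msf c_n))=T_n+O(1)$, and $\int_K T_1\,\de\frakm^0_\vv$ is essentially the total mass of $\frakm^0_\vv$. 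So vague convergence plus almost-everywhere continuity is not enough. The paper's proof does not address this explicitly. What is needed is that no $\frakm^0_\vv$-mass escapes into the cusps as $\vv\to\vv^0$, for instance by making the cusp estimate in the proof of \Cref{lemma: finiteness and ergodicity} uniform for $\vv$ near $\vv^0$. Your proposal leaves this step as a plan.
\end{itemize}
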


Our measurable space is $M=\Lam_\theta$, and for $\vv\in \R^m$ we let $\nu_{\vv}=\nu_{\p_\vv}$ be the unique $(\Gamma,\p_{\vv})$-Patterson--Sullivan probability measure on $\Lam_\theta$, and $\wh \nu_{\vv}$ be the unique $(\Gamma,\p_{\vv} \circ i)$-Patterson--Sullivan probability measure on $\Lam_{i(\theta)}$, as in \Cref{definition: Patterson--Sullivan measures}. This settles condition (1).

For condition (2), we fix a basepoint $o'\in \msf G / \msf K$ and define the corresponding shadow at $\gamma\in \Gamma$ by $$O_\vv(\gamma) =O^\thet_{R_\vv}(o',\gamma o')\cap \Lam_\theta.$$ We choose $R_\vv>0$ sufficiently large so that \Cref{lemma: the shadow lemma for relatively Anosov} implies \[C_{\vv}^{-1} e^{-\p_{\vv}(\mu_\theta(\gamma))} \leqslant \nu_{\vv}(O^\theta_{R_\vv}(o',\gamma o')) \leqslant C_{\vv} e^{-\p_{\vv}(\mu_\theta(\gamma))}~\] 
for a constant $C_\vv$ independent of $\gamma$. 

To define the dynamical systems from condition (3) and check conditions (4) and (5), we fix the flow space $(\wt \Omega,\phi)$ with \[\wt\Omega=\Lambda_\theta^{(2)}\times\R, \quad \text{ and } \quad \phi^s(\xi,\eta,r)=(\xi,\eta,r+s)~.\]
We also fix the action of $\Gamma$ on $\wt\Omega$ induced by $\p_0$ as in \Cref{section: preliminary on fibered dynamical systems}, with quotient $\Omega_0=\Omega_{\p_0}$ and quotient map $p:\wt\Omega \ra \Omega_0$. Note that each $\p_{\vv}$ induces a different action on $\wt\Omega$, which we call the $(\Gamma,\p_{\vv})$-\emph{action} if we want to emphasize the dependence on $\p_{\vv}$. The quotient for this action is denoted by $\Omega_{\vv}=\Omega_{\p_{\vv}}$. The flow on $\wt\Omega$ induces a flow on each $\Omega_{\vv}$, which we again denote by $\phi$. 
\\

We fix $\vv\in \Rb^m$, for which we want to find an appropriate compact set $\wh K_\vv\subset \wt \Omega$ to define the first return map, as in \Cref{section: first return}. Let $\sfL_{\vv}=\sfL_{\p_{\vv}}$ be the Bowen--Margulis current on $\Lam_\theta^{(2)}$, and $\wt\frakm_{\vv}=\wt\frakm_{\p_{\vv}}=\sfL_{\vv} \otimes \mathrm{Leb}_\R$ be the measure on $\wt\Omega$ as in \Cref{section: Patterson--Sullivan measures}. Note that each measure $\wt\frakm_{\vv}$ commutes with the $(\Gamma,\p_0)$-action, so it descends to a Radon measure $\frakm_{\vv}^0$ on $\Omega_0$. Be aware that this measure is different from the Bowen--Margulis--Sullivan measure $\frakm_{\p_{\vv}}$ on $\Omega_{\vv}$.

Since $\wt\Omega$ is locally compact metrizable, the flow $\phi$ is continuous, the $(\Gamma,\p_0)$-action on $\wt\Omega$ is properly discontinuous, and each measure $\wt\frakm_{\vw}^0$ on $\Omega_0$ is Radon and has full support, from \Cref{lemma: good local section} we immediately obtain the following.

\begin{lemma}\label{lemma: choice of K}
    There exists a compact set $\wh K_\vv\subset \wt \Omega$ satisfying:
        \begin{itemize}
        \item $p$ restricts to an embedding of $\wh K_\vv$ into $\Omega_0$.
        \item $\wt \frakm_{\vw}(\wh K_\vv)>0$ for all $\vw\in \R^m$.
        \item $\wt \frakm_{\vv}(\partial \wh K_\vv)=0$.
    \end{itemize}
\end{lemma}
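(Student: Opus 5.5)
Apply \Cref{lemma: good local section} to $\wt\Omega=\Lambda_\theta^{(2)}\times\R$ with the $(\Gamma,\p_0)$-action, the translation flow, and $\wt\frakm_\vv=\sfL_\vv\otimes\mathrm{Leb}_\R$. Then check the first two bullets, which concern every $\vw$ and not only $\vv$. The construction is explicit. Fix a point $(\xi_0,\eta_0)\in\Lambda_\theta^{(2)}$, take a compatible metric on $\Lambda_\theta^{(2)}$, and consider boxes $\wh K=B_r\times[s-\ep,s+\ep]$. Here $B_r$ is the closed ball of radius $r$ around $(\xi_0,\eta_0)$.

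\textbf{Embedding.} The $(\Gamma,\p_0)$-action on $\wt\Omega$ is properly discontinuous. If the stabilizer of the chosen point $v_0=(\xi_0,\eta_0,s)$ is trivial, then a small enough neighborhood $U$ of $v_0$ satisfies $\gamma U\cap U=\emptyset$ for all $\gamma\ne e$. Any compact $\wh K\subset U$ then embeds into $\Omega_0$ under $p$. The stabilizer is in fact always trivial: a $\gamma$ fixing $v_0$ must fix $\xi_0$ and $\eta_0$ and satisfy $\p_0(\beta^\theta_{\xi_0}(\gamma^{-1},e))=0$. For a loxodromic $\gamma$ fixing $(\xi_0,\eta_0)$, this quantity equals $\pm\p_0$ of its Jordan projection, which is nonzero by properness and positivity. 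Otherwise we simply take $U$ small and use properness of the action. So we choose $U$ with trivial $\Gamma$-translates intersection and $\wh K\subset U$.

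\textbf{Positivity for all $\vw$.} We need $\wt\frakm_\vw(\wh K)>0$ for every $\vw$. Each $\nu_\vw$ and $\wh\nu_\vw$ has full support on $\Lambda_\theta$ and $\Lambda_{i(\theta)}$ respectively. This is standard: the support is a nonempty closed $\Gamma$-invariant set, and the action on the limit set is minimal. The density $e^{2\p_\vw(G^\theta)}$ is positive and continuous on transverse pairs. Hence $\sfL_\vw$ has full support on $\Lambda_\theta^{(2)}$. Every box with $r,\ep>0$ contains an open product set around a point of the support, so it has positive $\wt\frakm_\vw$-measure for all $\vw$ simultaneously. This requires $(\xi_0,\eta_0)\in\Lambda_\theta^{(2)}$, which is nonempty.

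\textbf{Null boundary.} For $\vv$ fixed, $\sfL_\vv$ is Radon, so the spheres $S_r$ carry zero $\sfL_\vv$-mass for all but countably many $r$. Pick such an $r$ small enough that the box lies in $U$. Then
\[\partial\wh K\subset S_r\times[s-\ep,s+\ep]\cup B_r\times\{s\pm\ep\}\]
is $\wt\frakm_\vv$-null, exactly as in the proof of \Cref{lemma: good local section}.

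The only step beyond \Cref{lemma: good local section} is the uniformity in $\vw$ of the second bullet, which the full-support argument above handles. The expected obstacle is justifying full support of the Patterson--Sullivan measures, via minimality of the $\Gamma$-action on $\Lambda_\theta$.
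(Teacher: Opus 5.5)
Your route is the paper's: apply \Cref{lemma: good local section} to a box in $\wt\Omega$. Proper discontinuity of the $(\Gamma,\p_0)$-action gives the embedding, full support of every $\sfL_\vw$ gives positivity uniformly in $\vw$ (your minimality argument is fine), and Radon-ness of $\sfL_\vv$ gives the null boundary.

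The step that fails is the claim that the stabilizer of an arbitrary $v_0=(\xi_0,\eta_0,s)$ is trivial. On $\mathrm{Stab}_\Gamma(\xi_0)$ the map $\gamma\mapsto\beta^\theta_{\xi_0}(\gamma^{-1},e)$ is a homomorphism to $\fraka_\theta$, by the cocycle identity and $\msf G$-invariance. It therefore vanishes on torsion, so every finite-order element fixing $\xi_0$ and $\eta_0$ fixes $v_0$. Your ``otherwise take $U$ small'' does not dispose of such elements, and they do occur. A nontrivial element of $Z(\msf G)\cap\Gamma$ (e.g.\ $-I$) acts trivially on all of $\wt\Omega$, so no $U$ satisfies $\gamma U\cap U=\emptyset$. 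A finite-order element commuting with a loxodromic $g$ (e.g.\ a rotation about the axis of $g$ in a geometrically finite Kleinian group) fixes $(g^+,g^-,s)$ but moves nearby points. Then no box around that point injects into $\Omega_0$.

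The repair is short. Elements acting trivially do not affect injectivity of $p$, so it suffices to choose $v_0$ whose stabilizer equals the kernel of the action. Proper discontinuity then gives a neighbourhood on which $p$ is injective, and your box construction goes through inside it.

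Such $v_0$ are dense by Baire. Infinite-order elements fix nothing, by proper discontinuity. For each of the countably many finite-order $\gamma$ acting nontrivially, the fixed set in $\wt\Omega$ is $\mathrm{Fix}_{\Lambda_\theta^{(2)}}(\gamma)\times\R$, which is closed with empty interior. Empty interior holds because an element of a non-elementary convergence group fixing a nonempty open $V\subset\Lambda_\theta$ pointwise acts trivially. To see this, take $g$ loxodromic with $g^+\in V$. The conjugates $g^{-n}\gamma g^n$ fix sets exhausting $\Lambda_\theta\setminus\{g^-\}$, so by the convergence property they cannot all be distinct. Hence $\gamma$ commutes with a nonzero power of $g$, so it fixes $\Lambda_\theta\setminus\{g^-\}$ and thus all of $\Lambda_\theta$.

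Alternatively, assume $\Gamma$ torsion-free. The paper's one-line proof is equally silent here, and \Cref{section: first return} even requires $\gamma\wh K\cap\wh K=\emptyset$ for all $\gamma\neq e$. So this gap is shared with the source, but you should not assert the false claim.
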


In what follows, we fix a set $\wh K_\vv$ as in the above lemma and define $K_\vv=p(\wh K_\vv)\subset \Omega_0$. To apply the discussion from \Cref{section: first return} we require the following lemma.

\begin{lemma}\label{lemma: finiteness and ergodicity}
    For each $\vv\in \Rb^m$, the measure $\frakm^0_{\vv}$ on $\Omega_0$ is finite and ergodic.
\end{lemma}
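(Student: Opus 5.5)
The plan is to transfer the finiteness and mixing of the genuine Bowen--Margulis--Sullivan measure $\frakm_{\p_\vv}$ on $\Omega_\vv$ (\Cref{theorem: BMS finite and strongly mixing}) to the measure $\frakm_\vv^0$ on $\Omega_0$. The transfer uses a $\Gamma$-equivariant change of coordinates on $\wt\Omega$ that intertwines the $(\Gamma,\p_\vv)$-action with the $(\Gamma,\p_0)$-action. This conjugacy is a time change, not a flow conjugacy. Both quotients, however, are reparametrizations of the same Groves--Manning geodesic flow via \Cref{theorem: the flow space for relatively Anosov}.

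\textbf{Finiteness.} I would use the Kim--Oh maps $\wt\Psi_0:\Gc(X_{\GM})\to\wt\Omega$ (for $\p_0$) and $\wt\Psi_\vv$ (for $\p_\vv$). The first step is to choose a compact fundamental-domain-like set. Since $\frakm_{\p_\vv}$ is finite, $\frakm_\vv^0$ is finite if and only if $\sfL_\vv$ gives finite mass to the space of $\Gamma$-orbits of bi-infinite geodesics, weighted by the period of the respective flow.

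Concretely, for a Borel transversal $D\subset\Lam_\theta^{(2)}$ of the $\Gamma$-action, one would like to write $\frakm^0_\vv(\Omega_0)=\int \ell_0\,\de\sfL_\vv$ and $\frakm_{\p_\vv}(\Omega_\vv)=\int \ell_\vv\,\de\sfL_\vv$. The action is not free on $\Lam^{(2)}_\theta$, so this formula is only heuristic. The rigorous version compares the measures of flow boxes instead.

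Cover $\Omega_0$ by $\phi$-translates of $p(\wh K)$ for a compact $\wh K$. Then \Cref{theorem: the flow space for relatively Anosov}(3),(5), applied to both $\p_0$ and $\p_\vv$, shows the following: an orbit segment of $\phi$-length $T$ in $\Omega_0$ corresponds to a geodesic segment of length comparable to $T$, and hence to an orbit segment of length comparable to $T$ in $\Omega_\vv$. The comparison constants are uniform.

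Equivalently, and more cleanly, I would use \Cref{lemma: comparison positive linear forms}. It gives $A^{-1}\p_0(\mu_\theta(\gamma))-B\leqslant \p_\vv(\mu_\theta(\gamma))\leqslant A\p_0(\mu_\theta(\gamma))+B$. Hence the two Busemann cocycles $\p_0\circ\beta^\theta$ and $\p_\vv\circ\beta^\theta$ are comparable along the cusp excursions that could cause infinite mass.

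From this comparability I would deduce $\frakm_\vv^0(\Omega_0)\leqslant A'\,\frakm_{\p_\vv}(\Omega_\vv)+B'\,\frakm_{\p_\vv}(\text{compact})<\infty$. The step is to estimate, for a fixed compact flow box $\wh K\times[0,1]$, the mass of its orbit segments in each quotient.

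\textbf{Ergodicity.} The flow-invariant sets of $\Omega_0$ lift to sets $E\subset\Lam^{(2)}_\theta$ that are $\Gamma$-invariant, and this description does not depend on which cocycle is used. The same holds for $\Omega_\vv$. So $\phi$-ergodicity of $\frakm^0_\vv$ is equivalent to ergodicity of $\sfL_\vv$ under the $\Gamma$-action on $\Lam^{(2)}_\theta$, and likewise for $\frakm_{\p_\vv}$. The latter follows from the mixing, hence ergodicity, of $\frakm_{\p_\vv}$ given by \Cref{theorem: BMS finite and strongly mixing}.

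The main obstacle is making the finiteness comparison rigorous in the presence of cusps and of nontrivial stabilizers. I would handle it with the coarse reparametrization of \Cref{theorem: the flow space for relatively Anosov}. Pulling both measures back to $\Gamma\backslash\Gc(X_{\GM})$ expresses each as a measure of the form $\rho\cdot \lambda$, where $\lambda$ is a common geodesic-flow-invariant measure. The densities $\rho$ are the derivatives of the respective time changes $\wt\sft_0,\wt\sft_\vv$, which are bounded above and below by (3). The two masses therefore differ by at most a multiplicative constant.
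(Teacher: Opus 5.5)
Your ergodicity argument is correct. In both $\Omega_0$ and $\Omega_\vv$, $\phi$-invariant sets correspond to $\Gamma$-invariant subsets of $(\Lambda_\theta^{(2)},\sfL_\vv)$. So mixing of $\frakm_{\p_\vv}$ (\Cref{theorem: BMS finite and strongly mixing}) gives ergodicity of $\sfL_\vv$, and hence of $\frakm^0_\vv$. The paper instead takes ergodicity of $\sfL_\vv$ from Canary--Zhang--Zimmer, and either input works.

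The finiteness part has a genuine gap. All the difficulty is the mass deep in the cusps, and you reduce it to $\frakm^0_\vv(\Omega_0)\leqslant A'\frakm_{\p_\vv}(\Omega_\vv)+B'(\cdots)$ without proving that inequality. The ``period'' formula has no rigorous meaning. The obstruction is not freeness: $\Gamma$ acts ergodically on the infinite non-atomic measure $\sfL_\vv$, so there is no measurable transversal and typical flow lines are not closed. The mechanism you finally propose also fails as stated:
\begin{itemize}
\item \Cref{theorem: the flow space for relatively Anosov}~(3) only gives $a\abs{s}-B\leqslant\wt\sft(\sigma,\abs{s})\leqslant a'\abs{s}+B$. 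This is a large-scale bound with an additive error. Nothing in that theorem makes $s\mapsto\wt\sft(\sigma,s)$ differentiable, or even monotone, so there is no density bounded away from $0$ and $\infty$.
\item $\wt\Psi_0$ and $\wt\Psi_{\p_\vv}$ are not injective; (4) only bounds their fibers. So there is no given common $\varphi$-invariant measure $\lambda$ on $\Gamma\backslash\Gc(X_{\GM})$ that pushes forward to both $\frakm^0_\vv$ and $\frakm_{\p_\vv}$. Building one would need a $\Gamma$-equivariant measurable family of measures on geodesics with prescribed endpoints.
\item Even with such a $\lambda$, bounded densities only appear after replacing $\wt\sft$ by a cohomologous smoothing such as $(\sigma,s)\mapsto\frac1T\int_0^T\wt\sft(\varphi^u\sigma,s)\,\de u$.
\end{itemize}

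The paper never compares with $\frakm_{\p_\vv}$. Following Kim--Oh's finiteness proof, it splits $\wt\Omega$ into two pieces:
\begin{itemize}
\item $\wt\Psi_0(\Gc_{\mathrm{thick}})$, on which $\Gamma$ acts cocompactly, so its image has finite mass because $\frakm^0_\vv$ is Radon;
\item the cusp regions, where $\frakm^0_\vv(\Gc_P)\leqslant\sum_{\gamma\in P}(C+\p_0(\mu_\theta(\gamma)))e^{-(1-\epsilon)\p_\vv(\mu_\theta(\gamma))}$.
\end{itemize}
\Cref{lemma: comparison positive linear forms} turns the prefactor into $O(1+\p_\vv(\mu_\theta(\gamma)))$. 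The series then converges by the Canary--Zhang--Zimmer gap $\delta_{\p_\vv}(P)<\delta_{\p_\vv}(\Gamma)=1$.

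If you want to keep your transfer strategy, a workable rigorous version is visit-counting on a small flow box $D\times[0,1]\subset\wt\Omega$ that embeds in both quotients:
\begin{itemize}
\item Since $\frakm^0_\vv$ is ergodic and $\sfL_\vv$ is non-atomic, $\frakm^0_\vv$ is conservative.
\item If $\frakm^0_\vv$ were infinite, Hopf's ratio ergodic theorem would make the number of visits to the box up to time $T$ be $o(T)$ almost surely. In $\Omega_\vv$ that number grows linearly by Birkhoff.
\item In both quotients the visits of the orbit of $(\xi,\eta,t)$ are indexed by the same elements $\gamma$ with $\gamma^{-1}(\xi,\eta)\in D$.
\item By \Cref{theorem: the flow space for relatively Anosov}~(5), the forward visits occur within bounded error of $\p_0(\mu_\theta(\gamma))$, resp.\ $\p_\vv(\mu_\theta(\gamma))$. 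These are linearly comparable by \Cref{lemma: comparison positive linear forms}, which gives the contradiction.
\end{itemize}
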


For the proof of this lemma and subsequent results, we let $(X_{\GM},\msf{d}_{\GM})$ be a Groves--Manning cusp space for the pair $(\Gamma,\Pc)$ with induced limit map $f:\partial X_{\GM} \to \Lambda_\theta$ and geodesic flow space $(\Gc(X_{\GM}),\varphi)$, as in \Cref{section: relatively hyperbolic groups}. Also, for $j=0,1,\dots,m$, we apply \Cref{theorem: the flow space for relatively Anosov} to $\psi_j$ and let \[\wt\Psi_j:\Gc(X_{\GM})\to \wt \Omega \quad \text{and} \quad \wt \sft_j:\Gc(X_{\GM})\times\Rb \to \Rb\] be the associated reparametrization and translation cocycle, respectively. We also let $\wh \sft_j:\wt\Omega \times \R \ra \R$ and $\ov\sft_j:\Omega_{\p_j} \times \R \ra \R$ be the corresponding inverse almost cocycles.

\begin{proof}[Proof of \Cref{lemma: finiteness and ergodicity}]
  We first provide a sketch for the proof of finiteness of $\frakm_{\vv}$ as it closely follows the proof of \cite[Theorem~9.1]{kim2025relatively}. There is a $\Gamma$-invariant decomposition \[\wt \Omega = \wt \Psi_0(\Gc_{\mathrm{thick}})\cup \Gamma \left(\bigcup_{P\in \Pc}\Gc_P\right)~\]
  so that the action of $\Gamma$ on $\wt\Psi_0(\Gc_{\mathrm{thick}})$ is cocompact. Then $\frakm_{\vv}^0(\Gamma \bs \wt\Psi_0(\Gc_{\mathrm{thick}}))$ is finite since $\frakm_{\vv}^0$ is Radon. In addition, for each $P\in \Pc$ and any $\ep>0$ small enough there is $C>0$ such that
    \begin{align*}
        \frakm_{\vv}^0(\Gc_P) \leqslant & \sum_{\gamma\in P} (C+\psi_0(\mu_\theta(\gamma))) e^{-(1-\epsilon)\psi_{\vv}(\mu_\theta(\gamma))} \\
        \leqslant & \sum_{\gamma\in P} (C+C\psi_{\vv}(\mu_\theta(\gamma))+C) e^{-(1-\epsilon)\psi_{\vv}(\mu_\theta(\gamma))} < +\infty.
    \end{align*}
    Here, the first inequality follows as in the proof of the bound for $m_\psi(P \bs \wt \Psi(\Gc_P))$ in \cite[Theorem~9.1]{kim2025properly}, the second inequality follows from \Cref{lemma: comparison positive linear forms}, and the third inequality follows from \cite[Corollary~7.2]{canary2025patterson} and the fact that $\del_{\p_{\vv}}(\Gamma)=1$.

    Next, we show that $\frakm^0_{\vv}$ is ergodic for the flow $\phi$ on $\Omega_0$. Indeed, by \cite[Corollary~1.5]{canary2025patterson} it follows that the measure $\sfL_{\vv}$ on $\Lambda_\theta^{(2)}$ is ergodic for the diagonal action of $\Gamma$. Then a standard Hopf argument implies that $\frakm^0_{\vv}$ is ergodic (see for example \cite[Theorem~11.2, Section~11.5]{blayac2024patterson}).
\end{proof}

By \Cref{prop:m_tctnous+finite}, we let $(\Fc_{\vv},\wh \frakm_{\vv},\Phi_{\vv}) = (\Fc_{K_\vv}, \wh \frakm_{\vv}^{\Phi_\vv}, \Phi_\vv)$ be the first return dynamical system induced by $(\Omega_0, \frakm_{\vv}^0, \phi)$ and $K_\vv$, and let $(\msf c_{\vv,n})_n$ be the cocycle induced by this first return and $\wh K_\vv$. We also set $\pi_{\vv}: \Fc_\vv\to \Lambda_\theta$ according to $\pi(v)= v^+$. This is the data from conditions (3)-(5), and the full measure assertion from condition (5) follows from the local product structure of $\wt\frakm_\vv$. 

Condition (6) holds by the proof of condition (5) in \Cref{theoremalpha: exact dimensionality}, so we are left to prove condition (7). The values $\tau_\vv^j$ in that condition are given by the next lemma.

\begin{lemma}\label{lem:deftau}
    For each $\vv\in \Rb^m$ there exists a positive number $\tau_{\vv}^j$ such that for $\wt\frakm_{\vv}$-almost every $v\in \wt\Omega$ we have
    \[\lim_{ s\to \infty} \frac{\wh \sft_0(v,s)}{\wh \sft_j(v,s)} = \tau_{\vv}^j~.\]
\end{lemma}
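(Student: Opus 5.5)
The plan is to derive the limit from an ergodic theorem on the fixed flow space $(\Omega_0,\frakm^0_\vv,\phi)$. By \Cref{lemma: finiteness and ergodicity}, $\frakm^0_\vv$ is finite and ergodic there. The key point is to express both $\wh\sft_0(v,s)$ and $\wh\sft_j(v,s)$ through one almost cocycle over the $\psi_0$-flow, so that only a single ergodic theorem is needed. Statements proved $\frakm^0_\vv$-almost everywhere on $\Omega_0$ lift to $\wt\frakm_\vv$-almost everywhere statements on $\wt\Omega$, since $p$ is a countable-to-one quotient and $\wt\frakm_\vv$ is the lift of $\frakm^0_\vv$.

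\textbf{Step 1: define the comparison cocycle.} For $v\in\wt\Omega$ and $s\in\R$, choose $\sigma_v\in\wt\Psi_0^{-1}(v)$ and set
\[ g_j(v,s)=\wt\sft_j\bigl(\sigma_v,\wh\sft_0(v,s)\bigr). \]
This is the $\psi_j$-time elapsed along a Groves--Manning geodesic during $\psi_0$-time $s$.
\begin{itemize}
\item By \Cref{theorem: the flow space for relatively Anosov}~(4), applied to both $\wt\Psi_0$ and $\wt\Psi_j$, changing the choice of $\sigma_v$ changes $g_j$ by a uniformly bounded amount.
\item The same bounded-fiber property, together with \Cref{lemma: hat t almost cocycle}, shows that $g_j$ is a measurable almost cocycle for $\phi$: $|g_j(v,s+s')-g_j(v,s)-g_j(\phi^s v,s')|\leqslant C$.
\item $g_j$ is invariant under the $(\Gamma,\psi_0)$-action, because $\wh\sft_0$ and $\wt\sft_j$ are $\Gamma$-invariant. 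Hence it descends to $\Omega_0$.
\item Combining \Cref{theorem: the flow space for relatively Anosov}~(3) for $\wt\sft_0$ and $\wt\sft_j$ gives linear bounds $a''s-B''\leqslant g_j(v,s)\leqslant A''s+B''$ for $s\geqslant0$.
\end{itemize}
In particular, $g_j(\cdot,1)$ is bounded, hence integrable for the finite measure $\frakm^0_\vv$.

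\textbf{Step 2: apply Kingman's theorem.} Apply Kingman's subadditive ergodic theorem to the time-one map $\phi^1$ on $(\Omega_0,\frakm^0_\vv)$, once to the subadditive sequence $g_j(\cdot,n)+C$ and once to the superadditive sequence $g_j(\cdot,n)-C$. Both give the same limit because $C/n\to0$, so $g_j(v,n)/n\to\lambda_j$ almost everywhere. Ergodicity of $\phi^1$ is needed here. If only flow-ergodicity is available, I would take the flow-invariant function $\limsup_s g_j(v,s)/s$, which is constant by the almost cocycle property, and compare it with the liminf. Alternatively, mixing can be used for $\psi_0$-type measures as in \Cref{lemma: mixing implies ergodic first return}.

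Bounded increments over unit time interpolate this to continuous time, giving $g_j(v,s)/s\to\lambda_j$. The lower linear bound in Step 1 forces $\lambda_j\geqslant a''>0$. The same argument applied to $g_0(v,s)=\wh\sft_0(v,s)$ itself gives $\wh\sft_0(v,s)/s\to\kappa_0>0$ almost everywhere.

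\textbf{Step 3: invert to get the ratio.} By construction, $\phi^s(v)=\wt\Psi_0(\varphi^{\wh\sft_0(v,s)}\sigma_v)$. Viewing the same geodesic through $\wt\Psi_j$ and using \Cref{theorem: the flow space for relatively Anosov}~(4) for $\wt\Psi_j$, I expect
\[ \bigl|\wh\sft_j\bigl(v,g_j(v,s)\bigr)-\wh\sft_0(v,s)\bigr|\leqslant C'. \]
Since $s\mapsto g_j(v,s)$ is coarsely monotone and coarsely surjective onto $[0,\infty)$ by the linear bounds, every large $u$ can be written as $u=g_j(v,s)+O(1)$. The bounded-increment property of $\wh\sft_j$ then yields
\[ \frac{\wh\sft_j(v,u)}{u}\to\frac{\kappa_0}{\lambda_j}. \]
Therefore $\wh\sft_0(v,s)/\wh\sft_j(v,s)\to\lambda_j$, and we set $\tau^j_\vv:=\lambda_j>0$.

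\textbf{Main obstacle.} The delicate part is the bookkeeping in Steps 1 and 3: proving that $g_j$ is a genuine measurable almost cocycle, well defined up to bounded error and independent of the choice of lift geodesic, and establishing the inversion estimate relating $\wh\sft_j$ and $\wh\sft_0$ along the same geodesic. Both rely on the uniform bounded-fiber property (4) for two different reparametrizations. The ergodic-theoretic input itself is standard once that is in place.
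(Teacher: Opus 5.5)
Your argument works, but it takes a different route from the paper, and one estimate in Step~3 is stated too strongly.

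\textbf{Comparison with the paper.} The paper never compares the two reparametrizations along a common geodesic. It applies Kingman's theorem separately to the two inverse almost cocycles:
\begin{itemize}
\item to $\ov\sft_0$ on $\Omega_0=\Omega_{\p_0}$ with $\frakm^0_\vv$, giving $\wh\sft_0(v,s)/s\to\lambda^0_\vv$;
\item to $\ov\sft_j$ on the \emph{other} quotient $\Omega_{\p_j}$ (for the $(\Gamma,\p_j)$-action), with the measure induced there by $\wt\frakm_\vv$, giving $\wh\sft_j(v,s)/s\to\lambda^j_\vv$.
\end{itemize}
It then sets $\tau^j_\vv=\lambda^0_\vv/\lambda^j_\vv$. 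This is shorter, but it requires rerunning \Cref{lemma: finiteness and ergodicity} for $\wt\frakm_\vv$ on $\Omega_{\p_j}$. Your approach stays on $\Omega_0$ and uses only the finiteness and ergodicity of $\frakm^0_\vv$ that the paper actually proves. The cost is building $g_j$ and inverting it.

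In return, you obtain $\tau^j_\vv$ directly as the asymptotic ratio of $\p_j$-time to $\p_0$-time along a generic Groves--Manning geodesic. That is the form needed in \Cref{prop:taucontinuousatt0}. There the paper compares $\wt\sft_j(\sigma,r_n)$ with $\wt\sft_0(\sigma,r_n)$ and then converts back to $\wh\sft_0,\wh\sft_j$, which is essentially your Step~3.

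Your handling of ergodicity of $\phi^1$, via flow-invariance of the limit, is the right one. The alternative via mixing is not available: \Cref{theorem: BMS finite and strongly mixing} concerns $\frakm_{\p_\vv}$ on $\Omega_{\p_\vv}$, not $\frakm^0_\vv$ on $\Omega_0$.

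\textbf{The estimate to correct.} You claim the uniform bound $\bigl|\wh\sft_j\bigl(v,g_j(v,s)\bigr)-\wh\sft_0(v,s)\bigr|\leqslant C'$. This fails in general because $\wt\Psi_j(\sigma_v)\neq v$. The points $\wt\Psi_0(\sigma_v)$ and $\wt\Psi_j(\sigma_v)$ lie on the same orbit $\{(\xi,\eta)\}\times\R$. However, their $\R$-coordinates are normalized by $\p_0$- and $\p_j$-Busemann functions respectively. So $\wt\Psi_j(\sigma_v)=\phi^{u_0(v)}v$, with $u_0(v)$ unbounded on $\wt\Omega$.

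\Cref{theorem: the flow space for relatively Anosov}~(4) for $\wt\Psi_j$ gives the uniform estimate only with base point $\phi^{u_0(v)}v$. Moving the base point back to $v$, using the almost cocycle property of $\wh\sft_j$ and its linear bounds, costs an error $O(|u_0(v)|+1)$. This error depends on $v$ but not on $s$. Since the lemma asserts a pointwise limit as $s\to\infty$, the rest of Step~3 goes through once the constant is allowed to depend on $v$.
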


\begin{proof}
    Consider the almost cocycle $\ov \sft_0$ on $\Omega_0$. By \Cref{theorem: the flow space for relatively Anosov} and the definition of $\wh \sft_0$, we can find constants $a,a'>0$ such that for all $v\in \Omega_0$ and all $s>0$ large enough we have 
    \begin{equation}\label{eq:ineqt_0}
        a\leqslant \frac{\ov\sft_0(v,s)}{s}\leqslant a'.
    \end{equation}
    On the other hand, the measure $\frakm^0_{\vv}$ is finite and ergodic by \Cref{lemma: finiteness and ergodicity}, and hence $\ov \sft_0$ is a $\frakm_{\vv}^0$-integrable almost cocycle by \Cref{lemma: hat t almost cocycle}. By Kingman's subadditive ergodic theorem and \Cref{eq:ineqt_0}, there exists a positive number $\lam^0_{\vv}>0$ such that for $\wt\frakm_{\vv}$-almost every $v\in \wt\Omega$ we have \[\lim_{ s\to \infty} \frac{\wh \sft_0(v,s)}{s} = \lam^0_{\vv}~.\]
    The exactly same argument applied to $\ov\sft_j$ on $\Omega_{\p_j}$ and the measure induced by $\wt\frakm_{\vv}$ on this set gives us the corresponding positive number $\lam_{\vv}^j$. The result then follows by setting $\tau^j_{\vv}=\lam_{\vv}^0/\lam_{\vv}^j$.
\end{proof}

To complete the proof of \Cref{theorem: manhattan regularity for relatively Anosov representations}, it remains to show the continuity of $\tau_{\vv}^j$ and the limit identity in condition (7). This is done in the next section. 

\subsection{Continuity of the drifts}
We keep the setting and notation as in previous section. We fix $\vv^0\in \R^m$ and abbreviate $\wh K=\wh K_{\vv^0}, K=K_{\vv^0}, \Phi=\Phi_{\vv^0}$, and $\msf c=\msf c_{\vv^0}$. Our goal is to prove the following result.

\begin{proposition}\label{prop:taucontinuousatt0}
    For any $\vv\in \Rb^m$ we have that \begin{equation}\label{eq:deftau_t}
    \frac{\p_j(\mu_\theta(\msf c_n(v)))}{\p_0(\mu_\theta(\msf c_n(v)))}\to \tau_{\vv}^j \quad \text{ as }n\to \infty
    \end{equation}
    for $\frakm^0_{\vv}$-almost every $v\in K$. Moreover, the function $\vv\mapsto \tau_{\vv}^j$ is continuous at $\vv^0$. 
\end{proposition}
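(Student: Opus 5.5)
We split the argument into the limit identity \eqref{eq:deftau_t} and the continuity of $\vv\mapsto \tau^j_\vv$ at $\vv^0$. Both rest on one fact: along the first return to the fixed compact set $\wh K$ (taken with respect to the $(\Gamma,\p_0)$-action), the group elements $\msf c_n(v)$ track the reparametrizations $\wh\sft_0$ and $\wh\sft_j$ up to bounded error.

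\textbf{Step 1: the limit identity.} Fix $\vv$. Since $\frakm^0_\vv$ is finite and ergodic by \Cref{lemma: finiteness and ergodicity} and $\wt\frakm_\vv(\wh K)>0$ by \Cref{lemma: choice of K}, the first return to $K$ is defined $\frakm^0_\vv$-almost everywhere on $K$, and so is the cocycle $\msf c_n$. Let $\wh v\in\wh K$ be the lift of $v$. Then $\phi^{T_n(v)}\wh v\in \msf c_n(v)\wh K$. Pick $\sigma\in\wt\Psi_0^{-1}(\wh v)$ and argue as in the proof of condition (6) of \Cref{theoremalpha: exact dimensionality}, using the compact set $Q_0=\{\sigma(0):\sigma\in\wt\Psi_0^{-1}(\wh K)\}$ and \Cref{theorem: the flow space for relatively Anosov}(5). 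This gives $|T_n(v)-\p_0(\mu_\theta(\msf c_n(v)))|\leqslant C$, and also $|\wh\sft_0(\wh v,T_n(v)) - \msf d_{\GM}\text{-time}|$ bounded, so that $u_n:=\wh\sft_0(\wh v,T_n(v))$ is the time at which a geodesic through $\sigma(0)$ reaches a point within bounded distance of $\msf c_n(v)\sigma(0)$.

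For $\p_j$, the endpoints $\wh v^\pm$ are the same, so we take $\wt\Psi_j(\sigma)=:w\in\wt\Omega$. Its $\R$-coordinate differs from that of $\wh v$ by a bounded amount, because $\wh K$ is compact and the maps are continuous and proper. The point $\wt\Psi_j(\varphi^{u_n}\sigma)$ then lies in $\msf c_n(v)$ times a fixed compact set for the $(\Gamma,\p_j)$-action. Applying \Cref{theorem: the flow space for relatively Anosov}(5) to $\psi_j$ gives $|\wt\sft_j(\sigma,u_n)-\p_j(\mu_\theta(\msf c_n(v)))|\leqslant C'$. Composing the two reparametrizations, with $s_n:=\wt\sft_j(\sigma,u_n)$, we get
\[\frac{\p_j(\mu_\theta(\msf c_n(v)))}{\p_0(\mu_\theta(\msf c_n(v)))}=\frac{s_n+O(1)}{T_n(v)+O(1)}.\]
Since $T_n\to\infty$, this ratio has the same limit as $\wh\sft_j(w,\cdot)/\wh\sft_0(\wh v,\cdot)$ evaluated along the common geodesic time. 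By \Cref{lem:deftau}, applied at $\wt\frakm_\vv$-generic points (and $\wh K$-points generic for $\frakm^0_\vv$ lift to $\wt\frakm_\vv$-generic points), that limit equals the drift ratio. This yields \eqref{eq:deftau_t}, possibly after fixing the orientation of the quotient in \Cref{lem:deftau} so that it matches $\p_j/\p_0$.

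\textbf{Step 2: continuity at $\vv^0$.} This is the main obstacle, because the generic sets for different $\vv$ are mutually singular, so we cannot compare almost-sure limits pointwise. We instead express $\tau^j_\vv$ as a ratio of space averages over the fixed compact $K$, using Kingman's theorem for the integrable subadditive sequences $a_n(v)=\p_0(\mu_\theta(\msf c_n(v)))$ and $b_n(v)=\p_j(\mu_\theta(\msf c_n(v)))$, which are almost subadditive up to a bounded constant. Kac's lemma gives $a_n/n\to \frakm^0_\vv(K)^{-1}$ after normalizing by $\frakm^0_\vv(K)$, so
\[\tau^j_\vv=\lim_n \frac{\frakm^0_\vv(K)}{n}\int_K b_n\,\de\frakm^0_\vv\Big/\,\frakm^0_\vv(K)=\inf_n \frac1n\int_K (b_n+C)\,\de\frakm^0_\vv\big/\cdots,\]
where the exact normalization is fixed by Kac. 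The infimum of continuous-in-$\vv$ quantities is upper semicontinuous, and applying the same argument to $-\p_j$ relative to a suitable combination (or to $A\p_0-\p_j$, which is also almost subadditive by \Cref{lemma: comparison positive linear forms}) gives lower semicontinuity.

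For each fixed $n$, $\vv\mapsto\int_K b_n\,\de\frakm^0_\vv$ is continuous at $\vv^0$ by the following argument. The map $\vv\mapsto\nu_\vv,\wh\nu_\vv$ is weak$^*$ continuous, by uniqueness in \Cref{theorem: PS existence uniqueness ergodicity} together with the continuity of $\Theta$ and compactness of probability measures on $\Lambda_\theta$. Hence $\frakm^0_\vv|_K\to\frakm^0_{\vv^0}|_K$ vaguely. The function $b_n$ is bounded on $K$: by \eqref{eq:T_n}-type bounds together with the uniform integrability of return times, whose tails are controlled via the cusp estimate used in \Cref{lemma: finiteness and ergodicity}, uniformly for $\vv$ near $\vv^0$. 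Moreover, $b_n$ is continuous off a set that is $\frakm^0_{\vv^0}$-null by \Cref{lemma: continuous almost everywhere}, using $\wt\frakm_{\vv^0}(\partial\wh K)=0$. A portmanteau argument then gives convergence of the integrals.

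Establishing the required uniform integrability of $b_n$ near $\vv^0$ is the delicate point. We handle it by truncating the return time at level $L$, which gives a continuous truncated integral, and bounding the tail uniformly in $\vv$ using the cusp sums $\sum_{\gamma\in P}\p_0(\mu_\theta(\gamma))e^{-(1-\epsilon)\p_\vv(\mu_\theta(\gamma))}$, which are locally uniform in $\vv$.
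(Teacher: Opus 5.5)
Your outline is the paper's proof. Step~1 tracks $\msf c_n(v)$ along $\sigma\in\wt\Psi_0^{-1}(\wh v)$, using \Cref{theorem: the flow space for relatively Anosov}~(5) for both $\p_0$ and $\p_j$, and then invokes \Cref{lem:deftau}. Its orientation already matches, since $\wt\sft_i(\sigma,u)/u\to 1/\lambda^i_\vv$. Step~2 approximates $\ell^j_\vv$ by $\frac1n\int_K\p_j(\mu_\theta(\msf c_n))\,\de\frakm^0_\vv$ on the fixed set $K=K_{\vv^0}$ and then uses \Cref{prop:m_tctnous+finite} together with \Cref{lemma: continuous almost everywhere}.

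Two minor differences:
\begin{itemize}
\item Getting \eqref{eq:deftau_t} directly from \Cref{lem:deftau} is fine. It needs only ergodicity of the flow, and integrating the pointwise identity gives $\tau^j_\vv=\ell^j_\vv/\ell^0_\vv$ without ergodicity of $\Phi$.
\item \Cref{lem:almostadditive} gives two-sided almost additivity. So the bound \eqref{eq.L_B} yields continuity of $\ell^j_\vv$ at once, and your semicontinuity detour through $A\p_0-\p_j$ is unnecessary.
\end{itemize}

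The point you single out as delicate is genuine, and the paper's proof does not address it either. At this step the paper invokes only vague continuity and \Cref{lemma: continuous almost everywhere}, which handle bounded integrands. But $v\mapsto\p_j(\mu_\theta(\msf c_n(v)))$ is unbounded on $K$, because return times to $K$ are unbounded. Your sentence asserting that $b_n$ is bounded on $K$ is false and should go; only uniform integrability can hold.

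Note that Kac and \Cref{lem:almostadditive} give $\ell^0_\vv=\int_K T_1\,\de\frakm^0_\vv=\frakm^0_\vv(\Omega_0)$. So continuity of $\ell^0_\vv$, which is also hidden in your normalization ``fixed by Kac'', is exactly continuity of the total mass, i.e.\ no escape of mass into the cusps. Vague convergence only gives $\liminf_{\vv\to\vv^0}\frakm^0_\vv(\Omega_0)\geqslant\frakm^0_{\vv^0}(\Omega_0)$.

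Your proposed fix is the right idea but is not yet an argument, for two reasons.
\begin{itemize}
\item[(i)] Local uniformity of the cusp bound in \Cref{lemma: finiteness and ergodicity} must be checked. The constants there, coming from the shadow lemma and Gromov-product bounds for $\nu_\vv,\wh\nu_\vv$, must be locally uniform in $\vv$. You also need a uniform gap $\delta_{\p_\vv}(P)\leqslant 1-\eta$ for $\vv$ near $\vv^0$. This follows from the strict gap $\delta_{\p_{\vv^0}}(P)<1$, from $\p_\vv\to\p_{\vv^0}$, and from $\norm{\mu_\theta(\gamma)}\leqslant A\,\p_{\vv^0}(\mu_\theta(\gamma))+B$.
\item[(ii)] Long returns to $K$ also come from orbits lingering in the thick part, e.g.\ near periodic orbits missing $K$. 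So cusp sums alone do not bound $\int_{K\cap\{T_n>L\}}T_n\,\de\frakm^0_\vv$.
\end{itemize}

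A clean way to close the argument:
\begin{enumerate}
\item Use (i) to get tightness. Hence $\frakm^0_\vv(\Omega_0)\to\frakm^0_{\vv^0}(\Omega_0)$, and $\frakm^0_\vv|_K\to\frakm^0_{\vv^0}|_K$ weakly.
\item For each $L$, $\int_K\min(T_n,L)\,\de\frakm^0_\vv$ converges, because the integrand is bounded and continuous $\frakm^0_{\vv^0}$-almost everywhere.
\item Kac's lemma gives $\int_K T_n\,\de\frakm^0_\vv=n\,\frakm^0_\vv(\Omega_0)$. Combined with step~2, this yields uniform integrability of $T_n$ on $K$.
\item Hence $\p_j(\mu_\theta(\msf c_n))\leqslant A(T_n+B_0)+B$ is uniformly integrable too, by \Cref{lemma: comparison positive linear forms} and \Cref{lem:almostadditive}, and the portmanteau step goes through.
\end{enumerate}
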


Since $\vv^0$ is arbitrary, we immediately obtain the following, which is the last step in the proof of \Cref{theorem: manhattan regularity for relatively Anosov representations}.
\begin{corollary}\label{coro:tautcontinuous}
    The function $\vv\mapsto \tau_{\vv}^j$ is continuous.
\end{corollary}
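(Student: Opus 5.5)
The corollary follows directly from \Cref{prop:taucontinuousatt0} once we check that the quantity whose continuity is asserted does not depend on the auxiliary choices made in that proposition. The plan is to show that, for each $\vv\in\R^m$, the number $\tau^j_\vv$ is intrinsic, and then to apply the proposition at every base point $\vv^0$.

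First, $\tau_\vv^j$ is defined in \Cref{lem:deftau} as the $\wt\frakm_\vv$-almost sure limit of $\wh\sft_0(v,s)/\wh\sft_j(v,s)$ as $s\to\infty$. The almost cocycles $\wh\sft_0$ and $\wh\sft_j$ come from \Cref{theorem: the flow space for relatively Anosov} applied to $\psi_0$ and $\psi_j$. They are fixed once and for all and do not involve the base point $\vv^0$, the compact set $\wh K_{\vv^0}$, or the cocycle $\msf c_{\vv^0}$. Hence the map $\vv\mapsto\tau_\vv^j$ is a single well-defined function on $\R^m$. The choices made in the previous subsection only provide a convenient way to compute it near $\vv^0$.

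Next, fix an arbitrary $\vv^0\in\R^m$. Apply \Cref{lemma: choice of K} at $\vv^0$ to obtain $\wh K=\wh K_{\vv^0}$, form the first return map $\Phi$ and the cocycle $\msf c$ as in \Cref{section: first return}, and invoke \Cref{prop:taucontinuousatt0}. Its second assertion gives continuity of $\vv\mapsto\tau^j_\vv$ at $\vv^0$. Since $\vv^0$ was arbitrary and the function is the same for every choice of base point, the function is continuous on all of $\R^m$.

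There is no real obstacle at this stage; all the difficulty lies in \Cref{prop:taucontinuousatt0}. The one point to state explicitly is the independence of $\tau^j_\vv$ from $\vv^0$. Its first assertion, the identity \eqref{eq:deftau_t}, supplies condition~(7) of \Cref{theorem: manhattan framework}. Combined with the continuity just established, that theorem then completes the proof of \Cref{theorem: manhattan regularity for relatively Anosov representations}.
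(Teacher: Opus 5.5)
Your proposal is correct and matches the paper, which obtains the corollary immediately from \Cref{prop:taucontinuousatt0} because the base point $\vv^0$ is arbitrary. Your explicit remark is also right: $\tau^j_\vv$ is defined intrinsically in \Cref{lem:deftau}, independently of $\wh K_{\vv^0}$ and $\msf c_{\vv^0}$, and this is the point the paper leaves implicit when passing from continuity at each $\vv^0$ to global continuity.
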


We begin the proof of \Cref{prop:taucontinuousatt0} by showing continuity of the measures $\frakm_\vv^0$ as $\vv$ varies. Recall that the \emph{vague topology} on the space of Borel measures on $\Omega_0$ is such that a sequence of measures $\frakn_n$ converges to $\frakn_\infty$ as $n\to \infty$ if and only if $\int{F}{\de \frakn_n}\to \int{F}{\de \frakn_\infty}$ for each compactly supported continuous function $F:\Omega_0\ra \R$. See for instance \cite[Section~13.2]{klenke}.

\begin{proposition}\label{prop:m_tctnous+finite}
   The assignment $\vv\mapsto \frakm_{\vv}^0$ is continuous in the vague topology.
\end{proposition}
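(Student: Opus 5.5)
Since $\wt\frakm_\vv=\sfL_\vv\otimes\mathrm{Leb}_\R$ and $\frakm^0_\vv$ is its quotient under the fixed $(\Gamma,\p_0)$-action, it suffices to show that for each compactly supported continuous $\wt F$ on $\wt\Omega$ (supported in a set injecting into $\Omega_0$, using a partition of unity on $\Omega_0$) we have $\int \wt F\,\de\wt\frakm_{\vw}\to\int \wt F\,\de\wt\frakm_{\vv}$ as $\vw\to\vv$. After integrating out the $\R$-coordinate, this reduces to vague convergence of the currents $\sfL_{\vw}\to\sfL_\vv$ on $\Lambda_\theta^{(2)}$, i.e.\ convergence of $e^{2\p_\vw(G^\theta(\xi,\eta))}\,\de\nu_\vw(\xi)\,\de\wh\nu_\vw(\eta)$ against compactly supported continuous functions on $\Lambda^{(2)}_\theta$.

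\textbf{Step 1 (coefficients).} $\p_\vw$ depends continuously on $\vw$ in $\fraka_\theta^\ast$, because $\Theta$ is continuous (it is convex and finite). On any compact subset of $\Lambda^{(2)}_\theta$ the Gromov product $G^\theta$ is continuous and bounded, so $e^{2\p_\vw(G^\theta)}\to e^{2\p_\vv(G^\theta)}$ uniformly there.

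\textbf{Step 2 (weak$^*$ continuity of Patterson--Sullivan measures).} This is the main obstacle. Take $\vw_k\to\vv$. By compactness of $\Lambda_\theta$, pass to a subsequence with $\nu_{\vw_k}\to\nu_\infty$ weak$^*$. I will show that $\nu_\infty$ is a $(\Gamma,\p_\vv)$-Patterson--Sullivan measure.
\begin{itemize}
\item It is supported on $\Lambda_\theta$, since $\Lambda_\theta$ is closed.
\item Conformality: for each fixed $\gamma$, the cocycle $\xi\mapsto e^{\p_{\vw_k}(\beta^\theta_\xi(e,\gamma))}$ is continuous in $\xi$ and converges uniformly on $\Fc_\theta$ to $e^{\p_\vv(\beta^\theta_\xi(e,\gamma))}$, because $\beta^\theta_\cdot(e,\gamma)$ is continuous on the compact flag manifold. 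Hence $\int F\circ\gamma^{-1}\,\de\nu_{\vw_k}=\int F\,e^{\p_{\vw_k}(\beta)}\,\de\nu_{\vw_k}$ passes to the limit for continuous $F$.
\end{itemize}
Since $\p_\vv$ is $(\Gamma,\theta)$-proper, positive and tangent to the growth indicator, uniqueness in \Cref{theorem: PS existence uniqueness ergodicity} gives $\nu_\infty=\nu_\vv$. Therefore the whole family converges, and the same argument gives $\wh\nu_\vw\to\wh\nu_\vv$. One subtlety needs checking: positivity of $\p_\vw$ on $\Lc_\theta(\Gamma)\setminus\{0\}$, which holds since each $\p_\vw$ satisfies $\delta_{\p_\vw}(\Gamma)=1$ and is proper, as noted after \eqref{eq:p_vv}.

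\textbf{Step 3 (products and conclusion).} Product measures of weak$^*$ convergent probability measures converge weak$^*$ on $\Lambda_\theta\times\Lambda_{i(\theta)}$. Let $\wt F$ be compactly supported and continuous with support in $\Lambda^{(2)}_\theta\times\R$. Then
\[
f_\vw(\xi,\eta)=e^{2\p_\vw(G^\theta(\xi,\eta))}\int_\R\wt F(\xi,\eta,t)\,\de t
\]
is compactly supported in $\Lambda^{(2)}_\theta$, extends by zero to a continuous function on $\Lambda_\theta\times\Lambda_{i(\theta)}$, and converges uniformly to $f_\vv$ by Step~1. Combining uniform convergence of the integrands with weak$^*$ convergence of $\nu_\vw\otimes\wh\nu_\vw$ gives $\int\wt F\,\de\wt\frakm_\vw\to\int\wt F\,\de\wt\frakm_\vv$. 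Finally, a compactly supported continuous $F$ on $\Omega_0$ lifts, via a finite partition of unity subordinate to sets that lift homeomorphically under $p$, to a finite sum of such $\wt F$, and this proves vague continuity of $\vw\mapsto\frakm^0_\vw$.
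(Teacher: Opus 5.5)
Your proposal is correct and follows the paper's proof essentially step by step. The paper likewise proves weak* continuity of $\vv\mapsto\nu_{\vv},\wh\nu_{\vv}$ by compactness, passing conformality to the limit, and uniqueness (\Cref{lemma: weak star continuity PS densities}). It then gets vague continuity of $\sfL_{\vv}$, and hence of $\wt\frakm_{\vv}$ and $\frakm^0_{\vv}$, from weak* convergence of $\nu_{\vv}\otimes\wh\nu_{\vv}$ together with uniform convergence of the weights $e^{2\p_{\vv}(G^\theta)}$ on compact sets (\Cref{lem:continuousm_t}).

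Your descent to $\Omega_0$ is more explicit than the paper's one-line remark, with one caveat. If $\Gamma$ has torsion fixing points of $\wt\Omega$, sets that lift homeomorphically need not cover. It is cleaner to use a compactly supported cutoff $\wt F_0\geqslant 0$ with $\sum_{\gamma\in\Gamma}\wt F_0\circ\gamma=1$ on the preimage of the support, which needs only proper discontinuity.
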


The proof of this proposition is split into two lemmas. 

\begin{lemma}\label{lemma: weak star continuity PS densities}
    The maps $\vv\mapsto \nu_{\vv}$ and  $\vv\mapsto \wh\nu_{\vv}$ are weak* continuous.
\end{lemma}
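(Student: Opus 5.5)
The plan is a compactness-plus-uniqueness argument. Fix $\vv^0\in\Rb^m$ and a sequence $\vv_n\to\vv^0$. Since $\Lambda_\theta$ is compact, the space of Borel probability measures on it is weak* compact. It therefore suffices to show that every weak* subsequential limit $\nu_\infty$ of $(\nu_{\vv_n})$ is a $(\Gamma,\psi_{\vv^0})$-Patterson--Sullivan measure. Uniqueness in \Cref{theorem: PS existence uniqueness ergodicity}~(2) then forces $\nu_\infty=\nu_{\vv^0}$, which gives convergence of the whole sequence. The argument for $\wh\nu_\vv$ is identical, with $\theta$ replaced by $i(\theta)$ and $\psi_\vv$ by $\psi_\vv\circ i$. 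Uniqueness there also comes from \Cref{theorem: PS existence uniqueness ergodicity}, applied to $\psi_\vv\circ i$, which is $(\Gamma,i(\theta))$-proper with critical exponent $1$ because $\mu_{i(\theta)}(\gamma)=i(\mu_\theta(\gamma^{-1}))$.

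\textbf{Step 1: the forms converge.} Since $\Theta$ is continuous (it is convex and finite), we have $\psi_{\vv_n}=\sum_i v_{n,i}\psi_i+\Theta(\vv_n)\psi_0\to\psi_{\vv^0}$ in $\fraka_\theta^\ast$.

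\textbf{Step 2: passing conformality to the limit.} Conformality of $\nu_{\vv_n}$ says that for every continuous $F$ on $\Lambda_\theta$ and every $\gamma\in\Gamma$,
\[\int F(\gamma\xi)\,\de\nu_{\vv_n}(\xi)=\int F(\xi)\,e^{\psi_{\vv_n}(\beta^\theta_\xi(e,\gamma))}\,\de\nu_{\vv_n}(\xi)~,\]
possibly with $\gamma$ replaced by $\gamma^{-1}$ depending on the convention. For fixed $\gamma$, the map $\xi\mapsto\beta^\theta_\xi(e,\gamma)$ is continuous on the compact set $\Lambda_\theta$ (it is continuous on $\Fc_\theta$ via the Iwasawa cocycle), so it is bounded. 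Hence the densities $e^{\psi_{\vv_n}(\beta^\theta_\xi(e,\gamma))}$ converge uniformly to $e^{\psi_{\vv^0}(\beta^\theta_\xi(e,\gamma))}$. Combining uniform convergence of the integrands with weak* convergence along the subsequence lets us pass to the limit on both sides. So $\nu_\infty$ is $(\Gamma,\psi_{\vv^0})$-conformal, and it is supported on the closed set $\Lambda_\theta$. Since $\psi_{\vv^0}$ is $(\Gamma,\theta)$-proper, positive and tangent to the growth indicator, as noted after \eqref{eq:p_vv}, uniqueness applies.

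\textbf{Main point to check.} The delicate step is whether the uniqueness statement quoted in \Cref{theorem: PS existence uniqueness ergodicity} covers all $(\Gamma,\psi_{\vv^0})$-conformal probability measures on $\Lambda_\theta$, or only those arising from a Patterson construction. Definition~\ref{definition: Patterson--Sullivan measures} defines Patterson--Sullivan measures simply as conformal measures supported on $\Lambda_\theta$, so I expect the former, as in \cite{canary2025patterson}. If that is not the case, the fallback is the shadow lemma, \Cref{lemma: the shadow lemma for relatively Anosov}. With $R$ chosen so that its constant $C$ is uniform in $\vv$ near $\vv^0$, this lemma passes to weak* limits on shadows, giving $\nu_\infty$ the shadow estimate. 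The estimate then yields the divergence-type argument needed for uniqueness via ergodicity. A minor side check is that positivity of $\psi_{\vv}$ on $\Lc_\theta(\Gamma)\setminus\{0\}$ holds uniformly near $\vv^0$, which follows from compactness of the unit section of the limit cone.
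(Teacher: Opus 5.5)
Your proposal is correct and is essentially the paper's proof: weak* compactness on $\Lambda_\theta$, passing the conformality identity to a subsequential limit via uniform convergence of the weights $e^{\psi_{\vv_n}(\beta^\theta_\xi(e,\gamma))}$ (from continuity of $\Theta$ and of the Busemann function), and then uniqueness from \Cref{theorem: PS existence uniqueness ergodicity}. Your worry about the scope of uniqueness resolves as you expected, since Patterson--Sullivan measures are by definition all conformal probability measures supported on $\Lambda_\theta$; the shadow-lemma fallback and the uniform-positivity side check are therefore unnecessary.
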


\begin{proof}
    We only prove the statement for $\nu_{\vv}$ as the proof for $\wh\nu_{\vv}$ is identical. We fix $\vv\in \Rb^m$ and let $(\vv_n)_{n\in \Nb}$ be a sequence in $\Rb^m$ that converges to $\vv$. Since $\Lambda_\theta$ is compact, it is enough to show that any convergent subsequence of $(\nu_{\vv_n})_n$ has $\nu_{\vv}$ as limit. Consider any such subsequence (that we also denote $(\nu_{\vv_n})_n$) and let $\nu$ be its limit. By the uniqueness of $(\Gamma,\p_{\vv})$-conformal measures from \Cref{theorem: PS existence uniqueness ergodicity}, it is enough to show that $\nu$ is $(\Gamma,\p_{\vv})$-conformal. 

    Since each $\nu_{\vv_n}$ is $(\Gamma,\p_{\vv_n})$-conformal, given any $\gamma\in\Gamma$ and any continuous function $F\in C(\Lambda_\theta)$ we have \[\int_{\Lambda_\theta} F(\gamma\xi)\, \de \nu_{\vv_n}(\xi) = \int_{\Lambda_\theta} F(\xi) e^{\psi_{\vv_n}(\beta^\theta_\xi(o',\gamma o'))}\, \de\nu_{\vv_n}(\xi)~.\] In addition, the Busemann function $\xi \mapsto \beta^\theta_\xi(o',\gamma o')$ is continuous, and  $\psi_{\vv_n}\to\psi_{\vv}$ in $\mathfrak a_\theta^\ast$ because $\Theta$ is continuous. Therefore, the weights $e^{\psi_{\vv_n}(\beta^\theta_\xi(o',\gamma o'))}$ converge to $e^{\p_{\vv}(\beta_\xi^\theta(o',\gamma o'))}$ uniformly on $\xi\in \Lambda_\theta$. By passing to the limit we obtain \[\int_{\Lambda_\theta} F(\gamma\xi)\, \de \nu(\xi) = \int_{\Lambda_\theta} F(\xi) e^{\psi_{\vv}(\beta^\theta_\xi(o',\gamma o'))}\, \de \nu(\xi)~,\]
    and hence $\nu$ is $(\Gamma,\p_{\vv})$-conformal.
\end{proof}

The next lemma completes the proof of \Cref{prop:m_tctnous+finite}.

\begin{lemma}\label{lem:continuousm_t}
    The map $\vv\mapsto \sfL_{\vv}$ is continuous for the vague topology on Borel measures on $\Lambda_\theta^{(2)}$. Consequently, the measures $\frakm_{\vv}^0$ on $\Omega_0$ vary continuously for the vague topology.
\end{lemma}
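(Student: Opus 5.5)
We first prove vague continuity of $\vv\mapsto \sfL_\vv$ on $\Lambda_\theta^{(2)}$, then transfer it to $\frakm^0_\vv$ on $\Omega_0$ through a fundamental domain argument.

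\textbf{Step 1 (continuity of the currents).} Let $F:\Lambda_\theta^{(2)}\to\R$ be continuous with compact support $S\subset \Lambda_\theta^{(2)}$, and let $\vv_n\to\vv$. We write
\[\int F\,\de\sfL_{\vv_n}=\int_{\Lambda_\theta\times\Lambda_{i(\theta)}} F(\xi,\eta)\,e^{2\psi_{\vv_n}(G^\theta(\xi,\eta))}\,\de(\nu_{\vv_n}\otimes\wh\nu_{\vv_n})(\xi,\eta)~,\]
where the integrand is extended by zero outside $S$. Since $S$ is a compact subset of the transverse pairs and $G^\theta$ is continuous on $\Fc_\theta^{(2)}$, the function $G^\theta$ is bounded on $S$. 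Moreover $\psi_{\vv_n}\to\psi_\vv$ in $\fraka_\theta^\ast$ by continuity of $\Theta$. Hence the functions $F_n=F\,e^{2\psi_{\vv_n}\circ G^\theta}$ are continuous on the compact space $\Lambda_\theta\times\Lambda_{i(\theta)}$ (they vanish near the non-transverse locus) and converge uniformly to $F_\infty=F\,e^{2\psi_\vv\circ G^\theta}$. By \Cref{lemma: weak star continuity PS densities}, $\nu_{\vv_n}\otimes\wh\nu_{\vv_n}\to\nu_\vv\otimes\wh\nu_\vv$ weak*, since products of weak* convergent probability measures on compact metrizable spaces converge weak*. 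Uniform convergence of the integrands then gives $\int F_n\,\de(\nu_{\vv_n}\otimes\wh\nu_{\vv_n})\to\int F_\infty\,\de(\nu_\vv\otimes\wh\nu_\vv)$, which is the vague convergence $\sfL_{\vv_n}\to\sfL_\vv$. Tensoring with $\mathrm{Leb}_\R$ yields vague convergence $\wt\frakm_{\vv_n}\to\wt\frakm_\vv$ on $\wt\Omega$, using Stone--Weierstrass density of sums of products $F_1(\xi,\eta)F_2(t)$ in $C_c(\wt\Omega)$ together with local uniform mass bounds.

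\textbf{Step 2 (descending to $\Omega_0$).} Let $F\in C_c(\Omega_0)$. Because the $(\Gamma,\psi_0)$-action is properly discontinuous, there is $\wt F\in C_c(\wt\Omega)$ with $\sum_{\gamma\in\Gamma}\wt F\circ\gamma=F\circ p$. Concretely, take a nonnegative $\chi\in C_c(\wt\Omega)$ with $\sum_\gamma\chi\circ\gamma>0$ on $p^{-1}(\mathrm{supp}\,F)$, and set
\[\wt F=\frac{\chi\cdot F\circ p}{\sum_\gamma\chi\circ\gamma}~.\]
The sum in the denominator is locally finite, so $\wt F$ is continuous with compact support. Since each $\wt\frakm_\vw$ is invariant under the $(\Gamma,\psi_0)$-action, we have $\int F\,\de\frakm^0_\vw=\int\wt F\,\de\wt\frakm_\vw$ for every $\vw$. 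Here the same $\wt F$ works for all $\vw$, because the action on $\wt\Omega$ is fixed and independent of $\vw$. Step 1 then gives $\int F\,\de\frakm^0_{\vv_n}\to\int F\,\de\frakm^0_\vv$.

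\textbf{Main obstacle.} The delicate point is in Step 1: the density $e^{2\psi(G^\theta)}$ blows up near the non-transverse locus, so weak* convergence on the full product must be applied only to integrands that are compactly supported inside $\Lambda_\theta^{(2)}$. This is exactly why the statement is about the vague topology and not weak* convergence. A second, smaller technicality is that the invariance identity in Step 2 uses $\Gamma$-invariance of $\wt\frakm_\vw$ under the $\psi_0$-action. This holds because $\wt\frakm_\vw=\sfL_\vw\otimes\mathrm{Leb}$, where $\sfL_\vw$ is diagonally $\Gamma$-invariant and Lebesgue measure is translation invariant, so the choice of cocycle defining the action is irrelevant.
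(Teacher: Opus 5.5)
Your argument is correct and matches the paper's proof: uniform convergence of $F\,e^{2\psi_{\vv}\circ G^\theta}$ for $F$ compactly supported in $\Lambda_\theta^{(2)}$, combined with weak* convergence of $\nu_{\vv}\otimes\wh\nu_{\vv}$ (from weak* continuity of the Patterson--Sullivan measures and convergence of products), then tensoring with $\mathrm{Leb}_\R$ and descending to $\Omega_0$. The only difference is that you spell out the last two steps, which the paper treats as immediate; for the Lebesgue step, Fubini is quicker than Stone--Weierstrass, because $(\xi,\eta)\mapsto\int_\R H(\xi,\eta,t)\,\de t$ already lies in $C_c(\Lambda_\theta^{(2)})$.
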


\begin{proof}
    Let $\vw\in \Rb^m$ and $F$ be a compactly supported continuous function on $\Lambda_\theta^{(2)}$. Since $G^\theta$ is continuous on $\Lambda_\theta^{(2)}$ and $\p_{\vv}$ converges to $\p_{\vw}$ in $\fraka_\theta^\ast$, we have that \[F(\xi,\eta)e^{2\psi_{\vv}(G^\theta(\xi,\eta))}\to F(\xi,\eta)e^{2\psi_{\vw}(G^\theta(\xi,\eta))}\] uniformly as $\vv\to \vw$. By \Cref{lemma: weak star continuity PS densities} and \cite[Theorem~2.8]{billingsley2013convergence}, $\nu_{\vv}\otimes\wh\nu_{\vv}$ weak* converges to $\nu_{\vw}\otimes\wh\nu_{\vw}$. Therefore, \[\int_{\Lambda_\theta^{(2)}} F\, \de\sfL_{\vv}\longrightarrow \int_{\Lambda_\theta^{(2)}} F\, \de \sfL_{\vw}~\]
    and $\sfL_{\vv}$ vaguely converges to $\sfL_{\vw}$. This readily implies that $\wt\frakm_{\vv}=\sfL_{\vv} \otimes \mathrm{Leb}_\R$ vaguely converges to $\wt\frakm_{\vw}$, which in turn implies that $\frakm_{\vv}^0$ vaguely converges to $\frakm_{\vw}^0$.
\end{proof}

For $v\in K$ and $n\geqslant 1$, recall that $T_n(v)\in \N$ is the time such that $\Phi^n(v)=\phi^{T_n(v)}(v)$, which is defined for $\frakm_\vv^0$-almost every $v\in K$ for each $\vv$.

\begin{lemma}\label{lem:almostadditive}
    For $j=0,1,\dots,m$, there exists a constant $B_j\geqslant 0$, depending only on $\wh K$, such that for all $n_1,n_2\geqslant 1$ and every $\vv\in \Rb^m$, for $\frakm_{\vv}^0$-almost every $v\in K$ we have  \[\abs{\p_j(\mu_\theta(\msf c_{n_1+n_2}(v)))-\p_j(\mu_\theta(\msf c_{n_1}(\Phi^{n_2}(v))))-\p_j(\mu_\theta(\msf c_{n_2}(v)))} \leqslant B_j~.\]
    Moreover, for all $n\geqslant 1$ and $\frakm^0_{\vv}$-almost every $v\in K$ we have \[\abs{\p_0(\mu_\theta(\msf c_n(v)))-T_n(v)}\leqslant B_0~.\]
\end{lemma}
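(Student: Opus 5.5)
The plan is to transfer everything to the Groves--Manning cusped space through \Cref{theorem: the flow space for relatively Anosov} applied to $\psi_0$. We work with the fixed compact set $\wh K$ and the precompact set
\[Q=\{\sigma(0):\sigma\in \wt\Psi_0^{-1}(\wh K)\}\subset X_{\GM},\]
which is precompact because $\wt\Psi_0$ is proper. The constants will depend only on $Q$, and hence only on $\wh K$; the measure $\frakm^0_\vv$ enters only through which points $v$ have well-defined return times and cocycle values. This is why the bounds are uniform in $\vv$.

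\textbf{Step 1: the second inequality.} Fix $v$ with lift $\wh v\in\wh K$ and choose $\sigma\in\wt\Psi_0^{-1}(\wh v)$. Then $\sigma(0)\in Q$. By \Cref{theorem: the flow space for relatively Anosov}~(1) and surjectivity onto flow lines, there is $u_n$ with
\[\wt\sft_0(\sigma,u_n)=T_n(v).\]
Since $\wt\Psi_0(\varphi^{u_n}\sigma)=\phi^{T_n(v)}\wh v\in \msf c_n(v)\wh K$, equivariance gives $\msf c_n(v)^{-1}\varphi^{u_n}\sigma\in\wt\Psi_0^{-1}(\wh K)$, so $\msf c_n(v)^{-1}\sigma(u_n)\in Q$. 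Note that $u_n\geqslant0$: $T_n(v)\geqslant 1$, and \Cref{theorem: the flow space for relatively Anosov}~(2),(3) force the sign, after possibly enlarging constants so that small times are absorbed. Item~(5) then yields
\[\abs{\psi_0(\mu_\theta(\msf c_n(v)))-T_n(v)}\leqslant C_Q.\]
We may take $B_0\geqslant C_Q$, and also absorb any bounded error coming from the finitely many $\gamma$ with $\msf d_{\GM}(o,\gamma o)$ bounded.

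\textbf{Step 2: almost additivity for $\psi_j$.} Write $\gamma_1=\msf c_{n_2}(v)$ and $\gamma_2=\msf c_{n_1}(\Phi^{n_2}v)$, so that the cocycle relation gives $\msf c_{n_1+n_2}(v)=\gamma_1\gamma_2$. We need
\[\abs{\psi_j(\mu_\theta(\gamma_1\gamma_2))-\psi_j(\mu_\theta(\gamma_1))-\psi_j(\mu_\theta(\gamma_2))}\leqslant B_j.\]
Use the same geodesic $\sigma$ with times $0\leqslant u_{n_2}\leqslant u_{n_1+n_2}$. The point $\sigma(u_{n_2})$ lies within $\diam Q$ of $\gamma_1\sigma(0)$, and $\sigma(u_{n_1+n_2})$ lies within $\diam Q$ of $\gamma_1\gamma_2\sigma(0)$. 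The cleanest route is to apply \Cref{theorem: the flow space for relatively Anosov}~(5) for $\psi_j$ (with $\wt\Psi_j,\wt\sft_j$) three times. Let $Q_j$ be a compact set containing the $\diam Q$-neighbourhood of $Q$.
\begin{itemize}
\item Along $\sigma$ on $[0,u_{n_2}]$: $\sigma(0)\in Q_j$ and $\gamma_1^{-1}\sigma(u_{n_2})\in Q_j$, so $\wt\sft_j(\sigma,u_{n_2})\approx\psi_j(\mu_\theta(\gamma_1))$.
\item Along $\sigma$ on $[0,u_{n_1+n_2}]$: we get $\wt\sft_j(\sigma,u_{n_1+n_2})\approx\psi_j(\mu_\theta(\gamma_1\gamma_2))$.
\item Along $\gamma_1^{-1}\varphi^{u_{n_2}}\sigma$ for time $u_{n_1+n_2}-u_{n_2}$: its basepoint lies in $Q_j$, and $\gamma_2^{-1}$ applied to its endpoint lies in $Q_j$. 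Using $\Gamma$-invariance of $\wt\sft_j$, this gives $\wt\sft_j(\varphi^{u_{n_2}}\sigma,u_{n_1+n_2}-u_{n_2})\approx\psi_j(\mu_\theta(\gamma_2))$.
\end{itemize}
The cocycle identity for $\wt\sft_j$ then combines these three estimates, and we can take $B_j=3C_{Q_j}$.

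\textbf{Main obstacle.} The delicate point is ensuring the hypotheses of item~(5) hold exactly. One needs nonnegative times and basepoints in a fixed compact set, with the second one translated back correctly by the group element. One also needs item~(5) to be applicable to a geodesic that is merely near $Q$ rather than in it, which is handled by enlarging $Q$ to $Q_j$. Beyond that, measurability and the ``almost every'' qualifier only require that $T_n$ and $\msf c_n$ be defined. That holds on the $\frakm^0_\vv$-full set given by Poincaré recurrence, using finiteness from \Cref{lemma: finiteness and ergodicity}.
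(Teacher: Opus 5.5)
Your argument is essentially the paper's proof. It uses the same compact set $Q$, the same lifted geodesic $\sigma$ with times at which $\wt\sft_0(\sigma,\cdot)$ equals $T_k(v)$, and the same three applications of item~(5) combined through the cocycle identity and $\Gamma$-invariance of $\wt\sft_j$, giving $B_j=3C_j$. The enlargement to $Q_j$ is unnecessary, since $\msf c_{n_2}(v)^{-1}\sigma(u_{n_2})\in Q$ exactly, as you showed in Step~1.

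The one point to tighten is the choice of times. An arbitrary $u_n$ obtained from surjectivity onto flow lines need not satisfy $0\leqslant u_{n_2}\leqslant u_{n_1+n_2}$, and item~(5) requires a nonnegative time for the third triple. Take instead $u_n=\inf\{s\geqslant 0:\wt\sft_0(\sigma,s)=T_n(v)\}$. This is well defined because $\wt\sft_0(\sigma,0)=0$ and $\wt\sft_0(\sigma,s)\to+\infty$ as $s\to+\infty$, and it is strictly increasing in $n$ by the intermediate value theorem. This is exactly what the paper does, and it also makes your absorption of finitely many bounded $\gamma$ unnecessary.
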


\begin{proof}
    Fix $j=0,1,\dots,m$ and consider the set \[\Qc = \{\sigma(0)\in X_{\GM}:\wt\Psi_0(\sigma)\in \wh K\}\subset X_{\GM}~,\]
    which is compact by \Cref{theorem: the flow space for relatively Anosov}~(4) and the properness of $X_{\GM}$. Let $\Bc$ be the set of triples $(\sig,s,\gamma)$ with  $\sig\in \Gc(X_{\GM}), s\geqslant 0$ and $\gamma\in \Gamma$ satisfying $\sigma(0)\in \Qc$ and $\gamma^{-1}\sigma(s)\in \Qc$.
    Then \Cref{theorem: the flow space for relatively Anosov}~(5) gives us a constant $C_j\geqslant 0$ such that \begin{equation}\label{eq:compt_0-psi}
        \abs{\wt \sft_j(\sigma,s) -\psi_j(\mu_\theta(\gamma))}\leqslant C_j \quad \text{ for every } (\sig,s,\gamma)\in \Bc.
    \end{equation}

    Let $v\in K$ be a point such that $\Phi^n(v)$ is well-defined for each $n$, and let $\wh v\in \wh K$ be the lift of $v$. We fix a geodesic $\sigma\in \Gc(X_{\GM})$ such that $\wt\Psi_0(\sigma)=\wh v$ and construct times $0< r_1< r_2 < \cdots $ such that 
    \begin{equation}\label{eq:goodtimes}
        \wt \sft_0(\sig,r_k)=T_k(v) \quad \text{ for all } k\geqslant 1.
    \end{equation}
    
    Since $s\mapsto\widetilde{\mathsf t}_0(\sigma,s)$ is continuous, $\widetilde{\mathsf t}_0(\sigma,0)=0$, and $\widetilde{\mathsf t}_0(\sigma,s)\to+\infty$ as $s\to+\infty$ by \Cref{theorem: the flow space for relatively Anosov}~(3), the numbers \[r_k = \inf\{s\ge0:\widetilde{\mathsf t}_0(\sigma,s)=T_k(v)\}\]
    are well-defined. Moreover, since  $T_k(v)<T_{k+1}(v)$ for each $k$, every path from level $0$ to level $T_{k+1}(v)$ crosses level $T_k(v)$ first, and hence the intermediate value theorem gives us $r_k< r_{k+1}$.
    
    \Cref{eq:goodtimes} and \Cref{theorem: the flow space for relatively Anosov}~(1) also imply that    
    \[\wt\Psi_0(\varphi^{r_k}(\sig))=\phi^{T_k(v)}(\wh v)\in \msf c_k(v)\wh K \quad \text{ for all } k\geqslant 1~.\]
    Combining this with the cocycle property of $\msf c$, for $n_1,n_2\geqslant 1$ we have that the triplets
    \[(\sig,r_{n_2},\msf c_{n_2}(v)), \ \ (\sig,r_{n_1+n_2},\msf c_{n_1+n_2}(v)), \ \ (\msf c_{n_2}(v)^{-1}\varphi^{r_{n_2}}(\sig),r_{n_1+n_2}-r_{n_2},\msf c_{n_1}(\Phi^{n_2}(v)))\]
    all belong to $\Bc$. Therefore, the cocycle property for $\wt \sft_0$ combined with \Cref{eq:compt_0-psi} yield
\[\abs{\p_j(\mu_\theta(\msf c_{n_1+n_2}(v)))-\p_j(\mu_\theta(\msf c_{n_1}(\Phi^{n_2}(v))))-\p_j(\mu_\theta(\msf c_{n_2}(v)))} \leqslant 3C_j=B_j~,\]
as desired. The moreover statement then follows from \Cref{eq:compt_0-psi} and \Cref{eq:goodtimes}.
\end{proof}

In view of the above lemma, Kac's lemma, and \Cref{lemma: comparison positive linear forms}, for each $j=0,\dots,m$ the functions $v \mapsto \p_j(\mu_\theta(\msf c_n(v)))$ form a $\frakm^0_{\vv}$-integrable almost cocycle for all $\vv\in \Rb^m$. Therefore, Kingman's subadditive ergodic theorem applies, and for $j=0,\dots,m$ and $\vv\in \Rb^m$, the limit
\[\ell^j_{\vv}=\lim_{n\to \infty}\frac{1}{n}\int_K{\psi_j(\mu_\theta(\msf c_n(v)))} \de \frakm^0_{\vv}\]
is well-defined and positive. 

\begin{proof}[Proof of \Cref{prop:taucontinuousatt0}]
First, we prove that each function $\vv\mapsto \ell_{\vv}^j$ is continuous at $\vv^0$, similar to the proof of  \cite[Theorem~6.1]{CMGR}.

By \Cref{lem:almostadditive} and $\Phi$-invariance of $\frakm_\vv^0|_{K}$, the sequence \[a_n=\int_K \psi_j(\mu_\theta(\msf c_n(v))) \de \frakm^0_{\vv}\]
satisfies
\[|a_{m+n}-a_m-a_n|\leqslant C_j\frakm_\vv^0(K) \quad \text{ for }m,n\in \N.~\]
Hence, after applying Fekete's lemma to the subadditive sequences $n\mapsto a_{n}+C_j\frakm_\vv^0(K)$ and $n\mapsto -a_n+C_j\frakm_\vv^0(K)$ we have
\begin{equation}\label{eq.L_B}
        \abs{\ell_{\vv}^j-\frac{1}{n}\int_K \psi_j(\mu_\theta(\msf c_n(v))) \de \frakm^0_{\vv}}\leqslant \frac{B_j\frakm^0_{\vv}(K)}{n}.
    \end{equation}
Then \Cref{prop:m_tctnous+finite} and \Cref{lemma: choice of K} imply that $\vv\mapsto \frakm^0_{\vv}(K)$ is continuous, and hence locally bounded, at $\vv^0$. Therefore, the continuity of $\vv \mapsto \ell_{\vv}^j$ at $\vv^0$ follows from the continuity of $\p_j \circ \mu_\theta$, \Cref{lemma: continuous almost everywhere}, and \Cref{prop:m_tctnous+finite}.

To end the proof of the proposition, for $\vv\in \Rb^m$ we claim that $\tau_{\vv}^j$ satisfies \Cref{eq:deftau_t} for $\frakm_{\vv}^0$-almost every $v\in K$. Indeed, by \Cref{lemma: finiteness and ergodicity} and \Cref{lemma: mixing implies ergodic first return}, the first return map $(K,\Phi)$ is $\frakm_\vv^0$-ergodic. Thus $\frakm_{\vv}^0$-almost every $v\in K$ satisfies
\begin{equation}\label{eq:L_t(v)}
\frac{\psi_0(\mu_\theta(\msf c_n(v)))}{n}\to \ell_{\vv}^0 \quad \text{ and } \quad \frac{\psi_j(\mu_\theta(\msf c_n(v)))}{n}\to \ell_{\vv}^j \quad \text{ as }n\to \infty.
\end{equation}
On the other hand, from the proof of \Cref{lem:almostadditive} we can find a constant $C>0$ such that for all $n\geqslant 1$ and $\frakm_{\vv}^0$-almost every $v\in K$ with lift $\wh v\in \wh K$ and $\sig\in \Gc(X_{\GM})$ with $\wt\Psi_0(\sig)=\wh v$, there exists some $r_n$ (depending on $\sig$) such that
\begin{equation*}
    \abs{\wt \sft_0(\sig,r_n)-\p_0(\mu_\theta(\msf c_n(v)))}\leqslant C \quad \text{ and }\quad \abs{\wt \sft_j(\sig,r_n)-\p_j(\mu_\theta(\msf c_n(v)))}\leqslant C.
\end{equation*}
By \Cref{theorem: the flow space for relatively Anosov}~(3), we have that $r_n\to \infty$ as $n\to \infty$, implying that 
\[\frac{\ell_{\vv}^j}{\ell_{\vv}^0}=\lim_{n\to \infty}{\frac{\p_j(\mu_\theta(\msf c_n(v)))}{\p_0(\mu_\theta(\msf c_n(v)))}}=\lim_{n\to \infty}\frac{\wt\sft_j(\sig,r_n)/r_n}{\wt\sft_0(\sig,r_n)/r_n}~.\]
Finally, by the definition of the inverse almost cocycles $\wh \sft_0$ and $\wh\sft_j$ and \Cref{lem:deftau}, we can find sequences $(u_n)_n$ and $(v_n)_n$ converging to infinity as $n\to \infty$, and such that 
\[\lim_{n\to \infty}\frac{\wt\sft_j(\sig,r_n)/r_n}{\wt\sft_0(\sig,r_n)/r_n}=\lim_{n\to \infty}\frac{\wh\sft_0(\wh v,u_n)/u_n}{\wh\sft_j(\wh v,v_n)/v_n}=\tau_\vv~.\]
We conclude that $\tau_{\vv}^j=\ell^j_{\vv}/\ell_{\vv}^0$ for $\frakm_{\vv}^0$-almost every $v\in K$, which by \Cref{eq:L_t(v)} completes the proof of the proposition.
\end{proof}

\subsection{Strict concavity of growth indicator}\label{subsection: growth} We are ready to prove \Cref{corollaryalpha: strict concavity of growth indicator}. Let $\Gamma <\msf G$ be a Zariski dense relatively $\theta$-Anosov group, and consider the set \[\partial \Dc_\Gamma^\theta=\{\psi\in \fraka_\theta^\ast:\psi \text{ is tangent to }\p_\Gamma^\theta\}=\{\psi\in \fraka_\theta^\ast: \del_\psi(\Gamma)=1\}\subset \fraka_\theta^\ast~.\]

\begin{lemma}\label{lem:ManhattancoundaryC^1}
    $\partial \Dc_\Gamma^\theta$ is a strictly convex and $C^1$-submanifold of $\fraka_\theta^\ast$. 
\end{lemma}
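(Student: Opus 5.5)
We choose coordinates in which $\partial \Dc_\Gamma^\theta$ is the graph of a Manhattan parametrization, and then read off both properties from \Cref{theorem: manhattan regularity for relatively Anosov representations}. Fix a basis $\psi_0,\dots,\psi_m$ of $\fraka_\theta^\ast$ consisting of $(\Gamma,\theta)$-proper positive forms. Such a basis exists because the positive forms make up an open nonempty cone in $\fraka_\theta^\ast$: the limit cone $\Lc_\theta(\Gamma)$ sits in $\fraka_\theta^+$ and meets $\{0\}$ only at the origin. By \Cref{lemma: comparison positive linear forms}, positive forms are $(\Gamma,\theta)$-proper.

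\textbf{Step 1: $\partial\Dc_\Gamma^\theta$ is a graph.} Let $\psi\in\partial\Dc_\Gamma^\theta$ and write $\psi=\sum_{i\ge1}v_i\psi_i+s\psi_0$. Then $\delta_\psi(\Gamma)=1$. Because $\psi_0$ is positive, the critical exponent of $\sum_i v_i\psi_i+s'\psi_0$ is strictly monotone in $s'$, so $s=\Theta(\vv)$. Conversely, each $\psi_\vv$ lies in $\partial\Dc_\Gamma^\theta$, as already noted before \Cref{theorem: manhattan regularity for relatively Anosov representations}. So $\vv\mapsto\psi_\vv$ is a bijection $\R^m\to\partial\Dc_\Gamma^\theta$; it is the graph of $\Theta$ in these linear coordinates.

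\textbf{Step 2: $C^1$ regularity.} \Cref{theorem: manhattan regularity for relatively Anosov representations} makes $\Theta$ a $C^1$ function. The graph of a $C^1$ function is a $C^1$ hypersurface. Convexity follows from Hölder's inequality, which makes $\Theta$ convex. Geometrically, $\Dc_\Gamma^\theta$ is the epigraph side, which is convex.

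\textbf{Step 3: strict convexity.} This is the main obstacle. I would argue by contradiction. Suppose $\Theta$ is affine on a segment $\vv_0+t\vw$, $t\in[0,1]$. Convexity together with differentiability then forces the derivative $-\tau_{\vv}$ (the drift vector $(\tau^1_\vv,\dots,\tau^m_\vv)$ from \Cref{theorem: manhattan framework}) to be constant along the segment. Now use the duality of Quint and Sambarino. The ratio vector $\tau_\vv$ determines the direction $u_\vv\in\Lc_\theta(\Gamma)$ at which $\psi_\vv$ is tangent to $\psi^\theta_\Gamma$, namely the asymptotic direction of $\mu_\theta(\msf c_n(v))$. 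So two distinct tangent forms $\psi_{\vv_0}\neq\psi_{\vv_0+\vw}$ would be tangent at the same direction $u$. At $u$ they would both equal $\psi_\Gamma^\theta(u)$, and their difference would vanish on $u$.

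To get a contradiction, I would try Zariski density. By Benoist, the limit cone then has nonempty interior. Uniqueness of the Patterson--Sullivan measure for each $\psi$ in \Cref{theorem: PS existence uniqueness ergodicity}, together with the fact that $\nu_\psi$ is concentrated on the direction $u$, would then give $\nu_{\psi}=\nu_{\psi'}$ for the two forms. The conformality relations would then give $(\psi-\psi')(\beta^\theta_\xi(e,\gamma))=0$ for all $\gamma$ and $\nu$-a.e. $\xi$. Taking $\xi=\gamma^+$ for loxodromic $\gamma$ yields $(\psi-\psi')(\lambda_\theta(\gamma))=0$ for every Jordan projection $\lambda_\theta(\gamma)$. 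Benoist's theorem says these Jordan projections span $\fraka_\theta$ (they generate a dense subgroup), which contradicts $\psi\neq\psi'$. This step, identifying equal drifts with equal Patterson--Sullivan measures, is where I expect the difficulty. If it fails, I would instead invoke Quint's duality directly, which says that strict convexity of $\partial\Dc$ is equivalent to $C^1$-ness of $\psi^\theta_\Gamma$ on the interior of the cone; the paper notes that this $C^1$ property is available from \cite{canary2025patterson}.
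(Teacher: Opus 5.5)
Your Steps 1 and 2 match the paper. The paper takes the fundamental weights $\omega_\alpha$, $\alpha\in\theta$, as a proper basis, writes $\partial\Dc_\Gamma^\theta$ as the graph of $\Theta$, and reads off $C^1$-regularity from \Cref{theorem: manhattan regularity for relatively Anosov representations}.

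Step 3 has a genuine gap. Reducing to a common tangency direction is fine, and you do not need the drifts for it. If the midpoint of a segment in $\partial\Dc_\Gamma^\theta$ satisfies $\psi_{1/2}(u)=\psi_\Gamma^\theta(u)$, then both endpoints are tangent at $u$ as well. The failure is the next step: that two forms $\psi\neq\psi'$ tangent at the same $u$ must have $\nu_\psi=\nu_{\psi'}$. \Cref{theorem: PS existence uniqueness ergodicity} gives uniqueness of the conformal measure for one fixed form. It says nothing about comparing the measures of two different forms. The fact that $\nu_\psi$-typical rays have Cartan projections asymptotic to $u$ only gives $(\psi-\psi')(\mu_\theta(\gamma))=o(\norm{\mu_\theta(\gamma)})$ along those rays. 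By the shadow lemma, this only shows that $\nu_\psi$ and $\nu_{\psi'}$ have the same local exponents. It is far from the boundedness you would need to make $\nu_\psi$ $(\Gamma,\psi')$-conformal, or even absolutely continuous with respect to $\nu_{\psi'}$. Ruling out this situation is essentially the statement you are trying to prove. (A smaller point: the conformality identity holds only $\nu$-a.e., so substituting $\xi=\gamma^+$ needs continuity of the Busemann cocycle plus full support. That part is fixable.)

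The paper does not try to derive strict convexity from the drifts. It takes $\psi_0\neq\psi_1$ in $\partial\Dc_\Gamma^\theta$ and notes that $\delta_{\psi_t}(\Gamma)\leqslant 1$ for $\psi_t=(1-t)\psi_0+t\psi_1$ by convexity. It then cites \cite[Corollary~1.10]{canary2025patterson}, which gives $\delta_{\psi_t}(\Gamma)<1$ for all $t\in(0,1)$. That statement covers every pair of forms, regardless of where they are tangent.

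Your fallback via Quint duality rests on the same results of \cite{canary2025patterson}, and as stated it is incomplete. $C^1$-regularity of $\psi_\Gamma^\theta$ on the interior of $\Lc_\theta(\Gamma)$ only excludes segments whose common tangency direction is interior. Suppose some $\psi\in\partial\Dc_\Gamma^\theta$ were tangent at $u\in\partial\Lc_\theta(\Gamma)$, and take a nonzero form $\chi\geqslant 0$ on $\Lc_\theta(\Gamma)$ with $\chi(u)=0$. Then every $\psi+t\chi$ with $t\geqslant 0$ is again tangent, which gives a segment in $\partial\Dc_\Gamma^\theta$. So ruling out tangency at boundary directions would be an extra input. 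The clean fix is to cite \cite[Corollary~1.10]{canary2025patterson} directly, as the paper does.
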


\begin{proof}
    We first show strict convexity of $\partial \Dc_\Gamma^\theta$. For any $\psi_0,\psi_1\in \partial \Dc_\Gamma^\theta$, we set $\psi_t=(1-t)\psi_0+t\psi_1$ for $t\in [0,1]$. By the convexity of the critical exponent, we have $\delta_{\psi_t}(\Gamma) \leqslant 1$. Then \cite[Corollary~1.10]{canary2025patterson} implies that $\delta_{\psi_t}(\Gamma) < 1$ for any $t\in (0,1)$ when $\psi_0 \ne \psi_1$. This implies that $\partial \Dc_\Gamma^\theta$ is strictly convex.

    Next we show $C^1$-regularity of $\partial \Dc_\Gamma^\theta$. Let $\omega_\al$ be the fundamental weight associated to $\al\in \theta$. Then $\{\omega_\al:\al\in \theta\}$ is a basis of $\fraka_\theta^\ast$, and moreover each $\omega_\al$ is $(\Gamma,\theta)$-proper. Let $\theta=\{\al_0,\al_1,\dots,\al_m\}$ and denote $\p_j=\om_{\al_j}$ for $j=0,\dots,m$. Let $\Theta:\Rb^m\ra \Rb$ be the Manhattan parametrization of $(\msf d_{\p_0},\dots,\msf d_{\p_m})$, and note that 
    \[\partial \Dc_\Gamma^\theta=\left\{\sum_{j=1}^m{v_j\p_j}+\Theta(\vv)\p_0: (v_1,\dots,v_m)\in \Rb^m\right\}~.\]
    Then the $C^1$-regularity of $\partial \Dc_\Gamma^\theta$ follows from \Cref{theorem: manhattan regularity for relatively Anosov representations}.
\end{proof}

\begin{proof}[Proof of \Cref{corollaryalpha: strict concavity of growth indicator}]
    The limit cone $\Lc_\theta(\Gamma)\subset \fraka_\theta$ has non-empty interior since $\Gamma$ is Zariski dense. Let $\psi^\thet_\Gamma:\Lc_\theta(\Gamma)\ra [-\infty,\infty]$ be the $\theta$-growth indicator function. By \cite[Theorem~3.3]{kim2025properly}, $\p_\Gamma^\theta$ satisfies:
    \begin{itemize}
        \item it is finite and nonnegative on $\Lc_\theta(\Gamma)$, and positive on the interior of $\Lc_\theta(\Gamma)$;
        \item it is constant and equal to $-\infty$ outside of $\Lc_\theta(\Gamma)$;
        \item it is homogeneous, upper semicontinuous, and concave.
    \end{itemize}
    Since $\partial\Dc_\Gamma^\theta$ is the boundary of the set $\partial \Dc_\Gamma^\theta=\{\psi\in \fraka_\theta^\ast: \psi\geqslant \p_\Gamma^\theta\}$, we can apply the results from \cite[Section~4.1]{quint2003indicateur} (see also \cite[Lemma~4.8]{sambarino2014hyperconvex}). In particular, $\partial \Dc_\Gamma^\theta$ being $C^1$ implies that $\p_\Gamma^\theta$ is strictly concave on the interior of $\Lc_\theta(\Gamma)$.

    It remains to show that $\psi^\thet_\Gamma$ is $C^1$-regular. For any vector $\vv$ in the interior of $\Lc_\theta(\Gamma)$, we consider the set of supporting linear forms of $\partial \Dc_\Gamma^\theta$ at $\vv$, namely \[\partial \Dc_{\vv} = \{\psi\in\partial \Dc_\Gamma^\theta:\psi(\vv)=\psi^\thet_\Gamma(\vv)\}~.\] Since $\psi^\thet_\Gamma$ is concave, $\partial \Dc_{\vv}$ is non-empty. We claim that $\partial \Dc_{\vv}$ is a singleton. Otherwise there exist two distinct linear forms $\psi_0,\psi_1\in \partial \Dc_\vv$, and hence for every $s\in[0,1]$, the form $\psi_s=(1-s)\psi_0+s\psi_1$ satisfies $\psi_s\geqslant \psi^\thet_\Gamma$ and $\psi_s(\vv)=\psi^\thet_\Gamma(\vv)$. Hence $\partial \Dc_\Gamma^\theta$ contains a line segment, which contradicts the strict convexity of $\partial \Dc_\Gamma^\theta$. Therefore $\partial \Dc_{\vv}$ is a singleton, which confirms the claim. Finally, $\psi^\thet_\Gamma$ is $C^1$-regular by the fact that any concave function on an open convex set is differentiable at a point if and only if its superdifferential at that point is a singleton.
\end{proof}

\bibliographystyle{alpha}
\bibliography{reference.bib}

\end{document}